\newtheorem{theo}{Theorem}[section]
\newtheorem{cor}[theo]{Corollary}
\newtheorem{lem}[theo]{Lemma}
\newtheorem{prop}[theo]{Proposition}
\newtheorem{defn}[theo]{Definition}
\newtheorem{rmk}[theo]{Remark}
\newtheorem{ex}[theo]{Example}
\newcommand{\Z}{\mathbb{Z}}
\newcommand{\N}{\mathbb{N}}
\newcommand{\s}{\sigma}
\newcommand{\LA}{L_\mathbf{\Lambda}}
\newcommand{\LAA}{L_\mathbf{\Lambda'}}
\newcommand{\LAG}{L_\mathbf{\Gamma}}
\newcommand{\F}{\mathcal{F}}
\newcommand{\Pp}{\mathcal{P}}
\title{Two-sided shift spaces over infinite alphabets}
\author{
\small{Daniel Gon\c{c}alves}\\
\footnotesize{UFSC -- Department of Mathematics}\\
\footnotesize{88040-900 Florian\'{o}polis - SC, Brazil}\\
\footnotesize{\texttt{daemig@gmail.com}}
\and
\small{Marcelo Sobottka}\\
\footnotesize{UFSC -- Department of Mathematics}\\
\footnotesize{88040-900 Florian\'{o}polis - SC, Brazil}\\
\footnotesize{\texttt{sobottka@mtm.ufsc.br}}
\and
\small{Charles Starling}\\
\footnotesize{University of Ottawa}\\
\footnotesize{Dept. of Mathematics and Statistics}\\
\footnotesize{585 King Edward, Ottawa, ON}\\
\footnotesize{K1N 6N5}\\
\footnotesize{\texttt{cstar050@uottawa.ca}}
}
\date{}
\begin{document}

\maketitle

\begin{abstract} Ott, Tomforde, and Willis proposed a useful compactification for one-sided shifts over infinite alphabets. Building from their idea we develop a notion of two-sided shift spaces over infinite alphabets, with an eye towards generalizing a result of Kitchens. As with the one-sided shifts over infinite alphabets our shift spaces are compact Hausdorff spaces but, in contrast to the one-sided setting, our shift map is continuous everywhere. We show that many of the classical results from symbolic dynamics are still true for our two-sided shift spaces. In particular, while for one-sided shifts the problem about whether or not any $M$-step shift is conjugate to an edge shift space is open, for two-sided shifts we can give a positive answer for this question.
\end{abstract}

\bigskip
\hrule
\noindent
{\footnotesize\em This is a pre-copy-editing, author-produced PDF of an article accepted for publication in Journal of the Australian Mathematical Society, following peer review.}
\hrule
\bigskip

% ---------------------------- INTRODUCTION ------------------------------------

\section{Introduction}

This paper is a continuation of a line of study of symbolic dynamics over infinite alphabets initiated in \cite{OTW14} and further developed in \cite{GR, GR1, GoncalvesSobottkaStarling2015,  GSS1, GS1}. In classical symbolic dynamics one starts with a finite alphabet $A$ and constructs the infinite products $A^\Z$ and $A^\N$. These are compact spaces when given the product topology, and the map $\sigma$ which shifts all the entries of the sequences one to the left, is a continuous map. In the case of $A^\Z$ this map is a homeomorphism. A {\em shift space} (or a {\em subshift}) is then a closed subspace of $A^\Z$ or $A^\N$ which is invariant under $\s$. These dynamical systems are fundamental and well-studied -- see \cite{LindMarcus} for an excellent reference.

Given the importance of the above situation, it is natural to wonder what happens if we do not insist that the alphabet $A$ is finite. There has been much research into shift spaces over infinite alphabets, most of it on the case of countable-state shifts of finite type \cite{FF02, FiebigFiebig2005, kitchens1997}. Exel and Laca \cite{EL99} define a C*-algebra $\mathcal{O}_B$ from a given countable $\{0,1\}$-matrix $B$ which is thought of as the incidence matrix of an infinite graph, and propose that the spectrum of a certain commutative C*-subalgebra of $\mathcal{O}_B$ is a good candidate for the Markov shift associated to the graph (see also \cite{Wag88} for a C*-algebra constructed from such a matrix). In \cite{BBG06, BBG07}, Boyle, Buzzi and G\'omez study almost isomorphism for countable-state Markov shifts. Thermodynamical formalism for such shifts has also been well-developed, see for example \cite{CS09, GS98, IY12, MU01, Sa99, Pe14}. Seminal work of Young \cite{Yo98} defines Markov towers and uses them to study hyperbolic systems; see also \cite{Yo99} and see \cite{SV04} for a survey.

A difficulty one encounters with infinite alphabet shifts is that if $A$ is infinite then the infinite products $A^\N$ and $A^\Z$ are not compact, and indeed not even locally compact. This difficulty is dealt with in various ways in the papers listed above based on the application at hand. Our interest is in a compactification of $A^\N$ put forth by Ott, Tomforde, and Willis in \cite{OTW14} -- this compactification can be identified with the set of all finite sequences (including an empty sequence) and infinite sequences over $A$, and was motivated by spaces arising from countable graphs \cite{We14}. From this they say that subshifts are closed (hence compact) subsets $\Lambda\subset \Sigma^\N_A$ invariant under the shift and with an additional condition which guarantees that the set of infinite sequences in $\Lambda$ is dense in $\Lambda$. As noted in the introduction of \cite{OTW14}, compactness of their subshifts is an important feature that allows them to establish results analogous to classical results about finite alphabet subshifts. To us, the space constructed in \cite{OTW14} seems so natural that it deserves study in its own right. Further to this, in \cite{GoncalvesSobottkaStarling2015} we defined sliding block codes on such spaces and studied to what extent such maps characterize shift-commuting continuous maps between Ott-Tomforde-Willis shifts, and there compactness played a central role. See also \cite{GR}, where a one-sided $(M+1)$-shift which is not conjugate (in the sense of \cite{OTW14}) to a one-sided $M$-step shift is constructed.

For the work at hand, our main motivation was to generalize the paper of Kitchens \cite{kitchens} to the Ott-Tomforde-Willis infinite alphabet case. In fact, our work on generalizing Kitchens' work grew to lead to not only the present work but also \cite{GSS1} and the previously mentioned \cite{GoncalvesSobottkaStarling2015}. We briefly describe Kitchens' result. If $A$ is a \underline{finite} group, then the two-sided full shift $A^\Z$ is a compact zero-dimensional topological group with the operation of pointwise multiplication, and the shift map is an expansive group automorphism. Kitchens proves a converse to this: \cite[Theorem 1.i]{kitchens} says that if $X$ is a compact zero-dimensional topological group and $\phi$ is an expansive group automorphism of $X$, then $(X,\phi)$ is topologically conjugate to a {\em subshift} over a finite group via a group isomorphism. He also proves \cite[Theorem 1.ii]{kitchens} that given the same assumptions, $(X,\phi)$ is topologically conjugate to $(A^\Z, \sigma)\times (F, \tau)$, where $F$ is a finite group and $\tau$ is an automorphism. That the topology is compact and zero dimensional, and that the shift is bijective, are crucial in the proofs.

Hence, our goal is to define and study a two-sided compact analogue of the Ott-Tomforde-Willis construction. Our first clue of how to accomplish this comes from the classical situation, where one can always obtain a two-sided shift from a one-sided shift via the inverse limit construction. Applying this idea to Ott-Tomforde-Willis shifts yields a set which can be identified with the set, which we denote $\Sigma^\Z_A$, of all infinite sequences in $A$, all left-infinite sequences in $A$ ending at an arbitrary integer index, and an empty sequence. This is the set which underlies the topological space we define, see Definition \ref{2sideddef}. Since the shift map is not continuous for Ott-Tomforde-Willis shifts, we do not take the topology to be the product topology (as is usual for topological inverse limits) and instead take a suitably modified topology generated by cylinders corresponding to left-infinite words, see Lemma \ref{topologylemma}. This topology makes $\Sigma^\Z_A$ zero-dimensional, compact, and Hausdorff, and in this topology the shift map is a homeomorphism. Using this topology we were successfully able to obtain results akin to Kitchens' \cite[Theorem 1.ii]{kitchens} in \cite[Theorem 5.18]{GSS1} (although we note that we were only able to generalize \cite[Theorem 1.i]{kitchens} for the one-sided case, see \cite[Propositions 3.2 and 3.5]{GSS1}).

In this paper, we focus on the fundamental properties of $\Sigma^\Z_A$ and its subshifts. Namely, we prove that any shift space is determined by a set of forbidden words (Proposition \ref{shiftforbiddenwords}), and determine to what extent the shift-commuting continuous maps between shift spaces are given by sliding block codes (Theorem \ref{continuoussliding block code}). We also show that our shift spaces are conjugate to their higher-block presentations (Proposition \ref{M_Higher_Block_Code}) which implies that every $M$-step shift is conjugate to the edge shift on some graph (Proposition \ref{edge_shift_conjugacy}). Finally, we show that one can go back and forth from one-sided shifts to two-sided shifts via projection and (set-theoretical) inverse limit (Propositions \ref{projection} and \ref{inverse_limit2}), though these processes are not in general inverses of each other.

The article is organized as follows. In Section \ref{SEC:2-sided_shift_spaces} we recall background and definitions from \cite{OTW14} and define our notion of the two-sided full shift over an infinite alphabet, including a careful description of the topology. We then define our notion of a shift space in analogy to that in \cite{OTW14} and reproduce fundamental results from finite alphabet symbolic dynamics, adapted for the infinite-alphabet situation.
%In the sections that follow, we attempt to reproduce fundamental results from finite alphabet symbolic dynamics, as much as is possible.
In Section \ref{SEC:SBC} we discuss sliding block codes between our shift spaces and discuss their higher-block presentations. In Section \ref{SEC:2-sided_1-sided} we present the relationship between the two-sided shift spaces defined here and Ott-Tomforde-Willis one-sided shift spaces. In Section \ref{SEC:final} we provide a final discussion.

\section{Two-sided shift spaces over infinite alphabets}\label{SEC:2-sided_shift_spaces}

\subsection{The Ott-Tomforde-Willis one-sided full shift}\label{SEC:OTW}
We briefly recall the construction of \cite{OTW14} of the one-sided full shift over an infinite alphabet. This topology is inspired by spaces associated to infinite graphs \cite{We14}, and one sees similar topologies associated to semilattices arising from C*-algebras \cite{BdCM15,Ex08, Pa02}.

Let $A$ be a countably infinite alphabet, and define a new symbol $\o$ not in $A$. We call  $\o$ the {\em empty letter}, and let $\tilde A:=A\cup\{\o\}$. Let
\[
\Sigma_A^\N = \{(x_i)_{i\in \N}\in \tilde A^\N: x_i = \o \text{ implies }x_{i+1}= \o \},
\]
\[
\Sigma_A^{\N\ \text{fin}} = A^\N,\hspace{1cm}\Sigma_A^{\N\ \text{inf}} = \Sigma_A^\N\setminus\Sigma_A^{\N\ \text{inf}}.
\]
The set $\Sigma_A^{\N\ \text{fin}}$ is identified with the finite sequences in $A$ via the identification
\[
(x_1x_2\dots x_k\o\o\o\dots)\equiv (x_1x_2\ldots x_k)
\]
and the sequence of all $\o$s is denoted $\O$ and called the {\em empty sequence}. For a finite set $F\subset A$ and $x\in \Sigma_A^{\N\ \text{fin}}$, let
\[
Z(x,F):=\{y\in\Sigma^\N_A:\ y_i=x_i\ \forall 1\leq i\leq k,\ y_{k+1}\notin F\}.
\]
Sets of this form are called {\em generalized cylinders}. Endowed with the topology generated by the generalized cylinders, $\Sigma_A^\N$ is a compact totally disconnected metrizable Hausdorff space, and in this topology the generalized cylinders are compact and open \cite[Proposition 2.5, Theorem 2.15, Corollary 2.18]{OTW14}. We note that the shift map (the map which shifts every entry one to the left) is continuous everywhere except at $\O$ \cite[Proposition 2.23]{OTW14}.

%Let $\Sigma_A^\N$ denote the Ott-Tomforde-Willis one-sided shift (see \cite{Ott_et_Al2014}), that is, the set of all sequences over $\tilde A$ indexed by $\N$ and with the property that, for all $x\in\Sigma_A^\N$, if $x_i=\o$ then $x_{i+1}=\o$. As in the two-sided case we write $\Sigma_A^\N=\Sigma_A^{\N\ \text{inf}}\cup\Sigma_A^{\N\ \text{fin}}$, where $\Sigma_A^{\N\ \text{inf}}=A^\N$ and $\Sigma_A^{\N\ \text{fin}}=\Sigma_A^\N\setminus\Sigma_A^{\N\ \text{inf}}$. The set $\Sigma_A^\N$ is given the topology generated by the generalized cylinders $Z(x,F):=\{y\in\Sigma^\N_A:\ y_i=x_i\ \forall 1\leq i\leq k,\ y_{k+1}\notin F\}$ defined for any $x=(x_i)_{i\leq k}\in \Sigma_A^{\N\ \text{fin}}$ and any finite set $F\subset A$.

\subsection{Construction of the two-sided shift -- its topology and dynamics}
As mentioned in the introduction, we are interested in the two-sided analogue of the above. In the finite alphabet case, the two-sided full shift is the inverse limit of the one-sided full shift -- we take this as our starting point. We first write down the set we obtain from the inverse limit, and then describe the topology we endow this set with. After we do this, in Remark \ref{inverse_limit1} we discuss the relation with the inverse limit.

We take $A$ and $\tilde A$ as in Section \ref{SEC:OTW}. Consider the subsets $\Sigma_A^{\Z\ \text{inf}},\Sigma_A^{\Z\ \text{fin}}\subset\tilde A^\Z$ given by
$$\Sigma_A^{\Z\ \text{inf}}:=A^\Z:=\{(x_i)_{i\in\Z}:\ x_i\in A\ \forall i\in \Z\}$$ and $$\Sigma_A^{\Z\ \text{fin}}:=\{(x_i)_{i\in\Z}\in \tilde A^\Z: x_k = \o \text{ for some }k\in \Z\text{ and }x_i = \o \text{ implies } x_{i+1} = \o \text{ for all }i\in \Z\}.$$

\begin{defn}\label{2sideddef}
Let $A$ be a countably infinite set and let $\Sigma_A^{\Z\ \text{inf}}$ and $\Sigma_A^{\Z\ \text{fin}}$ be as above. The {\em two-sided full shift} over $A$ is the set $$\Sigma_A^\Z:=\Sigma_A^{\Z\ \text{inf}}\cup \Sigma_A^{\Z\ \text{fin}}.$$ Also, given $x\in\Sigma_A^\Z$, define the length of $x$ as $l(x):=\sup\{i:x_i\neq\o\}$.
\end{defn}

We will refer to the constant sequence $\O:=(\ldots\o\o\o \ldots)\in \Sigma_A^{\Z\ \text{fin}}$ as the {\em empty sequence} -- we note that $l(\O)=-\infty$. The elements of $\Sigma_A^{\Z\ \text{fin}}$ will be referred to as {\em finite sequences} and we will identify $\Sigma_A^{\Z\ \text{fin}}$ with the set $\{\O\}\cup\bigcup_{i\in \Z} \prod_{-\infty}^{i} A$ via the identification $$(\ldots x_ix_{i+1}x_{i+2}\ldots x_k\o\o\o\o\ldots)\equiv (\ldots x_ix_{i+1}x_{i+2}\ldots x_k).$$ Following this identification, we will use the notation $x=(x_i)_{i\leq k}$ to refer to a point in $\Sigma_A^{\Z\ \text{fin}}$ with length $-\infty<k < \infty$. If needed, we will separate the 0th entry from the first entry by a period; i.e., if $k >0$,
\[
(x_i)_{i\leq k} = (\dots x_{-2}x_{-1}x_0.x_1x_2\dots x_k)
\]

We now define a topology on $\Sigma_A^\Z$ which will have a basis of clopen sets.
First, we notice that the topology of generalized cylinders defined for Ott-Tomforde-Willis one-sided shifts (see \cite[Definition 2.8]{OTW14}) cannot be straightforwardly adapted to our case, since cylinders which are defined using only finitely many entries will not be closed sets. To deal with this we define generalized cylinder sets of $\Sigma_A^\Z$ as sets obtained by specifying infinitely many coordinates to the left. This definition is aimed towards making our space manageable while keeping its geometric origins intact.\footnote{This type of approach is not uncommon in the literature, see for example \cite[Definition 3.2]{Put10}.} We are more precise below.

\begin{defn} Let $x=(x_i)_{i\leq k}\in \Sigma_A^{\Z\ \text{fin}}$, $x\neq \O$, and let $F\subset A$ be a finite set. Define $$Z(x,F):=\{y\in\Sigma^\Z_A:\ y_i=x_i\ \forall i\leq k,\ y_{k+1}\notin F\}.$$
If $F$ is the empty set we will shorten $Z(x,F)$ to $Z(x)$.
\end{defn}

Let $B_{\Sigma^\Z_A}$ be the collection of all generalized cylinders $Z(x,F)$, together with sets that are complements of finite unions of generalized cylinders of the form $Z(x)$. We endow $\Sigma_A^\Z$ with the topology $\tau_{\Sigma^\Z_A}$ which is generated by $B_{\Sigma^\Z_A}$.

\begin{lem}\label{topologylemma} The topology $\tau_{\Sigma^\Z_A}$ is Hausdorff and $B_{\Sigma^\Z_A}$ is a clopen basis for it.
\end{lem}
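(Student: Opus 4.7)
The plan is to verify, in sequence, that (a) each element of $B_{\Sigma^\Z_A}$ is both open and closed, (b) $B_{\Sigma^\Z_A}$ satisfies the basis axioms (covering and pairwise intersection), and (c) the resulting topology is Hausdorff. For (a), cylinders $Z(x,F)$ are open by construction, and their complements admit the decomposition
\[
\Sigma_A^\Z\setminus Z(x,F) \;=\; \bigl(\Sigma_A^\Z\setminus Z(x)\bigr)\cup\bigcup_{a\in F} Z(x^a),
\]
where $x^a$ is the extension of $x=(x_i)_{i\le k}$ by $a\in A$ at position $k+1$; each term is a basis element, so $Z(x,F)$ is closed. The complement-type basis elements $\Sigma_A^\Z\setminus\bigcup_{j=1}^n Z(x^{(j)})$ are open by fiat and their complements $\bigcup_j Z(x^{(j)})$ are unions of basis elements, hence open.

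For (b), covering is immediate: every doubly infinite $y$ lies in any $Z((y_i)_{i\le k})$, every finite $y\neq\O$ lies in $Z((y_i)_{i\le l(y)})$, and $\O$ lies in $\Sigma_A^\Z\setminus Z(x)$ for any $x\neq\O$ (since $\O\notin Z(x)$). The pairwise intersection $Z(x,F)\cap Z(x',F')$ is straightforward: WLOG $l(x)\le l(x')$, in which case compatibility forces $Z(x',F')\subseteq Z(x,F)$ (take $B_3=Z(x',F')$), and equality of lengths further forces $x=x'$ with $B_3=Z(x,F\cup F')$. Two complement-type basis elements intersect in another complement-type set. For (c), given $x\neq y$ choose a coordinate $i_0$ of disagreement: if both $x_{i_0},y_{i_0}\in A$, then $Z((x_i)_{i\le i_0})$ and $Z((y_i)_{i\le i_0})$ separate them; if $y=\O$ and $x\neq\O$, use $Z((x_i)_{i\le l(x)})$ and its complement; in the mixed case where (say) $x_{i_0}\in A$ and $y_{i_0}=\o$, separate with $Z((x_i)_{i\le i_0})$ and $Z((y_i)_{i\le l(y)},\{x_{l(y)+1}\})$.

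The crux is the remaining mixed intersection $Z(x,F)\cap\bigl(\Sigma_A^\Z\setminus\bigcup_{j=1}^n Z(x^{(j)})\bigr)$. The $j$'s with $Z(x^{(j)})\cap Z(x,F)=\emptyset$ can be discarded, and for the rest one checks that $x^{(j)}$ must extend $x$ with $l(x^{(j)})>l(x)$ and $x^{(j)}_{l(x)+1}\notin F$. Given $y$ in this intersection, if $y$ is doubly infinite we take $B_3=Z((y_i)_{i\le K})$ with $K=\max_j l(x^{(j)})$; since every disagreement $y\neq x^{(j)}$ must occur within $(-\infty,K]$, it transfers to every $z\in B_3$. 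If instead $y$ is finite of length $K_0$ then $y_K=\o$ for $K>K_0$, so the previous choice is not a valid cylinder, and we take $B_3=Z((y_i)_{i\le K_0},G)$ with $G\subset A$ finite containing $F$ (relevant only if $K_0=l(x)$) and every letter $x^{(j)}_{K_0+1}$ for those $j$ with $l(x^{(j)})>K_0$; this finite $G$ guarantees both $B_3\subset Z(x,F)$ and $B_3\cap Z(x^{(j)})=\emptyset$ for every $j$. This finite-$y$ sub-case is the main obstacle: one cannot simply extend $y$ further to the right, so one is forced to use the $F$-parameter of the cylinder to exclude the problematic one-step extensions into the forbidden $x^{(j)}$'s.
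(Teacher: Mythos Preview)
Your proof is correct and follows essentially the same route as the paper's: both establish clopenness by writing $Z(x,F)$ as a finite Boolean combination of cylinders $Z(\cdot)$, both verify the basis axioms by case analysis on the three types of pairwise intersection, and both identify the mixed intersection $Z(x,F)\cap\bigl(\bigcup_j Z(x^{(j)})\bigr)^c$ as the nontrivial case, handling it by discarding irrelevant indices and then building a suitable cylinder around the given point using the $F$-parameter to block the one-step extensions into the surviving $x^{(j)}$'s. The only difference is organizational: you split the mixed case according to whether the point $y$ is doubly infinite or finite (using $K=\max_j l(x^{(j)})$ versus $K_0=l(y)$ as the cutoff), whereas the paper splits according to whether $l(z)$ dominates some $l(y^{i_\ell})$ and uses $l(y^{i_L})$ as the cutoff; both constructions achieve the same thing. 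One small presentational point: your claim that the non-discarded $x^{(j)}$ ``must extend $x$ with $l(x^{(j)})>l(x)$'' tacitly uses that the intersection is nonempty (otherwise a surviving $x^{(j)}$ with $l(x^{(j)})\le l(x)$ would force $Z(x,F)\subseteq Z(x^{(j)})$ and make the intersection empty), so it would be cleaner to state this after fixing $y$ in the intersection.
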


\begin{proof}
We first show that all elements of $B_{\Sigma^\Z_A}$ are clopen in $\tau_{\Sigma^\Z_A}$. Since $B_{\Sigma^\Z_A}$ generates $\tau_{\Sigma^\Z_A}$, generalized cylinders of the form $Z(x)$ and sets that are complements of finite unions of generalized cylinders of the form $Z(x)$ are clopen. Furthermore, generalized cylinders of the form $Z(x,F)$ are open sets by assumption. Thus, what remains to be shown is that sets of the form $Z(x,F)$ are closed. This easily follows by noticing that, for $x=(\ldots,x_{k-1},x_k)\in\Sigma_A^{\Z\ \text{fin}}$ and finite $F\subset A$, we have that
$$Z(x,F)=Z(\ldots,x_{k-1},x_k)\cap\left[\bigcup_{f\in F} Z(\ldots,x_{k-1},x_k,f)\right]^c,$$ a finite intersection of closed sets.\\

Let us show that $B_{\Sigma^\Z_A}$ is a basis for $\tau_{\Sigma^\Z_A}$. Note that given any $x\in\Sigma^\Z_A$, $x\neq \O$, and any $k\leq l(x)$, we have that $x$ belongs to $Z(\ldots,x_{k-1},x_k)$. On the other hand, $\O$ belongs to the sets that are complements of finite unions of generalized cylinders of the form $Z(x)$. Now, let $x,y,x^1,\ldots,x^p,y^1,\ldots,y^q\in\Sigma_A^{\Z\ \text{fin}}\setminus\{\O\}$ and let $F,G\subset A$ be finite sets. We have that,
\begin{equation}\label{basis1}\left[\bigcup_{i=1}^p Z(x^i)\right]^c\cap\left[\bigcup_{i=1}^q Z(y^i)\right]^c=\left[\bigcup_{i=1}^p Z(x^i)\cup\bigcup_{i=1}^q Z(y^i)\right]^c,\end{equation}
while, supposing that $l(x)\leq l(y)$, we have that
\begin{equation}\label{basis2}Z(x,F)\cap Z(y,G)=\left\{\begin{array}{lcl}
Z(y,G) &,&\text{if } l(x)<l(y),\ x_i=y_i\ \forall i\leq l(x)\text{ and } y_{l(x)+1}\notin F, \\
Z(x,F\cup G) &,&\text{if } l(x)=l(y)\text{ and } x_i=y_i,\ \forall i\leq l(x),\\
\emptyset &,&\text{otherwise.}
\end{array}\right.\end{equation}
Thus, $\left[\bigcup_{i=1}^p Z(x^i)\right]^c\cap\left[\bigcup_{i=1}^q Z(y^i)\right]^c\in B_{\Sigma^\Z_A}$, while $Z(x,F)\cap Z(y,G)\in B_{\Sigma^\Z_A}$ whenever it is not empty.

On the other hand,
\begin{equation}\label{basis3} Z(x,F)\cap \left[\bigcup_{i=1}^q Z(y^i)\right]^c = Z(x,F)\cap \bigcap_{i=1}^q  Z(y^i)^c,\end{equation}
and therefore, if the above set is non-empty, then $Z(x,F)\cap Z(y^i)^c \neq \emptyset$, for all $i$. Note that, if $l(y^i) \leq l(x)$ then $Z(x,F)\cap Z(y^i)^c$ is either $\emptyset$ or $Z(x,F)$. Furthermore, if $l(y^i)>l(x)$ then $Z(x,F)\cap Z(y^i)=\emptyset$ implies that $Z(x,F)\cap Z(y^i)^c = Z(x,F)$, while if $Z(x,F)\cap Z(y^i)\neq \emptyset$ then $y^i_j =x_j$ for all $j\leq l(x)$ and $y^i_{l(x)+1} \notin F$. Let $i_1,i_2,\ldots,i_r$ be all the indices such that $l(y^i)>l(x)$ and $Z(x,F)\cap Z(y^i)\neq \emptyset$, taken so that $l(y^{i_\ell})\leq l(y^{i_{\ell+1}})$. Then we can rewrite \eqref{basis3} as
\begin{equation}\label{basis3.1} Z(x,F)\cap \left[\bigcup_{i=1}^q Z(y^i)\right]^c = Z(x,F)\cap \bigcap_{\ell=1}^r  Z(y^{i_\ell})^c.\end{equation}

Now, let $z\in Z(x,F)\cap \left[\bigcup_{i=1}^r Z(y^{i_\ell})\right]^c$. If $l(y^{i_\ell})>l(z)$, for all $\ell=1\ldots r$, then let $H:=\left\{y^{i_\ell}_{l(z)+1}:\ l=1 \ldots r \right\}$ and notice that $z \in Z((z_i)_{i\leq _{l(z)}},F\cup H)\subseteq Z(x,F)\cap \left[\bigcup_{i=1}^r Z(y^{i_\ell})\right]^c$. If $l(y^{i_\ell})\leq l(z)$, for some $\ell$, then let $L:=\max\{\ell:\  l(y^{i_\ell})\leq l(z) \}$. We have that
$$ \begin{array}{lcl}
z_j=x_j, &\text{for}& j\leq l(x),\\\\
z_{l(x)+1}\notin F,\\\\
(z_j)_{j=l(x)+1,\ldots, l(y^{i_\ell})}\neq (y^{i_\ell}_j)_{j=l(x)+1,\ldots, l(y^{i_\ell})},&\text{for all}& \ell\leq L.
\end{array}
$$
Hence, setting $H:=\left\{y^{i_\ell}_{l(y^{i_L})+1}:\ \ell>L\right\}$, we have that
\begin{equation}\label{basis3.2}z\in Z(\ldots,z_{l(x)},\ldots, z_{l(y^{i_L})}, H)\subset Z(x,F)\cap \left[\bigcup_{i=1}^q Z(y^i)\right]^c.\end{equation}\\

Finally, if $x, y\in \Sigma^\Z_A$ are distinct points they differ at some entry, so we can clearly separate them with elements of $B_{\Sigma^\Z_A}$. Hence this topology is Hausdorff.

\end{proof}

Calculations in the proof of the above lemma imply the following, which is analogous to  \cite[Theorem 2.16]{OTW14}.
\begin{lem}\label{nhoodbasis}
Suppose that $x\in \Sigma_A^\Z$, $x\neq\O$. If $l(x) = \infty$ then the family of sets
\[
\{ Z(...,x_{n-1},x_{n}):\ n\in \Z\} \subset B_{\Sigma_A^\Z}
\]
is a neighborhood basis for $x$. If $l(x)< \infty$ then the family of sets
\[
\{Z(x, F):\ F\text{ is a finite subset of }A\}  \subset B_{\Sigma_A^\Z}
\]
is a neighborhood basis for $x$.
\end{lem}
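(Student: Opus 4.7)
The plan is to use Lemma \ref{topologylemma}: since $B_{\Sigma_A^\Z}$ is a basis for $\tau_{\Sigma_A^\Z}$, it suffices to show that whenever $V\in B_{\Sigma_A^\Z}$ satisfies $x\in V$, there is an element $W$ of the proposed family with $x\in W\subseteq V$. Each proposed set clearly contains $x$, so the real content is the second inclusion. The basic elements come in two shapes, namely generalized cylinders $Z(y,F)$ and complements $\left[\bigcup_{i=1}^{q} Z(y^i)\right]^c$, and I would handle each shape separately, subdividing according to whether $l(x)=\infty$ or $l(x)<\infty$.

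For $V=Z(y,F)$ with $y=(y_j)_{j\leq m}$, membership $x\in V$ yields $x_j=y_j$ for $j\leq m$ and $x_{m+1}\notin F$. When $l(x)=\infty$ I would take $W=Z(\ldots,x_{m},x_{m+1})$; any point of $W$ agrees with $x$ (and hence with $y$) through index $m$ and has its $(m{+}1)$st coordinate equal to $x_{m+1}\notin F$, giving $W\subseteq V$. When $l(x)=k<\infty$, the equality $x_j=y_j\in A$ forces $m\leq k$ (otherwise $x_m=\o\neq y_m$), and then a direct check shows $W=Z(x,F)$ satisfies $W\subseteq V$. For $V=\left[\bigcup_{i=1}^q Z(y^i)\right]^c$ set $m_i:=l(y^i)$; the condition $x\notin Z(y^i)$ means either $x$ and $y^i$ disagree at some index $j_i\leq\min(m_i,l(x))$, or $l(x)<m_i$ (so that $y^i_{l(x)+1}\in A$ while $x_{l(x)+1}=\o$). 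When $l(x)=\infty$ only the first alternative can occur, so $W=Z(\ldots,x_{n-1},x_n)$ with $n:=\max_i m_i$ will be contained in $V$. When $l(x)=k<\infty$, I would let $I$ be the set of indices $i$ for which $y^i$ agrees with $x$ throughout $(-\infty,k]$; these necessarily satisfy $m_i\geq k+1$ and $y^i_{k+1}\in A$, so $F':=\{y^i_{k+1}:i\in I\}$ is a finite subset of $A$, and $W:=Z(x,F')$ should work by construction.

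The main subtlety I anticipate is precisely this last subcase: one must carefully split the forbidden cylinders into those already avoided because $x$ differs from $y^i$ somewhere on the block $(-\infty,k]$, and those that require forbidding the symbol $y^i_{k+1}$ at position $k+1$. This is essentially the same bookkeeping performed around equation (\ref{basis3.2}) in the proof of Lemma \ref{topologylemma}, so I expect the argument to lift directly from there rather than demanding new ideas.
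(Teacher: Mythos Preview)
Your proposal is correct and is essentially the approach the paper takes: the paper does not give a separate proof but simply states that the result follows from ``calculations in the proof of the above lemma,'' and what you have written is precisely a careful unpacking of the case analysis around equations \eqref{basis2}--\eqref{basis3.2} in the proof of Lemma~\ref{topologylemma}. Your identification of the one nontrivial subcase (finite $l(x)$ against a complement of cylinders) and your handling of it via the set $F'$ match that bookkeeping exactly.
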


We note that while for one-sided shifts the topology of generalized cylinders
coincides  on $\Sigma_A^{\N\ \text{inf}}$ with the product topology, the same does not hold for two-sided shifts. In fact, our topology on $\Sigma_A^{\Z\ \text{inf}}$ is strictly finer than the product topology.

\begin{rmk}\label{inverse_limit1}
The set $\Sigma_A^\Z$ can be identified with the inverse-limit dynamical system of the Ott-Tomforde-Willis one-sided shift $\Sigma_A^\N$, $$(\Sigma_A^\N)^\s:=\{(\mathcal{X}_i)_{i\in\Z}:\ \forall i\in\Z\ \mathcal{X}_i\in\Sigma_A^\N\text{ and }\s(\mathcal{X}_i)=\mathcal{X}_{i+1}\},$$
via the map $p:(\Sigma_A^\N)^\s\to\Sigma_A^\Z$ given by $p\big((\mathcal{X}_i)_{i\in\Z}\big)=(x_i)_{i\in\Z}$, where $x_i$ is the first symbol of the sequence $\mathcal{X}_i$. In fact, $p$ is a bijection whose inverse is given by $p^{-1}\big((x_i)_{i\in\Z}\big)= (\mathcal{X}_i)_{i\in\Z}$, where $\mathcal{X}_i=(x_{i+j-1})_{j\in\N}$.

However, note that this is only a bijection and not a homeomorphism, since the natural topology considered for inverse-limit dynamical systems is the product topology defined from the topology of $\Sigma_A^\N$.
\end{rmk}

While we have seen that our topology is Hausdorff, the next proposition shows that $\Sigma_A^\Z$ is not metrizable.

\begin{prop}\label{non1countable} Given the topology above, $\Sigma_A^\Z$ is not first countable.
\end{prop}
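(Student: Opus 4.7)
The plan is to show that the empty sequence $\O$ has no countable neighborhood basis. Observe first that if $x\in\Sigma_A^{\Z\ \text{fin}}\setminus\{\O\}$ then $\O$ disagrees with $x$ at position $l(x)$, so $\O\notin Z(x,F)$ for any finite $F\subset A$. Consequently, every basic neighborhood of $\O$ drawn from $B_{\Sigma^\Z_A}$ has the form $\left[\bigcup_{i=1}^{p} Z(x^i)\right]^c$ for finitely many $x^1,\dots,x^p\in\Sigma_A^{\Z\ \text{fin}}\setminus\{\O\}$.

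Suppose for contradiction $\{V_n\}_{n\in\N}$ is a countable neighborhood basis at $\O$. Replacing each $V_n$ by a basic neighborhood of $\O$ contained in it, I may assume $V_n=\left[\bigcup_{i=1}^{p_n} Z(x^{n,i})\right]^c$, and the family $\mathcal{C}=\{x^{n,i}:n\in\N,\ 1\leq i\leq p_n\}$ is then countable. The heart of the argument is to produce a single $w\in\prod_{-\infty}^{0}A$ (that is, a word of length zero) satisfying $w\notin Z(x)$ for every $x\in\mathcal{C}$. For $x\in\mathcal{C}$ with $l(x)\geq 1$ this is automatic, since $w_{l(x)}=\o\neq x_{l(x)}\in A$. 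For $x\in\mathcal{C}$ with $l(x)\leq 0$, the intersection $Z(x)\cap\prod_{-\infty}^{0}A$ has at most $|A|^{-l(x)}=\aleph_0$ elements, because only the coordinates at positions $l(x)+1,\dots,0$ are free to vary. The union of these intersections over $x\in\mathcal{C}$ is therefore a countable union of at-most-countable sets, hence countable, whereas $\left|\prod_{-\infty}^{0}A\right|=|A|^{\aleph_0}=2^{\aleph_0}$. A suitable $w$ thus exists.

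Given such $w$, the set $U=[Z(w)]^c$ is a basic open neighborhood of $\O$ (since $w\neq \O$ implies $\O\notin Z(w)$). By construction, $w\in V_n$ for every $n$ because $w$ avoids each $Z(x^{n,i})$, while $w\in Z(w)$ gives $w\notin U$. Hence no $V_n$ is contained in $U$, contradicting the basis property, so $\Sigma_A^\Z$ is not first countable at $\O$. The main obstacle is the cardinality count in the middle step; this is where the assumption that $A$ is infinite is used essentially, since for a finite alphabet the target set $\prod_{-\infty}^{0}A$ would itself be countable and the construction of $w$ would fail.
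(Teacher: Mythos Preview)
Your proof is correct and reaches the same conclusion as the paper, but by a genuinely different route. The paper, after reducing to a countable family $\{Z(x^i)\}$ whose union is $\Sigma_A^\Z\setminus\{\O\}$ (using Hausdorffness to get $\bigcap_i U_i=\{\O\}$), builds an explicit \emph{infinite} sequence $y\in\Sigma_A^{\Z\ \text{inf}}$ by a diagonal argument: at each stage it selects a coordinate (tending to $-\infty$) at which $y$ disagrees with the next $x^i$. You instead work among \emph{finite} sequences of length zero and use a pure cardinality count: each $Z(x)\cap\prod_{-\infty}^0 A$ is at most countable, so a countable union of such sets cannot exhaust the uncountable set $\prod_{-\infty}^0 A$. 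Your approach avoids the bookkeeping of the diagonal construction and yields a witness $w$ nonconstructively; the paper's argument is more hands-on and shows concretely how to build the offending sequence. Both arrive at a point lying in every $V_n$ yet outside some basic neighborhood of $\O$.

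One small correction to your closing remark: the infiniteness of $A$ is \emph{not} what drives the cardinality step. For a finite alphabet with $|A|\geq 2$ one still has $\left|\prod_{-\infty}^0 A\right|=|A|^{\aleph_0}=2^{\aleph_0}$, which is uncountable, while each $Z(x)\cap\prod_{-\infty}^0 A$ would then be finite; so your argument would go through verbatim. The reason classical finite-alphabet full shifts are first countable is that they carry the product topology, not the topology defined in this paper. This does not affect the validity of your proof, since the paper assumes $A$ countably infinite throughout, but the attribution of where the hypothesis enters is misplaced.
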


\begin{proof}
Suppose $\{V_i\}$ is a countable basis at $\O$. Then for each $i$ there exists an $U_i:=\displaystyle \left[\bigcup_{1\leq j\leq n_i}Z(x^j)\right]^c$ such that $\O \in U_i \subseteq V_i$. Since $\Sigma_A^\Z$ is Hausdorff we have that $\displaystyle \bigcap_{i=1}^{\infty}U_i = \{\O\}$, so $\displaystyle \bigcup_{i=1}^{\infty}U_i^c = \Sigma_A^\Z \setminus \{\O\}$ and hence $\Sigma_A^\Z \setminus \{\O\}$ can be written as a countable union of cylinder sets, say $\displaystyle \Sigma_A^\Z \setminus \{\O\} = \bigcup_{i=1}^{\infty} Z(x^i)$.

Take $y_{l(x^1)}$ to be different from  $(x^1)_{l(x^1)}$, let $k_1 = \min \{l(x^1)-1, l(x^2)\}$, and take $y_{k_1} \neq (x^2)_{k_1}$. For each $n\geq 1$, suppose $y_{k_n}$ is defined, let $k_{n+1} = \min \{ k_n -1, l(x^{n+1})\}$ and take $y_{k_{n+1}}\neq (x^{n+1})_{k_{n+1}}$. For each $n\in \Z$ not of the form $k_i$ for some $i$, let $y_n$ to be equal to a constant letter different from $\o$. It follows that $y\in\Sigma_A^{\Z\ \text{inf}}$, but $y$ is not in any cylinder set $Z(x^i)$. We have a contradiction and so $\Sigma_A^\Z$ is not first countable.

\end{proof}

The above proof uses the point $\O$ to prove that first countability fails. We now show that if we remove this point, the space becomes first countable, but neither second countable nor separable.

\begin{prop}\label{non2countable} The space $\Sigma_A^\Z\setminus\{\O\}$ is first countable, but it is neither second countable nor separable.
\end{prop}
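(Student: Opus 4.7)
The plan is to read off first countability directly from Lemma~\ref{nhoodbasis}, and to prove non-separability (which will automatically imply failure of second countability) by exhibiting an uncountable pairwise disjoint family of nonempty open sets.

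For first countability, I take any $x \in \Sigma_A^\Z \setminus \{\O\}$ and invoke Lemma~\ref{nhoodbasis} directly. If $l(x) = \infty$, the neighborhood basis $\{Z(\ldots, x_{n-1}, x_n) : n \in \Z\}$ is indexed by $\Z$, hence countable. If $l(x) < \infty$, the neighborhood basis $\{Z(x,F) : F \subset A \text{ finite}\}$ is indexed by the finite subsets of the countably infinite set $A$, and this collection is countable as a countable union of copies of $A^n$. Either way $x$ has a countable neighborhood basis, so $\Sigma_A^\Z \setminus \{\O\}$ is first countable.

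For non-separability, I would associate to each left-infinite string $\xi \in \prod_{-\infty}^{0} A$ the finite sequence $p^\xi := (\xi_i)_{i \leq 0} \in \Sigma_A^{\Z\ \mathrm{fin}} \setminus \{\O\}$, which has $l(p^\xi) = 0$. The cylinder $Z(p^\xi)$ is a nonempty open set (it contains $p^\xi$) and it does not contain $\O$, so it is a nonempty open subset of $\Sigma_A^\Z \setminus \{\O\}$. For distinct $\xi, \xi' \in \prod_{-\infty}^{0} A$ there is some $i \leq 0$ with $\xi_i \neq \xi'_i$, and any common point of $Z(p^\xi) \cap Z(p^{\xi'})$ would be forced to equal both $\xi_i$ and $\xi'_i$ at that coordinate, so these cylinders are disjoint. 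Since $\prod_{-\infty}^{0} A$ is uncountable, we obtain an uncountable family of pairwise disjoint nonempty open sets. A countable dense set would have to meet each of them, which is impossible, so $\Sigma_A^\Z \setminus \{\O\}$ is not separable. Since every second countable space is separable, this simultaneously rules out second countability.

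I do not foresee a real obstacle: the argument is essentially a counting argument against any countable dense (or basic) family, once one notices that cylinders specified by left-infinite data already carry uncountably many disjoint options. The only bookkeeping step is confirming that the chosen cylinders lie inside $\Sigma_A^\Z \setminus \{\O\}$, which is automatic because each $p^\xi$ has all coordinates at indices $i \leq 0$ in $A$, ruling out $\O$.
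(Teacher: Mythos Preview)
Your proof is correct and follows essentially the same approach as the paper: first countability via Lemma~\ref{nhoodbasis}, and failure of separability/second countability via the uncountable pairwise disjoint family of cylinders $\{Z((x_i)_{i\leq 0})\}$ indexed by left-infinite sequences of length zero. You have simply supplied more detail than the paper does (e.g., verifying countability of the neighborhood bases and checking that $\O$ is excluded from the cylinders).
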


\begin{proof}
$\Sigma_A^\Z\setminus\{\O\}$ is first countable since Lemma \ref{nhoodbasis} gives a countable neighborhood basis for each point of $\Sigma_A^\Z\setminus\{\O\}$. To see that $\Sigma_A^\Z\setminus\{\O\}$ is neither second countable nor separable, notice that the collection of open sets $\{Z((x_i)_{i\leq 0}): (x_i) \text{ is a sequence of length zero}\}$ is pairwise disjoint and uncountable.
	
%To prove that $\Sigma_A^\Z\setminus\{\O\}$ is not second countable, one can assume by contradiction that there exists a countable basis  for $\Sigma_A^\Z\setminus\{\O\}$. Due to Lemma \ref{nhoodbasis}, such basis is in the form $\{\mathcal{Z}_n\}_{n\in\N}$, where $\mathcal{Z}_n:=Z((x_i^n)_{i\leq k_n})$ for some $x^n\in\Sigma_A^{\Z\ \text{fin}}\setminus\{\O\}$ and $k_n\in\N$. Now,  define $t_0:=0$, and for each $m\in\N$ define  $t_m\in\Z$ such that $t_m<\min\{t_{m-1},\ k_n:\ n\leq m\}$. Thus $\{t_m\}_{m\in\N}$ is a decreasing sequence of negative integers, and such that $t_m<k_m$ for all $m\in\N$. Take $z\in\Sigma_A^\Z\setminus\{\O\}$ any sequence such that $z_{t_n}\neq (x^n)_{t_n}$ for all $n\in\N$. It means that $z$ doesn't belong to any $\mathcal{Z}_n$, a contradiction with the fact that $\{\mathcal{Z}_n\}_{n\in\N}$ is a basis for the topology.

%To prove that $\Sigma_A^\Z\setminus\{\O\}$ is not separable, we assume by contradiction that $D:=\{x^n\in\Sigma_A^\Z\setminus\{\O\}:\ n\in\N\}$ is a countable dense subset of $\Sigma_A^\Z\setminus\{\O\}$.
% As before, define $t_0:=0$, and for each $m\in\N$ define $t_m<\min\{t_{m-1},\ l(x^n):\ n\leq m\}$, and take $z\in\Sigma_A^\Z\setminus\{\O\}$ a sequence such that $z_{t_n}\neq (x^n)_{t_n}$ for all $n\in\N$. Hence $Z( (z_i)_{i\leq 0})$ is neighbourhood of $z$ which doesn't intercept $D$, contradicting that $D$ is dense.

\end{proof}

\begin{rmk}\label{convergence_to_O}Since there is no countable neighbourhood basis for $\O$, sequences are not adequate to describe continuity at $\O$ (though they can be used to show that a map is not continuous at $\O$). Moreover, convergence of a sequence to $\O$ can be tricky. For example, a sequence $(x^j)$ such that $l(x^j) \rightarrow -\infty$ converges to $\O$. Also sequences where, for all $j\in\N$ and $n\in\Z$, there exists $k>j$ and  $i<n$ such that $x_i^k\neq x_i^j$ will converge to $\O$. Indeed, take $a,b\in A$ and for all $j\in \N$ let $x^j=(x_i^j)_{i\in\Z}\in\Sigma_A^\Z$ defined as $x_i^j=a$ for all $i\neq -j$ and $x_{-j}^j=b$ -- the sequence $(x^j)$ converges to $\O$.

More generally, a sequence $(x^j)$ converges to $\O$ if and only if for any $Z(y)$ there are only a finite number of points $x^j$ which belong to $Z(y)$.
%Another sequence $(x^j)$ converging to $\O$ is such that the $i$-entry of $x^j$ is the same as the $i$-entry $x^{j+1}$ for $|i|<=j$ and so that the $-(j+1)$ entry of $x^{j+1}$ is  different from the $(j+1)$ entry of $x^j$. Given a cylinder Z[y] if one of the terms (say $x^n$) of the sequence is in Z[y] then all the following terms $(x^{n+k})$ are not in Z[y] and hence this sequence converges to $\O$.
\end{rmk}

\begin{rmk}
Despite the oddities of our space implied by Propositions \ref{non1countable} and \ref{non2countable}, we include $\O$ in our space for two reasons. Firstly, since the space is built from the inverse limit of $\Sigma_A^\N$, $\O$ is naturally included as the point corresponding to the infinite sequence which is constantly $\O\in \Sigma_A^\N$. Secondly, our main application is to generalize work of Kitchens \cite{kitchens} to shift spaces which have {\em inverse semigroup} operations. Inverse semigroups are typically assumed to have a zero element, and the point $\O$ naturally fills this role \cite{GSS1}.
\end{rmk}

The characterization of the sequences that converge to $\O$ given in Remark \ref{convergence_to_O} above can be used to prove the following result.

\begin{cor}\label{accumulation_at_O} Let $X\subset\Sigma_A^\Z$. Then $\O$ is an accumulation point of $X$ if and only if $X\setminus\{\O\}$ is not contained in a finite union of generalized cylinders.
\end{cor}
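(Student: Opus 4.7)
The plan is to prove this by directly unraveling the definition of accumulation point using the description of the neighborhood basis at $\O$, which is the only nontrivial preliminary step.

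First, I would identify explicitly the basic neighborhoods of $\O$. Since the definition of $Z(x,F)$ requires $x = (x_i)_{i\leq k} \in \Sigma_A^{\Z\ \text{fin}}\setminus\{\O\}$, any point $y \in Z(x,F)$ must satisfy $y_k = x_k \neq \o$, so $\O \notin Z(x,F)$. Consequently the only members of the generating family $B_{\Sigma_A^\Z}$ containing $\O$ are the sets of the form $U = \bigl[\bigcup_{i=1}^n Z(x^i)\bigr]^c$ with $x^1,\ldots,x^n \in \Sigma_A^{\Z\ \text{fin}}\setminus\{\O\}$ (including the empty union, which yields $\Sigma_A^\Z$ itself). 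Since $B_{\Sigma_A^\Z}$ is a basis (Lemma \ref{topologylemma}), these sets form a neighborhood basis at $\O$.

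For the forward direction, suppose $\O$ is an accumulation point of $X$. Given any finite collection $x^1,\ldots,x^n \in \Sigma_A^{\Z\ \text{fin}}\setminus\{\O\}$, the set $U = \bigl[\bigcup_{i=1}^n Z(x^i)\bigr]^c$ is an open neighborhood of $\O$, so by hypothesis it must meet $X\setminus\{\O\}$. Any $y \in U \cap (X\setminus\{\O\})$ witnesses that $X\setminus\{\O\} \not\subseteq \bigcup_{i=1}^n Z(x^i)$. For the reverse direction, suppose $X\setminus\{\O\}$ is not contained in any finite union of generalized cylinders, and let $V$ be any open neighborhood of $\O$. By the description above, $V$ contains a basic neighborhood $U = \bigl[\bigcup_{i=1}^n Z(x^i)\bigr]^c$ of $\O$. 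By assumption there exists $y \in X\setminus\{\O\}$ with $y \notin \bigcup_{i=1}^n Z(x^i)$, i.e., $y \in U \subseteq V$, showing that $\O$ is an accumulation point of $X$.

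There is no real obstacle here; the whole content of the corollary is the identification of the neighborhood basis at $\O$, which was essentially already established in the discussion preceding and inside the proof of Lemma \ref{topologylemma} and in Remark \ref{convergence_to_O}. I would only need to be careful to note the trivial observation that $\O$ cannot lie in any cylinder $Z(x,F)$, in order to exclude the other family of basic open sets from consideration.
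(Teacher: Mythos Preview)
Your proof is correct and essentially matches the paper's approach: the paper does not give a full proof but simply states that the corollary follows from the description of convergence to $\O$ in Remark~\ref{convergence_to_O}, which amounts precisely to identifying the neighborhood basis at $\O$ as the complements of finite unions of cylinders $Z(x)$. One minor remark: in the forward direction you only treat finite unions of cylinders $Z(x)$ rather than general $Z(x,F)$, but since $Z(x,F)\subseteq Z(x)$ this immediately yields the statement for arbitrary generalized cylinders, so no real gap arises.
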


The two-sided full shift $\Sigma_A^\Z$ can be projected onto the positive coordinates to yield the one-sided full shift $\Sigma_A^\N$. Let $\pi:\Sigma_A^\Z\to\Sigma_A^\N$ denote this projection map. Then clearly we have that if $x\in\Sigma_A^{\Z}$ and $l(x)\geq 1$ then $l\big(\pi(x)\big)=l(x)$ (where the $l$ in the left hand side stands for the length of a sequence in the one-sided shift).

\begin{prop}\label{k-projection}
Let $\pi:\Sigma_A^\Z\to\Sigma_A^\N$ be as above. Then $\pi$ is continuous at $x\in\Sigma_A^\Z$ if and only if $l(x)\geq 0$.
\end{prop}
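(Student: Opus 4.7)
The strategy is to work with the explicit neighborhood bases from Lemma~\ref{nhoodbasis} for $\Sigma_A^\Z$ and the analogous basis for $\Sigma_A^\N$ from \cite{OTW14}, splitting on the value of $l(x)$. Intuitively, when $l(x) \geq 0$ the positive coordinates of $x$ are rich enough to lift every basic neighborhood of $\pi(x)$ back to one of $x$; when $l(x) < 0$ the image $\pi(x)$ collapses to $\O \in \Sigma_A^\N$, whose basic neighborhoods in $\Sigma_A^\N$ control the coordinate at position $1$, something no basic neighborhood of $x$ in $\Sigma_A^\Z$ can constrain.

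For the sufficient direction, I would assume $l(x) \geq 0$ and split into three subcases. If $l(x) = \infty$, a basic neighborhood of $\pi(x)$ is $Z(x_1, \ldots, x_n)$, and the lift $V := Z((x_i)_{i \leq n})$ is a basic neighborhood of $x$ with $\pi(V) \subseteq Z(x_1, \ldots, x_n)$. If $1 \leq l(x) < \infty$, then $\pi(x)$ has length $l(x)$ in $\Sigma_A^\N$ with basic neighborhoods $Z(\pi(x), F)$, and the lift $V := Z((x_i)_{i \leq l(x)}, F)$ works. If $l(x) = 0$, then $\pi(x) = \O \in \Sigma_A^\N$ with basic neighborhoods $Z(\O, F)$, and the lift $V := Z((x_i)_{i \leq 0}, F)$ again works. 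In each subcase, $\pi(V) \subseteq U$ is immediate by comparing the defining conditions of the cylinders.

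For the necessary direction, I would assume $l(x) < 0$, so $\pi(x) = \O \in \Sigma_A^\N$. Fix any $b \in A$ and consider $U := Z(\O, \{b\}) = \{z \in \Sigma_A^\N : z_1 \neq b\}$, an open neighborhood of $\pi(x)$. The plan is to exhibit a sequence $x^j \to x$ in $\Sigma_A^\Z$ with $\pi(x^j) \notin U$ for every $j$, which disproves continuity (such a sequence argument refutes continuity regardless of first countability). If $-\infty < l(x) = k < 0$, take distinct letters $a_j \in A$ and set $x^j_i := x_i$ for $i \leq k$, $x^j_{k+1} := a_j$, and $x^j_i := b$ for $i \geq k+2$; the basis $\{Z(x,F)\}_F$ gives $x^j \to x$ while $\pi(x^j)_1 = b$ for all $j$. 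If $x = \O$, set $x^j_i := a_j$ for $i < 0$ and $x^j_i := b$ for $i \geq 0$; by Remark~\ref{convergence_to_O}, $x^j \to \O$ because any generalized cylinder $Z(y)$ contains at most one $x^j$, while $\pi(x^j)$ is the constant sequence $(b, b, \ldots)$. The main obstacle is this $x = \O$ case, since non-first-countability at $\O$ (Proposition~\ref{non1countable}) forbids the routine sequence constructions one would use in a metric setting, but Remark~\ref{convergence_to_O} supplies precisely the criterion needed to verify $x^j \to \O$.
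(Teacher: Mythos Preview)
Your proof is correct and follows essentially the same strategy as the paper's: both arguments split on the value of $l(x)$, handle the continuity direction by lifting basic neighborhoods of $\pi(x)$ to basic neighborhoods of $x$, and handle the discontinuity direction by constructing explicit sequences converging to $x$ whose projections are constant (and nonempty). The only cosmetic differences are that you fix a specific target neighborhood $Z(\O,\{b\})$ to witness discontinuity and invoke Remark~\ref{convergence_to_O} explicitly for the $x=\O$ case, whereas the paper simply notes $\pi(z^n)\neq\O$; neither of these changes the substance of the argument.
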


\begin{proof}

Let $x=(x_i)_{i\leq l(x)}\in\Sigma_A^\Z$. First suppose that $l(x)\geq 1$. Then $\pi(x)=(x_i)_{1\leq i\leq l(x)}$. Given $k\in\N$, $1\leq k\leq l(x)$, let $Z(x_1x_2\ldots x_k,F)$ be a generalized cylinder of $\Sigma_A^\N$ containing
$\pi(x)$. Then $Z\big((x_i)_{i\leq k},F\big)$ is a generalized cylinder of $\Sigma_A^\Z$ which contains $x$ and such that $\pi\Big(Z\big((x_i)_{i\leq k},F\big)\Big)=Z(x_1x_2\ldots x_k,F)$.
In the case $l(x)=0$, we have that $\pi(x)=\O$ and hence, given a neighborhood $Z(\O,F)$ of the empty sequence in $\Sigma_A^\N$, we have that $Z(x,F)$ is a neighborhood of $x$ in $\Sigma_A^\Z$ such that
$\pi\big(Z(x,F)\big)=Z(\O,F)$. Thus we have that if $l(x) \geq 0$ then $\pi$ is continuous.

Now suppose $-\infty<l(x)\leq -1$. Notice that $\pi(x)=\O$. Find an element $(z^n)_{n\geq 1}\in\Sigma_A^{\Z\ \text{inf}}$, such that $z^n_i=x_i$ for all $i\leq l(x)$, $z^n_{l(x)+1}\neq z^m_{l(x)+1}$ for all $n\neq m$, and
$z^n_i=z^m_i$ for all $i\geq 1$ and $m,n\geq 1$. The sequence $z^n$ converges to $x$, but $\pi(z^n)=(z^n_i)_{i\in\N}=(z^1_i)_{i\in\N}\neq\O = \pi(x)$ and hence $\pi$ is not continuous at $x$.

Finally, suppose $l(x)=-\infty$, that is, $x=\O$. Let $(z^n)_{n\geq 1}\in\Sigma_A^{\Z\ \text{inf}}$ be a sequence such that $z^n_i=z^m_i$ for all $i,m,n \geq 1$ and such that $z^n_{i}\neq z^m_{i}$ for all $i\leq 0$ and $n\neq m \geq 1$. Then $(z^n)$ converges to $\O$, but $\pi(z^n)=(z^n_i)_{i\in\N}=(z^1_i)_{i\in\N}\neq\O$ and hence $\pi$ is not continuous at $\O$.

\end{proof}

The {\em shift map} $\s:\Sigma_A^\Z\to\Sigma_A^\Z$ is given by $\s\big((x_i)_{i\in\Z}\big)= (x_{i+1})_{i\in\Z}$ for all $(x_i)_{i\in\Z}\in \Sigma_A^\Z$. In particular, we have that
$\s(\O)=\O$ and if $x\in\Sigma_A^{\Z\ \text{fin}}\setminus\{\O\}$ then $\s(\ldots x_{-2}x_{-1}x_0.x_1x_2\ldots x_k)=(\ldots x_{-1}x_0x_1.x_2x_3\ldots x_k)$.

Note that the shift map for two-sided shifts is invertible. Furthermore, while the shift map on $\Sigma_A^\N$ is not continuous at $\O$ (see \cite[Proposition 2.23]{OTW14}), we show below that the shift map on $\Sigma_A^\Z$ is continuous everywhere.

\begin{prop}
The shift map $\s:\Sigma_A^\Z\to\Sigma_A^\Z$ is a homeomorphism.
\end{prop}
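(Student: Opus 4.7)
The plan is to exploit the fact that $\sigma$ is a bijection on $\Sigma_A^\Z$ (its inverse being the right-shift $\sigma^{-1}((x_i)_{i\in\Z}) = (x_{i-1})_{i\in\Z}$, and noting $\sigma(\O) = \sigma^{-1}(\O)=\O$), and reduce continuity to a check on a basis. Since $B_{\Sigma^\Z_A}$ generates $\tau_{\Sigma^\Z_A}$, it suffices to show that $\sigma(U)$ and $\sigma^{-1}(U)$ both lie in $\tau_{\Sigma^\Z_A}$ for every $U\in B_{\Sigma^\Z_A}$. I will in fact show the stronger statement that each sends a basis element to a basis element of the same type, which gives continuity of both $\sigma$ and $\sigma^{-1}$.

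First I would handle the generalized cylinders. Given $x=(x_i)_{i\leq k}\in \Sigma_A^{\Z\ \text{fin}}\setminus\{\O\}$ and a finite $F\subset A$, define $x^{\rightarrow}=(x^{\rightarrow}_i)_{i\leq k+1}$ by $x^{\rightarrow}_i:=x_{i-1}$, and $x^{\leftarrow}=(x^{\leftarrow}_i)_{i\leq k-1}$ by $x^{\leftarrow}_i:=x_{i+1}$ (both nonempty since $x\neq \O$). A direct re-indexing shows
\[
\sigma^{-1}\bigl(Z(x,F)\bigr)=Z(x^{\rightarrow},F)\qquad\text{and}\qquad \sigma\bigl(Z(x,F)\bigr)=Z(x^{\leftarrow},F),
\]
both of which are elements of $B_{\Sigma^\Z_A}$.

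Next I would handle the complements of finite unions of cylinders $Z(x^1),\dots,Z(x^p)$. Since $\sigma$ is a bijection that fixes $\O$, we have
\[
\sigma^{-1}\Bigl(\bigl[\textstyle\bigcup_{i=1}^p Z(x^i)\bigr]^c\Bigr)=\bigl[\textstyle\bigcup_{i=1}^p \sigma^{-1}(Z(x^i))\bigr]^c=\bigl[\textstyle\bigcup_{i=1}^p Z((x^i)^{\rightarrow})\bigr]^c,
\]
and the analogous identity with $\sigma$ in place of $\sigma^{-1}$ and $(x^i)^{\leftarrow}$ in place of $(x^i)^{\rightarrow}$. Both sides are again in $B_{\Sigma^\Z_A}$.

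Combining the two cases, $\sigma$ and $\sigma^{-1}$ map $B_{\Sigma^\Z_A}$ into itself, so both are continuous; together with bijectivity this gives that $\sigma$ is a homeomorphism. I do not anticipate any real obstacle here: the argument is entirely a matter of re-indexing, and the crucial structural point — that shifting a basis element of the form $Z(x,F)$ still specifies infinitely many left-coordinates plus one forbidden set of letters — is preserved precisely because our cylinders are defined using left-infinite tails rather than finite words, so the image has the same shape.
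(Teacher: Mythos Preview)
Your argument is correct and follows the same core computation as the paper: showing that the preimage of a generalized cylinder $Z(x,F)$ under $\sigma$ is again a generalized cylinder (just shifted), and likewise for complements of finite unions of cylinders. The only difference is that the paper invokes the fact that $\Sigma_A^\Z$ is compact and Hausdorff, so a continuous bijection is automatically a homeomorphism, and therefore checks continuity only for $\sigma$; you instead carry out the symmetric computation for $\sigma^{-1}$ as well. Your route has the small advantage of not forward-referencing compactness (which in the paper is established only afterwards), at the cost of one extra, essentially identical, re-indexing.
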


\begin{proof}

 Since $\Sigma_A^\Z$ is compact and Hausdorff and $\s$ is a bijection, we just need to prove that $\s$ is continuous.

Let $x\in\Sigma_A^{\Z\ \text{fin}}$, with $l(x)=k>-\infty$, and let $F\subset A$ be a finite subset. Since $l(\sigma^{-1}(x))=k+1$, for each cylinder $Z(x,F)$ we have
that $\s^{-1}\big(Z(x,F)\big)=Z(\sigma^{-1}(x),F)$.
Also, given $x^1,\ldots,x^m\in \Sigma_A^{\Z\ \text{fin}}$, let $\mathcal{Z}:=\left[\bigcup_{1\leq j\leq m}Z(x^j)\right]^c$ be neighborhood of $\O$. We calculate
 $$\begin{array}{lcl}\s^{-1}\big(\mathcal{Z}\big)&=&\displaystyle \s^{-1}\left(\left[\bigcup_{1\leq j\leq m}Z(x^j)\right]^c\right)=\s^{-1}\left(\bigcap_{1\leq j\leq m}Z(x^j)^c\right)\\\\&=&\displaystyle \bigcap_{1\leq j\leq
 m}\left[\s^{-1}\left(Z(x^j)\right)\right]^c=\bigcap_{1\leq j\leq m}Z(\s^{-1}(x^j))^c=\left[\bigcup_{1\leq j\leq m}Z(\s^{-1}(x^j))\right]^c\end{array}$$
and hence $\s$ is continuous.

\end{proof}

The following proposition will be useful to us later, and provides another link between our construction and that of \cite{OTW14}.

\begin{prop}\label{compact_cylindres} For all $x \in  \Sigma_A^{\Z\ \text{fin}}\setminus\{\O\}$, the cylinder $Z(x) \subset \Sigma_A^{\Z}$ with the relative topology is homeomorphic to the Ott-Tomforde-Willis full shift $\Sigma_A^{\N}$.
\end{prop}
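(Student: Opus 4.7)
The plan is to exhibit a ``read off the tail'' homeomorphism. Writing $k = l(x) \in \Z$, define $\phi \colon Z(x) \to \Sigma_A^\N$ by $\phi(y) = (y_{k+i})_{i \in \N}$ for $y \in Z(x)$. Because every $y \in Z(x)$ satisfies $y_i = x_i$ for $i \le k$, the left tail carries no information and $\phi$ simply discards it. Under this map infinite sequences go to infinite sequences, a sequence of length $k+m$ with $m \ge 1$ goes to one of length $m$ in $\Sigma_A^\N$, and the point $y = x$ (which has length $k$) is sent to the empty one-sided sequence $\O \in \Sigma_A^\N$. A two-sided inverse is obtained by prepending the fixed tail $(\ldots, x_{k-1}, x_k)$ to any $z \in \Sigma_A^\N$, so $\phi$ is a bijection.

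To show $\phi$ is a homeomorphism I would check that it matches the neighbourhood basis of $Z(x)$ given by Lemma \ref{nhoodbasis} (inherited from $\tau_{\Sigma_A^\Z}$) with the OTW neighbourhood basis \cite[Theorem 2.16]{OTW14} of $\Sigma_A^\N$, case by case. For $y \in Z(x)$ with $l(y) = \infty$, the basic neighbourhoods of $y$ in $Z(x)$ are the sets $Z(\ldots, y_{n-1}, y_n)$ for $n > k$ (for $n \le k$ the cylinder already contains $Z(x)$ and is therefore trivial in the subspace topology); each of these is mapped bijectively by $\phi$ onto $Z(y_{k+1} y_{k+2} \cdots y_n) \subset \Sigma_A^\N$, a basic neighbourhood of $\phi(y)$. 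For $y \in Z(x)$ with $l(y) = k+m < \infty$, $m \ge 0$ (including $y = x$, $m = 0$), the basic neighbourhoods in $Z(x)$ are the sets $Z(y, F) \cap Z(x) = Z(y, F)$ for finite $F \subset A$, the intersection being trivial because $y$ extends $x$; each is mapped bijectively onto $Z(\phi(y), F) \subset \Sigma_A^\N$. Since $\phi$ sends a basis bijectively onto a basis, both $\phi$ and $\phi^{-1}$ are continuous.

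The main obstacle, and the only place where some care is required, is the distinguished point $y = x$, which is sent to $\O \in \Sigma_A^\N$; neighbourhoods of $\O$ are known to behave awkwardly in the OTW topology. At this point one also has to account for the restrictions to $Z(x)$ of the ``complement-of-cylinders'' basic sets in $B_{\Sigma_A^\Z}$. Using the identity $Z(x, F) = Z(x) \setminus \bigcup_{a \in F} Z(\ldots, x_{k-1}, x_k, a)$ and the basis computations carried out in the proof of Lemma \ref{topologylemma}, one checks that after intersecting with $Z(x)$ such sets reduce to (unions and intersections handled by) the sets $Z(x, F)$ already considered above, which $\phi$ sends precisely onto the OTW neighbourhoods $Z(\O, F)$ of $\O$. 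Once this matching at $x$ is verified the rest of the verification is a routine unpacking, and one could alternatively shortcut the final step by invoking that a continuous bijection between a compact space and a Hausdorff space is automatically a homeomorphism.
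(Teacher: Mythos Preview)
Your proof is correct and uses the same bijection as the paper (the paper defines the inverse map $f_x:\Sigma_A^\N\to Z(x)$). The paper's argument is slightly slicker: it only checks continuity of $f_x$ by computing preimages of basic sets $Z(x)\cap Z(y,G)$, and then invokes the compact-to-Hausdorff shortcut using the \emph{known} compactness of $\Sigma_A^\N$; your direct matching of neighbourhood bases at each type of point accomplishes the same thing without that shortcut. One small caution about your final parenthetical remark: if by ``shortcut'' you mean proving $\phi$ continuous and using compactness of the domain $Z(x)$, that would be circular here, since in the paper compactness of $\Sigma_A^\Z$ (and hence of $Z(x)$) is deduced \emph{from} this proposition; the non-circular version is exactly the paper's, applying the shortcut to $\phi^{-1}=f_x$ with domain the compact space $\Sigma_A^\N$.
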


\begin{proof}
Take $x \in  \Sigma_A^{\Z\ \text{fin}}\setminus\{\O\}$ and define a map $f_x: \Sigma_A^\N \to Z(x)$ by $$\left(f_x(z)\right)_i=\left\{\begin{array}{lcl}
x_i, &\text{ if}& i\leq l(x)\\
z_{i-l(x)}, &\text{ if}& i \geq l(x)+1.
\end{array}\right.
$$ We claim that $f_x$ is a homeomorphism. This map is clearly bijective and, since $ \Sigma_A^\N$ is compact and $Z(x)$ is Hausdorff, our claim will be proven if we show that $f_x$ is continuous. By Lemma \ref{nhoodbasis} every open set in $Z(x)$ is a union of open sets of the form $Z(x)\cap Z(y, G)$, where $y\in \Sigma_A^{\Z\ \text{fin}}$ and $G\subset A$ is finite. A short calculation gives that

\begin{equation*}f_x^{-1}\big(Z(x)\cap Z(y,G)\big)=\begin{cases}
Z(y_{l(x)+1}\ldots y_{l(y)},G) &\text{if } l(x)<l(y)\ \text{and } x_i=y_i\ \forall i\leq l(x),\\
Z(\O,G) &\text{if } l(x)=l(y)\text{ and } x_i=y_i,\ \forall i\leq l(x),\\
\Sigma_A^\N &\text{if }l(x)>l(y), x_i=y_i\ \forall i \leq l(y) \text{ and } x_{l(y)+i}\notin G\\
\emptyset &\text{otherwise.}
\end{cases}
\end{equation*}

In any case, $f_x^{-1}\big(Z(x)\cap Z(y,G)\big)$ is open in $\Sigma_A^\N$, and so $f_x$ is continuous.
\end{proof}
Now, in a similar fashion to \cite{OTW14}, we show that our construction yields a compact space.

\begin{prop} $\Sigma_A^\Z$ is compact and sequentially compact.
\end{prop}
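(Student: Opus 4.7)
The plan is to prove both assertions using the same reduction, namely that anything outside a neighbourhood of $\O$ lives inside a finite union of cylinders $Z(y)$, each of which is (by Proposition \ref{compact_cylindres}) homeomorphic to the Ott-Tomforde-Willis one-sided full shift $\Sigma_A^\N$, and so inherits its compactness and metrizability.

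For compactness, let $\{U_\alpha\}_{\alpha \in I}$ be an open cover of $\Sigma_A^\Z$. Pick an index $\alpha_0$ with $\O \in U_{\alpha_0}$. Since $\O$ has a neighbourhood subbasis consisting of complements of finite unions of cylinders $Z(x)$, there exist $x^1,\dots,x^m \in \Sigma_A^{\Z\ \text{fin}}\setminus\{\O\}$ such that
\[
\Big[\bigcup_{j=1}^{m} Z(x^j)\Big]^c \subseteq U_{\alpha_0}, \quad \text{hence} \quad \Sigma_A^\Z \setminus U_{\alpha_0} \subseteq \bigcup_{j=1}^{m} Z(x^j).
\]
Each $Z(x^j)$ is closed in $\Sigma_A^\Z$ and, via Proposition \ref{compact_cylindres}, homeomorphic to the compact space $\Sigma_A^\N$, so the finite union is compact. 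Therefore $\Sigma_A^\Z \setminus U_{\alpha_0}$ is a closed subset of a compact set, hence compact, and can be covered by finitely many of the $U_\alpha$'s. Adjoining $U_{\alpha_0}$ yields the desired finite subcover.

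For sequential compactness, let $(x^n)_{n\in\N}$ be a sequence in $\Sigma_A^\Z$. I would split into two cases. If for every $y\in \Sigma_A^{\Z\ \text{fin}}\setminus\{\O\}$ the cylinder $Z(y)$ contains only finitely many terms of the sequence, then by the characterization in Remark \ref{convergence_to_O} the sequence $(x^n)$ itself converges to $\O$, and we are done. Otherwise, there is some $y$ for which $Z(y)$ contains infinitely many terms; extract the corresponding subsequence, which lies entirely in $Z(y)$. Since $Z(y)$ is clopen, convergence in $Z(y)$ with the relative topology is equivalent to convergence in $\Sigma_A^\Z$. By Proposition \ref{compact_cylindres}, $Z(y)$ is homeomorphic to $\Sigma_A^\N$, which is compact and metrizable, and hence sequentially compact; extracting a further convergent subsequence there produces a subsequence of $(x^n)$ convergent in $\Sigma_A^\Z$.

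The main subtlety, and where one must be careful, is that $\Sigma_A^\Z$ is not first countable at $\O$ (Proposition \ref{non1countable}), so compactness does not automatically give sequential compactness and one cannot just pick any accumulation point. The dichotomy above handles this by isolating the point $\O$ as the only place where the ``bad'' behaviour occurs: whenever no cylinder captures infinitely many terms, we automatically fall into the regime describing convergence to $\O$, and otherwise we retreat into a cylinder where the topology is metrizable.
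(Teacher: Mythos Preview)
Your proof is correct and follows essentially the same approach as the paper's: for compactness you isolate a basic neighbourhood of $\O$ and use Proposition~\ref{compact_cylindres} to cover the remaining finitely many cylinders, and for sequential compactness you use the same dichotomy (either no cylinder captures infinitely many terms, giving convergence to $\O$, or some cylinder does, and you invoke metrizability of $\Sigma_A^\N$). The only cosmetic difference is that the paper first reduces the cover to basic open sets, whereas you work directly with an arbitrary $U_{\alpha_0}$ and find a basic neighbourhood of $\O$ inside it; your version is slightly cleaner.
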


\begin{proof}

Let $\{V_{\alpha}\}$ be an open cover for $\Sigma_A^\Z$. Without loss of generality we may assume that each $V_\alpha$ is either a cylinder or the complement of a finite union of cylinders. Since $\O$ does not belong to any  cylinder set, there exists $\alpha_0$ such that $V_{\alpha_0} = \displaystyle \left[\bigcup_{1\leq j\leq m}Z(x^j)\right]^c$, where $x^j \in  \Sigma_A^{\Z\ \text{fin}}$.
%Pick $V_{\alpha_0}$ for the subcover of $\{V_{\alpha}\}$.
Notice that $\{V_{\alpha}:\alpha \neq \alpha_0 \}$ covers $\bigcup_{1\leq j\leq m}Z(x^j) $, and so compactness of each $Z(x^j)$ (Proposition \ref{compact_cylindres}) implies we can find a finite subset $\{V_{\alpha_i}\}_{i=1}^{K}$ of $\{V_{\alpha}\}$ which covers $\bigcup_{1\leq j\leq m}Z(x^j)$. Hence, $\{V_{\alpha_i}\}_{i=1}^{K}\cup \{V_{\alpha_0}\}$ covers $\Sigma_A^\Z$, and so $\Sigma_A^\Z$ is compact.

To check sequential compactness, let $(y^i)_{i\in\N}$ be any sequence in $\Sigma_A^\Z$. First, suppose that there does not exist any cylinder $Z(x)\subset \Sigma_A^\Z$ such that $y^i\in Z(x)$ for infinitely many $i\in\N$. In such a case, we have that $y^i$ converges to $\O$ as $i$ goes to infinity. On the other hand, if there exists $Z(x)\subset \Sigma_A^\Z$ such that $y^i\in Z(x)$ for infinitely many  $i\in\N$, then we can take a subsequence $(y^{i_\ell})_{\ell\in\N}$ such that $y^{i_\ell}\in Z(x)$ for all $\ell\in\N$. Since $Z(x)$ is homeomorphic to $\Sigma_A^\N$, and $\Sigma_A^\N$ is sequentially compact (because it is a compact metric space), then $Z(x)$ is also sequentially compact and there exists a subsequence of  $(y^{i_\ell})_{\ell\in\N}$ which converges in $Z(x)$.
\end{proof}

\subsection{Two-sided shift spaces}
In this section we define shift spaces in our context. We then show that any given shift space can be characterized by a set of forbidden words. The set of forbidden words ends up containing some left-infinite words -- this seems only natural since our topology also depends on such words.

Given a subset $X\subseteq\Sigma^\Z_A$, let \begin{equation}\label{shift_inf_fin} X^{\text{fin}}:=X\cap\Sigma_A^{\Z\ \text{fin}} \text{ and }
X^{\text{inf}}:=X\cap\Sigma_A^{\Z\ \text{inf}}\end{equation}
be the set of all finite sequences in $X$ and the set of all infinite sequences in $X$, respectively. Given any set $D$ we let $D^{\Z^-}:=\{(x_i)_{i\leq 0}:\ x_i\in D\}$ and let
\begin{equation}\label{left-infty_subblocks}B_{\text{linf}}(X):=\{(a_i)_{i\leq 0}\in (\tilde A)^{\Z^-}:\ \exists \ x\in X,k\in\Z\text{ such that } x_{i+k}=a_i\ \forall i\leq 0\}\end{equation} be the set of left-infinite subblocks of $X$. For $1\leq n<\infty$ let
\begin{equation}\label{subblocks}B_n(X):=\{(x_1,\ldots,x_n)\in {\tilde A}^n: x_1,\ldots,x_n\text{ is a subblock of some sequence in }X\}\end{equation} be the set of all {\em blocks of length $n$} in $X$. Note that blocks in $B_n(X)$ may contain the empty letter.

We single out the blocks of length one, and use the notation $\LA:= B_1(X)\setminus\{\o\}$ -- this is the set of all symbols of $A$ used by sequences of $X$, or the {\em letters} of $X$. The {\em language} of $X$ is \begin{equation}\label{language}B(X):=\bigcup_{n\in\N}B_n(X).\end{equation}

%We can use $B_{\text{linf}}(\Lambda)$ to rewrite Corollary \ref{accumulation_at_O} as follows:

%\begin{cor} The empty sequence $\O$ is accumulation point $X\subseteq\Sigma_A^\Z$ if and only if $B_{\text{linf}}(X)$ has infinitely many elements.
%\end{cor}
%O Corolario acima esta errado pois B_linf de um cilindro Z(...x_0) tem infinitos elementos, e esta obivamente contido em um cilindro (contraria o corolario 2.10

Before we define shift spaces we need the following definitions.

\begin{defn}
Let $X\subset \Sigma^\Z_A$ and $k\geq 1$. Given $1\leq n <\infty$ and $a\in B_n(X)$, the {\em $k^{th}$ follower set} of $a$ in $X$ is the set \begin{equation}\label{followersets}\F_k(X,a):=\{b\in B_k(X):\ ab\in B_{k+n}(X)\},\end{equation}and the {\em $k^{th}$ predecessor set} of $a$ in $X$ is\begin{equation}\label{predecessorsets}\Pp_k(X,a):=\{b\in B_k(X):\ ba\in B_{k+n}(X)\}.\end{equation}

Similarly, the $k^{th}$ follower set of $a\in B_{\text{linf}}(X)$ in $X$ is defined as \begin{equation}\label{followersets_ifinite}\F_k(X,a):=\{b\in B_k(X):\ ab\in B_{\text{linf}}(X)\}.\end{equation}

\end{defn}

%\begin{defn}
%Let $X\subset \Sigma_A^\Z$ and, for $x=(x_i)_{i\leq l(x)}\in\Sigma_A^{\text{fin}}\setminus\{\O\}$, let $x^*=(x_{i+l(x)})_{i\leq 0}\in B_{\text{linf}}(\Sigma_A^{\text{fin}})$. We say that $X$ satisfies the {\em infinite extension property} if $x\in X^{\text{fin}}$ iff $|\F_1(X,x^*)|=\infty$.
%\end{defn}

\begin{defn}
Let $X\subset \Sigma_A^\Z$. We say that $X$ satisfies the {\em infinite extension property} if  $x\in X^{\text{fin}}\setminus\{\O\}$ implies that $|\F_1(X,(x_{i+l(x)})_{i\leq 0})|=\infty$.
\end{defn}

\begin{rmk} Note that for one-sided shift spaces the definition of the infinite extension property includes the empty sequence (see \cite[Definition 3.1]{OTW14}), while here there is no condition required on the empty sequence (indeed, it is not even clear what it would mean to ``follow'' the empty sequence).
\end{rmk}

\begin{lem}\label{infdense2}
Let $X\subseteq\Sigma_A^\Z$ be a set such that $\s(X)=X$. Then $X^{\text{inf}}$ is dense in $X$ if and only if $X^{\text{fin}}\setminus\{\O\}$ satisfies the infinite extension property. Furthermore, $\O$ is an accumulation point of $X^{\text{inf}}$ if and only if $|X^{\text{inf}}|=\infty$.
\end{lem}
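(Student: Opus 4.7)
The plan is to prove the two equivalences separately. The first splits into two implications, and the second will reduce to Corollary~\ref{accumulation_at_O} combined with a pigeonhole argument exploiting shift invariance.

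For the forward direction of the first equivalence ($X^{\text{inf}}$ dense $\Rightarrow$ infinite extension property), I would fix $x \in X^{\text{fin}}\setminus\{\O\}$ with $k=l(x)$. Density of $X^{\text{inf}}$ in $X$ gives, for each finite $F\subseteq A$, a point $y\in X^{\text{inf}}\cap Z(x,F)$; since $y\in X$, the left-infinite block $(x_{i+k})_{i\leq 0}\cdot y_{k+1}$ lies in $B_{\text{linf}}(X)$, so $y_{k+1}\in\F_1(X,(x_{i+k})_{i\leq 0})\setminus F$. Iteratively enlarging $F$ to exclude previously found letters yields infinitely many distinct elements of $\F_1$, proving the property. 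For the reverse implication, given the infinite extension property and a basic neighborhood $Z(x,F)$ of a point $x\in X^{\text{fin}}\setminus\{\O\}$, I would first pick $b\in A\setminus F$ with $(x_{i+l(x)})_{i\leq 0}\cdot b\in B_{\text{linf}}(X)$ from the property, and then unpack the definition of $B_{\text{linf}}(X)$ and use $\sigma(X)=X$ to produce an element $y^{(0)}\in X\cap Z(x,F)$ with $y^{(0)}_{l(x)+1}=b$. If $y^{(0)}\in X^{\text{inf}}$ we are done; otherwise I iterate the property on $y^{(0)}$ to produce a sequence $(y^{(n)})_n\subseteq X\cap Z(x,F)$ with $l(y^{(n)})\to\infty$. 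By Lemma~\ref{nhoodbasis} this sequence converges in $\tau_{\Sigma_A^\Z}$ to an infinite sequence $y\in Z(x,F)$, and closedness of $X$ (the setting in which the shift spaces of the paper live) then places $y$ in $X^{\text{inf}}\cap Z(x,F)$. The case $x=\O$ is subsumed by the second part of the lemma.

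For the second equivalence, Corollary~\ref{accumulation_at_O} rephrases ``$\O$ is an accumulation point of $X^{\text{inf}}$'' as ``$X^{\text{inf}}$ is not contained in any finite union of cylinders $Z(x^j)$''. The direction $|X^{\text{inf}}|<\infty \Rightarrow \O$ not an accumulation point is immediate, since a finite set can be covered by one cylinder per point. For the converse, assuming $X^{\text{inf}}\subseteq\bigcup_{j=1}^m Z(x^j)$, I would fix $y\in X^{\text{inf}}$ and record, for each $n\in\Z$, an index $j(n)$ with $\sigma^n(y)\in Z(x^{j(n)})$. Pigeonhole forces some $j^*$ to recur for infinitely many $n$, and comparing the constraints $y_{i+n}=x^{j^*}_i$ across different such $n$ forces a periodicity in the left-infinite tail of $x^{j^*}$ and confines $y$ to one of finitely many periodic orbits determined by $\{x^1,\ldots,x^m\}$; doing this for every $y\in X^{\text{inf}}$ yields $|X^{\text{inf}}|<\infty$.

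The main technical obstacle is this pigeonhole-plus-periodicity step in the second part, since it must convert a purely combinatorial recurrence of cylinder indices under shift invariance into a rigid periodic structure on $y$. A secondary source of care is the iterative extension argument in the backward direction of the first equivalence, which crucially relies on closedness of $X$ to lift the pointwise limit of finite elements back into the space.
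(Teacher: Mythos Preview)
Your approach matches the paper's: the first equivalence is argued identically (density gives distinct followers via shrinking $Z(x,F)$; the extension property is iterated to produce nested cylinders whose intersection yields an infinite point, and your explicit appeal to closedness of $X$ is exactly what the paper's phrase ``since the cylinders are closed'' is invoking). For the second part the paper simply asserts in one line that $\sigma(X)=X$ precludes $X^{\text{inf}}$ from lying in finitely many cylinders and cites Corollary~\ref{accumulation_at_O}; your pigeonhole--periodicity argument is the natural justification of that assertion---just be sure to select $j^*$ recurring for $n\to+\infty$, so that the constraints $y_{i+n}=x^{j^*}_i$ propagate to determine all coordinates of $y$.
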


\begin{proof}Suppose that $X^{\text{inf}}$ is dense. If $X^{\text{fin}}\setminus\{\O\}=\emptyset$, then there is nothing to do. So assume that there exists $x\in X^{\text{fin}}$, $x\neq \O$.

From Lemma \ref{nhoodbasis} the family of sets
$\{Z(x, F):\ F\text{ is a finite subset of }A\}$ is a neighborhood basis for $x$. Let $(a_i)_{i\in\N}$ be a sequence of symbols from $A$ without repetitions and, for each $n\in\N$, define $F_n:=\{a_i:\ i\leq n\}$. Since $X^{\text{inf}}$ is dense in $X$ we have that $Z(x, F_n)\cap X^{\text{inf}}\neq\emptyset$, for all $n\in\N$. Hence we can take a sequence $(y^n)_{n\in\N}$ in $X^{\text{inf}}$ where $y^n\in Z(x, F_n)\cap X^{\text{inf}}$, for all $n$. It follows that $(y^n_i)_{i\leq l(x)} = (x_i)_{i\leq l(x)}$ for all $n\in\N$, and $y^m_{l(x)+1}\neq y^n_{l(x)+1}$ for all $m\neq n$, which means that $\{y^n_{l(x)+1}\}_{n\in\N}$ is an infinite subset of $\F_1( X,x^*)$, where $x^*=(x_{i+k})_{i\leq 0}$.

Conversely, suppose that $X^{\text{fin}}\setminus\{\O\}$ satisfies the infinite extension property. Take $x\in X^{\text{fin}}\setminus\{\O\}$ and let $F\subset A$ be a finite set. Since sets of the type $Z(x, F)$ form a neighbourhood basis for $x$, we just need to prove that $Z(x, F)\cap X^{\text{inf}}$ is nonempty.
Due to the infinite extension property we have that $|\mathcal{F}_1( X, x^*)| = \infty$, which implies that we can find $y\in  X$ such that $y\neq x$ and $y\in Z(x, F)\cap X$. If $y\in  X^{\text{inf}}$ then we are done. Otherwise, we have that $l(y)>l(x)$ and $Z(y)\cap X\subsetneq  Z(x, F)\cap X$. Set $y^1:=y$ and use the infinite extension property again to pick $y^2\in Z(y^1)\cap X$ such that $y^2\neq y^1$. Again, if $y^2\in  X^{\text{inf}}$ then we are done. Otherwise, we have that $l(y^2)>l(y^1)$ and $Z(y^2)\cap X\subsetneq  Z(y^1)\cap X$ and we can proceed recursively. So, either we will find some infinite sequence in some step proving the result, or we will define a sequence $(y^i)_{i\in\N}$ in $X^{\text{inf}}$ such that $Z(y^{i+1})\cap X\subsetneq Z(y^i)\cap X\subsetneq Z(x, F)\cap X$ for all $i\in\N$. In the last case, since the cylinders are closed, it follows that there exists an infinite sequence $z\in\bigcap_{i\in\N} \big[Z(y^i)\cap X\big]\subset Z(x, F)\cap X$.

Finally, for the last statement, notice that if $|X^{\text{inf}}|<\infty$ then $\O$ is not an accumulation point of $X^{\text{inf}}$. Conversely, if $|X^{\text{inf}}|=\infty$, then the fact that $\sigma(X) = X$ precludes the possibility that $X^{\text{inf}}$ is contained in a finite union of generalized cylinders. Therefore, by Corollary \ref{accumulation_at_O}, we have that $\O$ is an accumulation point of $X^{\text{inf}}$.

\end{proof}

We now define two-sided shift spaces.

\begin{defn}\label{2-sided_shifts} A set $\Lambda\subseteq\Sigma_A^\Z$ is said to be a {\em two-sided shift space over $A$} if the following three properties hold:
\begin{enumerate}
\item[1 -] $\Lambda$ is closed with respect to the topology of $\Sigma^\Z_A$;
\item[2 -] $\Lambda$ is invariant under the shift map, that is, $\s(\Lambda) = \Lambda$;
\item[3 -] $\Lambda^{\text{inf}}$ is dense in $\Lambda$.
\end{enumerate}
\end{defn}

If there is no possibility of confusion, we will simply call such a subset a {\em shift space}. Given a shift space $\Lambda\subseteq\Sigma_A^\Z$, we endow it with the
subspace topology from $\Sigma_A^\Z$.

\begin{rmk} Ott-Tomforde-Willis one-sided shift spaces are closed subsets of $\Lambda\subset\Sigma_A^\N$ such that $\s(\Lambda)\subset\Lambda$ and such that all elements of  $\Lambda^{\text{fin}}$ satisfy the infinite extension property (see \cite[Definition 3.1]{OTW14}). In their setting, the infinite extension property is a condition equivalent to the density of $\Lambda^{\text{inf}}$ in $\Lambda$ (see \cite[Proposition 3.8]{OTW14}). %Here we take density of infinite sequences, rather than the infinite extension property, as part of the definition rather than as a consequence.
\end{rmk}

\begin{rmk} According to the above definition of two-sided shift spaces, even if $|\LA|<\infty$, we have that $\O$ is an accumulation point of the shift space whenever $|\Lambda|=\infty$. Hence two-sided shift spaces over finite alphabets, defined according to Definition \ref{2-sided_shifts}, do not coincide with classical shift spaces over finite alphabets.
\end{rmk}

\begin{prop}\label{closure_shift}
Let $X\subset\Sigma_A^{\Z\ \text{inf}}$ be a set such that $\s(X)=X$. Then $\bar X$ is a shift space, where $\bar X$ denotes the closure of $X$ in $\Sigma_A^\Z$.
\end{prop}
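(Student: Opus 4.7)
The plan is to verify the three conditions in Definition \ref{2-sided_shifts} one by one, and I expect all three to follow essentially directly from standard properties of closures together with facts already established in the excerpt.

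First, condition (1) is trivial: $\bar{X}$ is closed in $\Sigma_A^\Z$ by the definition of topological closure. For condition (2), I would use the fact, proved earlier, that the shift map $\sigma$ is a homeomorphism on $\Sigma_A^\Z$. Since homeomorphisms commute with the closure operator, we get $\sigma(\bar X) = \overline{\sigma(X)} = \bar X$, where the last equality uses the hypothesis $\sigma(X) = X$.

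The only condition that requires a moment's thought is (3), density of $\bar X^{\text{inf}}$ in $\bar X$. The key observation is that by hypothesis $X \subseteq \Sigma_A^{\Z\ \text{inf}}$, and clearly $X \subseteq \bar X$, so $X \subseteq \bar X \cap \Sigma_A^{\Z\ \text{inf}} = \bar X^{\text{inf}}$. Since $X$ is dense in $\bar X$ by the definition of closure, any superset of $X$ contained in $\bar X$ is also dense in $\bar X$, and therefore $\bar X^{\text{inf}}$ is dense in $\bar X$, as required.

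Since all three conditions are verified, $\bar X$ is a two-sided shift space. I don't anticipate any real obstacle here; the statement is essentially a bookkeeping observation that shift-invariant sets of infinite sequences generate shift spaces by closure, leveraging the fact that $\sigma$ is a homeomorphism (so shift-invariance passes to closures) and the trivial inclusion $X \subseteq \bar X^{\text{inf}}$ (so the density hypothesis is automatic).
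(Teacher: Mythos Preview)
Your proof is correct and follows essentially the same approach as the paper, which simply states that closedness and shift invariance of $\bar X$ are clear and then observes that $X\subset\bar X^{\text{inf}}$ is dense in $\bar X$. Your version merely spells out in more detail why shift invariance passes to the closure (via $\sigma$ being a homeomorphism), which the paper leaves implicit.
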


\begin{proof}
It is clear that $\bar X$ is closed and shift invariant. Furthermore $X\subset\bar X^\text{inf}$ and since $X$ is dense in $\bar X$, the result follows.

\end{proof}

\begin{rmk}
Note that in the previous proposition we can have $X$ being a proper subset of $\bar X^\text{inf}$. For example, if $a,b\in A$ are two distinct letters and $X:=\{(x_i)_{i\in\Z}:\ \exists k\in\Z\text{ such that } x_i=a\ \forall i\leq k \text{ and} \ x_i=b\ \forall i> k\}$, then we have that $\bar X^\text{inf}=X\cup\{(x_i)_{i\in\Z}:\ x_i=a\ \forall i\in\Z\}$.
\end{rmk}

Two-sided shift spaces can also be alternatively defined from a set of forbidden words in an analogous way to \cite[Definition 3.11]{OTW14}. However, in the two-sided case, in addition to forbidding subblocks, we need to consider the possibility that entire pasts are forbidden. %. The following result gives a two-sided version of \cite[Theorem 3.16]{OTW14}.

\begin{defn} Given $\mathbf{F}\subset A^{\Z^-}\cup\bigcup_{k\geq 1}A^k$, let
\begin{equation}\label{forbidden_words_2-sided}\begin{array}{c} X_{\mathbf{F}}^{\text{inf}} :=
\{ x \in \Sigma_A^{\Z\ \text{inf}}: \big[B_{\text{linf}}(\{x\})\cup B(\{x\})\big]\cap\mathbf{F}=\emptyset \}\\\\ \text{and}\\\\ X_{\mathbf{F}}^{\text{fin}} :=\left\{\begin{array}{lcl} %\{ x \in {\Sigma_A^{\Z\ \text{fin}}: |\F_1 (X_{\mathbf{F}}^{\text{inf}},(x_{i-l(x)})_{i\leq 0})|=\infty \}
\emptyset, &\ if& |X_{\mathbf{F}}^{\text{inf}}|<\infty,\\\\
\{ x \in \Sigma_A^{\Z\ \text{fin}}: |\F_1 (X_{\mathbf{F}}^{\text{inf}},(x_{i-l(x)})_{i\leq 0})|=\infty \}\cup\{\O\},&\ if& |X_{\mathbf{F}}^{\text{inf}}|=\infty.\end{array}\right.
\end{array}
\end{equation}
Define $X_{\mathbf{F}}:=X_{\mathbf{F}}^{\text{inf}}\cup X_{\mathbf{F}}^{\text{fin}}$.

\end{defn}

\begin{prop}\label{shiftforbiddenwords}  Given $\mathbf{F}\subset A^{\Z^-}\cup\bigcup_{k\geq 1}A^k$,  $X_{\mathbf{F}}$ is a shift space. Conversely, if $\Lambda\subseteq\Sigma_A^\Z$ is a shift
space then $\Lambda = X_{\mathbf{F}}$ for some $\mathbf{F}\subset A^{\Z^-}\cup\bigcup_{k\geq 1}A^k$.
\end{prop}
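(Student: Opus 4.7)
My plan is to split the proposition into the two natural halves and handle them separately. For the forward direction, I need to verify the three defining properties of a shift space for $X_{\mathbf{F}}$. For the converse, I will take $\mathbf{F}$ to be the set of finite and left-infinite words over $A$ that do not appear as subblocks of any element of $\Lambda$; that is,
\[
\mathbf{F} := \Bigl(A^{\Z^-}\cup\bigcup_{k\geq 1}A^k\Bigr)\setminus\bigl(B_{\text{linf}}(\Lambda)\cup B(\Lambda)\bigr),
\]
and show $\Lambda = X_{\mathbf{F}}$.

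For the forward direction, shift invariance is immediate from the fact that $\mathbf{F}$ is position-independent, and density of $X_{\mathbf{F}}^{\text{inf}}$ in $X_{\mathbf{F}}$ will follow from Lemma \ref{infdense2}, since the infinite extension property is baked into the definition of $X_{\mathbf{F}}^{\text{fin}}$. The main work is closedness. I would take $x\notin X_{\mathbf{F}}$ and produce a basic open neighborhood of $x$ disjoint from $X_{\mathbf{F}}$, splitting into cases: (a) if some finite or left-infinite subblock of $x$ lies in $\mathbf{F}$, then any cylinder $Z((x_i)_{i\leq n})$ that captures this forbidden word is disjoint from $X_{\mathbf{F}}$ (any finite element $y$ of such a cylinder also inherits the forbidden word, forcing $\F_1(X_{\mathbf{F}}^{\text{inf}},\text{past of }y)=\emptyset$, so $y\notin X_{\mathbf{F}}^{\text{fin}}$); (b) if $x\in\Sigma_A^{\Z\ \text{fin}}\setminus\{\O\}$ with finite follower set $F=\F_1(X_{\mathbf{F}}^{\text{inf}},(x_{i+l(x)})_{i\leq 0})$, then $Z(x,F)$ is the required neighborhood, because any infinite $y$ in it would force a symbol outside $F$ to lie in $F$, and any finite $y\neq\O$ in it would have past extending $x$'s and would inherit the same contradiction; (c) if $x=\O$ and $|X_{\mathbf{F}}^{\text{inf}}|<\infty$, Hausdorffness (equivalently, a complement of finitely many cylinders of the form $Z((y_i)_{i\leq 0})$, one per element of $X_{\mathbf{F}}^{\text{inf}}$) provides the neighborhood.

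For the converse direction, the containment $\Lambda\subseteq X_{\mathbf{F}}$ is straightforward: infinite elements of $\Lambda$ contain no forbidden words by construction, and for finite $x\in\Lambda\setminus\{\O\}$ the density of $\Lambda^{\text{inf}}$ together with Lemma \ref{infdense2} ensures $|\F_1(\Lambda,\cdot)|=\infty$, and since $\Lambda^{\text{inf}}\subseteq X_{\mathbf{F}}^{\text{inf}}$, we obtain $x\in X_{\mathbf{F}}^{\text{fin}}$. For $\O\in\Lambda$, density forces $|\Lambda^{\text{inf}}|=\infty$, hence $|X_{\mathbf{F}}^{\text{inf}}|=\infty$, hence $\O\in X_{\mathbf{F}}^{\text{fin}}$. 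The harder containment is $X_{\mathbf{F}}\subseteq\Lambda$, and this is where I expect the main obstacle to lie, since I need to invoke closedness of $\Lambda$ together with the neighborhood bases from Lemma \ref{nhoodbasis}. For $x\in X_{\mathbf{F}}^{\text{inf}}$, each basic neighborhood $Z((x_i)_{i\leq n})$ is nonempty in $\Lambda$ because the left-infinite block $(x_{i+n})_{i\leq 0}$ is not forbidden, so it is realized in some element of $\Lambda$, which by shift invariance can be chosen inside the cylinder; hence $x\in\overline{\Lambda}=\Lambda$. For $x\in X_{\mathbf{F}}^{\text{fin}}\setminus\{\O\}$, any basic neighborhood $Z(x,F)$ contains some infinite element of $X_{\mathbf{F}}^{\text{inf}}\subseteq\Lambda$ (using that $|\F_1(X_{\mathbf{F}}^{\text{inf}},\cdot)|=\infty$ while $F$ is finite), so again $x\in\overline{\Lambda}=\Lambda$. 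Finally, for $x=\O\in X_{\mathbf{F}}$, we have $X_{\mathbf{F}}^{\text{inf}}\subseteq\Lambda$ infinite, so Lemma \ref{infdense2} makes $\O$ an accumulation point of $\Lambda^{\text{inf}}$, placing $\O\in\overline{\Lambda}=\Lambda$.

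The subtlest step is verifying closedness of $X_{\mathbf{F}}$ in case (b) above, because one has to simultaneously rule out nearby infinite sequences and nearby finite sequences in $X_{\mathbf{F}}^{\text{fin}}$; the key observation making this work is that every finite $y\in Z(x,F)$ has its past extending $x$'s past, so its follower set of extensions in $X_{\mathbf{F}}^{\text{inf}}$ is contained (via the shared entry $y_{l(x)+1}\notin F$) in $\F_1(X_{\mathbf{F}}^{\text{inf}},\text{past of }x)\setminus F=\emptyset$. Once this bookkeeping is set up cleanly, the remaining arguments are routine manipulations of the cylinder bases.
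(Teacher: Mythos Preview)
Your proposal is correct and, for the forward direction, matches the paper's argument almost exactly (the paper organizes its cases by whether $l(x)$ is infinite or finite rather than by the reason $x\notin X_{\mathbf{F}}$, but the content is the same, including your case (b), which is precisely the paper's case 2(b)).

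For the converse you choose the same set $\mathbf{F}$ as the paper, and your core step---showing $X_{\mathbf{F}}^{\text{inf}}\subseteq\Lambda$ by observing that every left-infinite prefix of $x\in X_{\mathbf{F}}^{\text{inf}}$ lies in $B_{\text{linf}}(\Lambda)$ and then invoking closedness of $\Lambda$---is exactly the paper's argument. The difference is in the packaging: you proceed to verify both containments for finite elements and for $\O$ by hand, whereas the paper, having established $\Lambda^{\text{inf}}=X_{\mathbf{F}}^{\text{inf}}$, finishes in one line via
\[
\Lambda=\overline{\Lambda^{\text{inf}}}=\overline{X_{\mathbf{F}}^{\text{inf}}}=X_{\mathbf{F}},
\]
using density of the infinite part on both sides (for $\Lambda$ by hypothesis, for $X_{\mathbf{F}}$ by the first half of the proposition). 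Your route is more explicit and self-contained; the paper's shortcut avoids the separate treatment of finite points altogether.
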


\begin{proof}
We notice that the definition of $X_{\mathbf{F}}$ implies that $\s(X_{\mathbf{F}})=X_{\mathbf{F}}$. Furthermore, by  Lemma \ref{infdense2}, $X_{\mathbf{F}}^{\text{inf}}$  is dense in $X$. Hence, to prove the first claim,  we just need to show that $(X_{\mathbf{F}})^c$ is open, which is clear when $|X_{\mathbf{F}}^{\text{inf}}|<\infty$. So, assume that $|X_{\mathbf{F}}^{\text{inf}}|=\infty$ and let $x\notin X_{\mathbf{F}}$. We now have two cases:
\begin{enumerate}
\item If $l(x) = \infty$, then either:
	\begin{enumerate}
	\item There exists $f\in \mathbf{F}\cap A^{\Z^-}$ and $n\in \Z$ such that $f_i = x_i$, for all $i\leq n$, in which case we have that $x\in Z(f)\subseteq (X_{\mathbf{F}})^c$, or;
	\item There exists $f\in \mathbf{F}\cap \bigcup_{k\in\N}A^k$ such that $f_i = x_i$, for all $m\leq i\leq n$, for some $m,n\in\Z$, in which case we have that $x\in Z(\ldots x_m\ldots x_n)\subset (X_{\mathbf{F}})^c$.
	\end{enumerate}
\item If $-\infty<l(x)<\infty$ then we again have to consider two options:
	\begin{enumerate}
	\item If $x\notin B_{\text{linf}}(X^{\text{inf}}_{\mathbf{F}})$ then we have that $x\in Z(x)\subset (X_{\mathbf{F}})^c$.
	\item If instead we have that $x\in B_{\text{linf}}(X^{\text{inf}}_{\mathbf{F}})$, but $|\mathcal{F}_1(X^{\text{inf}}_{\mathbf{F}}, x)|< \infty$, then, setting $F:= \mathcal{F}_1(X^{\text{inf}}_{\mathbf{F}}, x)$, we obtain that $x\in Z(x,F)\subset (X_{\mathbf{F}})^c$.
	\end{enumerate}
\end{enumerate}
This shows that $(X_{\mathbf{F}})^c$ is open and so $X_{\mathbf{F}}$ is closed.

To prove the second part, given a shift space $\Lambda$ we must find a set $\mathbf{F}$ such that $\Lambda = X_{\mathbf{F}}$. To this end, take
$$
\mathbf{F} = \left[ A^{\Z^-}\cup\bigcup_{k\geq 1}A^k\right]\setminus \left[B_{\text{linf}}(\Lambda)\cup B(\Lambda)\right]
$$
We first show that $X_{\mathbf{F}}^{\text{inf}} = \Lambda^{\text{inf}}$. The inclusion $\Lambda^{\text{inf}}\subset X_{\mathbf{F}}^{\text{inf}}$ is trivial. To prove the other inclusion, take $y\in X_{\mathbf{F}}^{\text{inf}}$ and, by contradiction, suppose that $y\in \Lambda^c$. Since $\Lambda$ is closed, we can find an open set containing $y$ disjoint from $\Lambda$. From Lemma \ref{nhoodbasis}, we can find a prefix $z$ of $y$ such that $Z(z)$ is disjoint from $\Lambda$. This implies that $z$ cannot be in $B_{\text{linf}}(\Lambda)$ and so $z\in \mathbf{F}$, which contradicts $y\in X_{\mathbf{F}}^{\text{inf}}$. Hence $X_{\mathbf{F}}^{\text{inf}} = \Lambda^{\text{inf}}$.
Now, by Lemma \ref{infdense2}, we have that $\Lambda=\overline{\Lambda^{\text{inf}}}=\overline{X_{\mathbf{F}}^{\text{inf}}}=X_{\mathbf{F}}$.

\end{proof}

The last proposition directly implies the following corollary.

\begin{cor}\label{language+linf} Let $\Lambda,\Gamma\subseteq\Sigma_A^\Z$ be shift spaces. Then $\Lambda=\Gamma$ if and only if $\left[B_{\text{linf}}(\Lambda)\cup B(\Lambda)\right]=\left[B_{\text{linf}}(\Gamma)\cup B(\Gamma)\right]$.
\end{cor}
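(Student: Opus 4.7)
The forward direction is immediate from the definitions: if $\Lambda = \Gamma$, then clearly they share the same set of blocks and the same set of left-infinite subblocks, so $B_{\text{linf}}(\Lambda) \cup B(\Lambda) = B_{\text{linf}}(\Gamma) \cup B(\Gamma)$.

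For the converse, the plan is to invoke Proposition \ref{shiftforbiddenwords} with an explicit choice of forbidden set that depends only on $B_{\text{linf}}(\Lambda) \cup B(\Lambda)$. Specifically, assuming $B_{\text{linf}}(\Lambda)\cup B(\Lambda) = B_{\text{linf}}(\Gamma)\cup B(\Gamma)$, I would set
\[
\mathbf{F} := \left[A^{\Z^-}\cup\bigcup_{k\geq 1}A^k\right]\setminus\left[B_{\text{linf}}(\Lambda)\cup B(\Lambda)\right],
\]
which by hypothesis coincides with
\[
\left[A^{\Z^-}\cup\bigcup_{k\geq 1}A^k\right]\setminus\left[B_{\text{linf}}(\Gamma)\cup B(\Gamma)\right].
\]
Then the second part of the proof of Proposition \ref{shiftforbiddenwords} (taking exactly this complement as the forbidden set) gives simultaneously $\Lambda = X_{\mathbf{F}}$ and $\Gamma = X_{\mathbf{F}}$, whence $\Lambda = \Gamma$.

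There is no real obstacle here, since all the work has already been done in Proposition \ref{shiftforbiddenwords}; the corollary simply records the fact that the construction $\mathbf{F} \mapsto X_{\mathbf{F}}$ used there depends on $\Lambda$ only through the pair $B_{\text{linf}}(\Lambda)\cup B(\Lambda)$. Thus the whole argument is just a one-line application of the previous proposition, with the key observation being that equality of the languages together with equality of the left-infinite subblocks completely pins down the forbidden set used in the proof.
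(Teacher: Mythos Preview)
Your proof is correct and matches the paper's approach exactly: the paper simply states that the corollary follows directly from Proposition~\ref{shiftforbiddenwords} (marking it with a \qed), and your argument makes explicit precisely the observation that the forbidden set $\mathbf{F}$ constructed there depends on $\Lambda$ only through $B_{\text{linf}}(\Lambda)\cup B(\Lambda)$.
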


\qed

Note that for standard shift spaces over finite alphabets, as well as for Ott-Tomforde-Willis one-sided shift spaces, two shift spaces are equal if and only if they have the same language. Corollary \ref{language+linf} above gives an analogue of this result for our two-sided shift spaces, but we need to take in account not only the language, but also restrictions on infinitely many coordinates to the left.

Unlike the one-sided case, in our setting the shift map is continuous everywhere. In fact $\s:\Sigma_A^\Z\to \Sigma_A^\Z$ is a homeomorphism. Hence, in contrast to the one-sided case,  $(\Sigma_A^\Z, \s)$ may be analyzed as a topological dynamical system.

\begin{defn}\label{shift_conjugate}
Two shift spaces $\Lambda\subseteq\Sigma^\Z_A$ and $\Gamma\subseteq\Sigma^\Z_B$ are said to be topologically conjugate if the dynamical systems $(\Lambda,\s)$ and $(\Gamma,\s)$ are topologically conjugate, that is, if there exists a continuous invertible map $\varphi:\Lambda\to\Gamma$ such that $\varphi\circ\s=\s\circ\varphi$.
\end{defn}

Definition \ref{shift_conjugate} is the standard definition of equivalence in dynamical systems and should be contrasted with \cite[Definition 4.1]{OTW14}, where two one-sided shift spaces are said to be conjugate if there is a bijective continuous shift-commuting map between them which preserves length. It should be noted that we do not require that a conjugacy preserves length.

We will say that a given nonempty shift $\Lambda$ is a:
\begin{itemize}

\item \uline{{\sc shift of finite type}}: if $\Lambda=X_\mathbf{F}$ for some finite $\mathbf{F}$;

\item \uline{{\sc finite-step shift}}: if $\Lambda=X_\mathbf{F}$ for some $\mathbf{F}\subset \bigcup_{k\geq 1}A^k$. In the case that $\Lambda=X_\mathbf{F}$ for some $\mathbf{F}\subseteq A^{M+1}$, we will say that $\Lambda$ is a $M$-step shift, and in particular, if $M=1$ we also say that $\Lambda$ is a {\em Markov} (or {\em Markovian}) shift;

\item \uline{{\sc infinite-step shift}}: if it is not a finite-step shift;

\item \uline{{\sc edge shift}}: Let $G=(E,V,i,t)$ be a directed graph with no sources and sinks and define $\Lambda(G)\subseteq \Sigma_E^\Z$ as the closure of the set of all possible bi-infinite walks on $G$, that is,
\begin{equation}\label{edgeshift}\Lambda(G):=\overline{\left\{(e_i)_{i\in\Z}:\ t(e_i)=i(e_{i+1})\ \forall i\in\Z\right\}}.
\end{equation}
By Proposition \ref{closure_shift}, it follows that $\Lambda(G)$ is a shift space. In particular, one can check that $\Lambda(G)^{\text{inf}}$ coincides with the set of all possible bi-infinite walks on $G$, while $\Lambda(G)^{\text{fin}}$ can be identified with with all left infinite walks on $G$ which end on a vertex of infinite order (that is, a vertex which emits infinitely many edges).
  Furthermore, $\Lambda(G)=X_\mathbf{F}$, where $\mathbf{F}:=\{ef:\ t(e)\neq i(f)\}$;

\item \uline{{\sc row-finite shift}}: if for all $a\in\LA$ we have that $\F_1(\Lambda,a)$ is a finite set;

\item \uline{{\sc column-finite shift}}: if for all $a\in\LA$ we have that $\Pp_1(\Lambda,a)$ is a finite set.
\end{itemize}

For standard shift spaces (with finite alphabet) the class of finite-step shifts only contains $M$-step shifts, and the classes of shifts of finite type, finite-step shifts and edge shifts all coincide (of course, every subshift over a finite alphabet is both row-finite and column finite). However, this is not true when the alphabet is infinite \cite[Remark 5.19]{OTW14}. Furthermore, standard shift spaces and Ott-Tomforde-Willis shift spaces cannot be infinite-step shifts \cite[Theorem 3.16]{OTW14}.

\begin{rmk} If $\Lambda\subseteq \Sigma_A^\Z$ is a row-finite shift space then $\Lambda^{\text{fin}}\subset \{\O\}$ but, unlike the one sided shift setting, the converse may not be true. For example the shift space $X_\mathbf{F}$ defined in Examples \ref{minimal_examples}.b and \ref{minimal_examples}.c are not row finite but the only finite sequence contained in them is $\O$.
\end{rmk}

\section{Higher block shifts}\label{SEC:SBC}

\subsection{Sliding block codes}

We now turn to the natural problem of identifying the continuous shift-commuting maps between our shift spaces. In \cite{GoncalvesSobottkaStarling2015} we defined and classified such maps between one-sided shifts over infinite alphabets, and we follow a similar construction here. For this we shall use the notion of {\em finitely defined sets}.

\begin{defn}\label{pseudo_cylinders} Let $X\subset \Sigma_A^\Z$, let $k\leq\ell\in\Z$ be two integers, and let $b=(b_1\ldots b_{-k+\ell+1})\in B(\Sigma_A^\Z)$. A {\em pseudo cylinder} of $X$ is a set
$$[b]_{k}^\ell:=\{(x_i)_{i\in\Z}\in X: (x_{k}\ldots x_\ell)=b\}.$$

We say that the pseudo cylinder $[b]_{k}^\ell$ has {\em memory} $K$ and {\em anticipation} $L$, if $-K\leq \min \{0,k\}$ and $L\geq \max\{0,\ell\}$. The {\em least memory} of $[b]_{k}^\ell$ is $-\min \{0,k\}$, while the {\em least anticipation} of $[b]_{k}^\ell$ is
$\max\{0,\ell\}$.

We will adopt the convention that the empty set is a pseudo cylinder of $X$ whose memory and anticipation are zero.

\end{defn}

We remark that the definition of pseudo cylinder of $X$ corresponds to the definition of cylinders around the position 0 for standard shift maps over finite alphabets (for one-sided shifts, $k=0$). On the other hand, for a one-sided shift space $\Lambda$ cylinders of the form $Z(x)\cap\Lambda$, with $x\neq\O$, satisfy Definition \ref{pseudo_cylinders}.

\begin{prop}\label{finite_intersec-pcylinders} Let $[b]_{k}^{\ell}$ and $[c]_{m}^{n}$ be two pseudo cylinders in $X\subset\Sigma_A^\Z$. Then $[b]_{k}^{\ell}\cap[c]_{m}^{n}$ is a union of pseudo cylinders in $X$.
\end{prop}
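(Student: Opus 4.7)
The plan is to do a straightforward case analysis on the relative positions of the two index intervals $[k,\ell]$ and $[m,n]$, with the core idea being that specifying two disjoint finite windows of coordinates is equivalent to taking a union over all possible fillings of the gap between them.

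Without loss of generality, I would assume $k\le m$. Then I would split into two cases depending on whether the windows overlap. In the overlapping case $m\le\ell$, I would compare the relevant coordinates of $b$ and $c$: if $b_{i-k+1}\neq c_{i-m+1}$ for some $i\in [m,\min\{\ell,n\}]$, then the intersection is empty (itself a pseudo cylinder by the stated convention); otherwise the two blocks glue into a single block $d$ on the interval $[k,\max\{\ell,n\}]$ and the intersection equals the single pseudo cylinder $[d]_k^{\max\{\ell,n\}}$. In the non-overlapping case $m>\ell$, the key identity I would write down is
\[
[b]_k^{\ell}\cap[c]_m^{n} \;=\; \bigcup_{w\in \tilde{A}^{m-\ell-1}} [b\,w\,c]_k^{n},
\]
where the right-hand side ranges over all fillings of the gap coordinates $\ell+1,\ldots,m-1$ by letters of $\tilde A$. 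Many of these pseudo cylinders will be empty (for example, those where some $w_i=\o$ but $w_{i+1}\neq\o$, or those whose filling never occurs as a sub-block of a point of $X$), but each of them is still a pseudo cylinder of $X$ under the stated convention, so this expresses the intersection as a union of pseudo cylinders.

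I would then briefly note the two small subcases where the intervals are adjacent ($m=\ell+1$), in which case no gap filling is needed and the intersection is a single pseudo cylinder $[bc]_k^n$; and also allow for $k$, $\ell$, $m$, $n$ to be negative or the windows to be degenerate, since the same arguments apply without change.

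There is no real obstacle; the only place where one must be slightly careful is the bookkeeping in the overlapping case (making sure the comparison is made precisely on the intersection $[m,\min\{\ell,n\}]$ of the two index windows, with the subcase $n<\ell$ also producing a single combined block). Since nothing in the argument interacts with the topology or with the special point $\O$, the conclusion holds for arbitrary $X\subset \Sigma_A^\Z$ as stated.
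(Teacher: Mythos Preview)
Your proposal is correct and follows essentially the same approach as the paper's proof: a case analysis on the relative positions of the index windows, with overlapping windows yielding a single glued block and disjoint windows yielding a union over all fillings of the gap. The only cosmetic differences are that the paper normalizes by assuming $\ell\le n$ rather than $k\le m$, and in the disjoint case it indexes the union over $\Omega=\{f\in B_{m-\ell-1}(\Sigma_A^\Z):\ bfc\in B(\Sigma_A^\Z)\}$ rather than over all of $\tilde A^{m-\ell-1}$; your version simply absorbs the invalid fillings as empty pseudo cylinders, which is fine under the stated convention.
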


\begin{proof}
Suppose $[b]_{k}^{\ell}\cap[c]_{m}^{n}\neq\emptyset$, since if not, then the result is proved. Without loss of generality we can assume $\ell\leq n$ and therefore we have three cases:

\begin{description}

\item[$m\leq k\leq \ell\leq n$:] In this case $[c]_{m}^{n}\subset [b]_{k}^{\ell}$ and then $[b]_{k}^{\ell}\cap[c]_{m}^{n}=[c]_{m}^{n}$.

\item[$k< m\leq \ell\leq n$:] In this case $[b]_{k}^{\ell}\cap[c]_{m}^{n}=[d]_{k}^{n}$, where $d=b$ if $\ell=n$, while in the case $\ell<n$ the word $d$ is obtained by concatenating the word $b=(b_1\ldots b_{\ell-k+1})$ with $(c_{\ell-m+2}\ldots c_{n-m+1})$.

\item[$k\leq \ell< m\leq n$:] In this case, if $m-\ell=1$, then $[b]_{k}^{\ell}\cap[c]_{m}^{n}=[d]_{k}^{n}$, where $d$ is the concatenation of $b$ and $c$. On the other hand, if  $m-\ell>1$, then taking $\Omega:=\{f\in B_{m-\ell-1}(\Sigma_A^\Z):\ bfc \in B(\Sigma_A^\Z)\}$, it follows that $$[b]_{k}^{\ell}\cap[c]_{m}^{n}=\bigcup_{f\in\Omega}[bfc]_{k}^{n}.$$

\end{description}

\end{proof}

\begin{prop}\label{intersec-pcylinders} Let $\{[b^i]_{k_i}^{\ell_i}\}_{i\in\N}$ be a family of pseudo cylinders in $X\subset\Sigma_A^\Z$ such that, for all $b^i=(b^i_1\ldots b^i_{-k_i+\ell_i+1})$ we have that $b^i_{-k_i+\ell_i+1}\neq \o$. Then
\begin{enumerate}

\item[(i)] $\bigcup_{i\in \N} [b^i]_{k_i}^{\ell_i}$ is an open subset of $X$;

\item[(ii)] If there exists $L\in\N$ such that $\ell_i\leq L$ for all $i\in\N$, then $\bigcap_{i\in N} [b^i]_{k_i}^{\ell_i}$ is an open subset of $X$.

\end{enumerate}
\end{prop}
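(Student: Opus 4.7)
My strategy for both parts is to produce, around each point of the claimed open set, a basic open set from $B_{\Sigma^\Z_A}$ whose intersection with $X$ is contained in the target set. The key observation I will exploit is that the hypothesis $b^i_{-k_i+\ell_i+1}\neq \o$ says the last letter specified by $b^i$ lies in $A$, and since by definition an element of $\Sigma_A^\Z$ never has $\o$ preceding a non-$\o$ entry, every $y\in [b^i]_{k_i}^{\ell_i}$ then satisfies $y_j\in A$ for all $j\leq \ell_i$. In particular, $(y_j)_{j\leq \ell_i}$ corresponds (under the standard identification) to an element of $\Sigma_A^{\Z\ \text{fin}}\setminus\{\O\}$, so $Z\bigl((y_j)_{j\leq \ell_i}\bigr)$ is a legitimate basic open set.

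For (i), it suffices to show each pseudo cylinder $[b^i]_{k_i}^{\ell_i}$ is open in $X$. Given $y\in [b^i]_{k_i}^{\ell_i}$, I set $x := (y_j)_{j\leq \ell_i}$ and note that any $z\in Z(x)\cap X$ satisfies $z_j = y_j$ for all $j \leq \ell_i$, and in particular $(z_{k_i},\ldots,z_{\ell_i})=b^i$. For (ii), the uniform bound $\ell_i\leq L$ turns $\sup_i \ell_i$ into an actual maximum $M$ attained by some index $i^*$, since a nonempty set of integers bounded above always has a maximum. If the intersection is empty it is trivially open; otherwise, I pick $y$ in it and apply the observation to $[b^{i^*}]_{k_{i^*}}^M$ to set $x := (y_j)_{j\leq M}$. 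Since $k_i\leq \ell_i\leq M$ for every $i$, any $z\in Z(x)\cap X$ agrees with $y$ on each of the positions $k_i,\ldots,\ell_i$, and hence lies in every $[b^i]_{k_i}^{\ell_i}$.

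The main point of care, rather than a genuine obstacle, is recognising the role of the two hypotheses. The ``last letter not $\o$'' condition is what lets the pseudo cylinder be captured by a generalized cylinder $Z(x)$, since the basis elements $Z(x,F)$ only exclude finitely many letters of $A$ at the coordinate $l(x)+1$ and can never force a coordinate to equal $\o$. The uniform bound in (ii) is equally essential: an infinite intersection with $\ell_i\to +\infty$ would amount to prescribing infinitely many coordinates to the right of $0$, which no single element of $B_{\Sigma^\Z_A}$ can describe, and in that generality the statement would fail.
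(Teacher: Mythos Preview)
Your proof is correct and follows essentially the same approach as the paper's: for (i) you show each pseudo cylinder is open by exhibiting a generalized cylinder $Z((y_j)_{j\leq\ell_i})$ around each of its points, and for (ii) you use the uniform bound to take $M=\max_i\ell_i$ and show $Z((y_j)_{j\leq M})\cap X$ sits inside the intersection. The paper's version is terser---it declares (i) ``direct'' and for (ii) writes only the containment $Z((x_n)_{n\leq\max\{\ell_i\}})\cap X\subset\bigcap_i[b^i]_{k_i}^{\ell_i}$---but your added detail (in particular, your explicit use of the hypothesis $b^i_{-k_i+\ell_i+1}\neq\o$ to ensure the left-infinite prefix is a legitimate element of $\Sigma_A^{\Z\ \text{fin}}\setminus\{\O\}$) only makes the argument clearer.
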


\begin{proof}

Item $(i)$ is direct, since each pseudo cylinder $[b^i]_{k_i}^{\ell_i}$ is an open subset of $X$.

To prove $(ii)$, notice that if $\big(\bigcap_{i\in N} [b^i]_{k_i}^{\ell_i}\big)\cap X=\emptyset$ we are done. So, suppose $\bigcap_{i\in N} [b^i]_{k_i}^{\ell_i}\neq\emptyset$. Then, for all $x\in\bigcap_{i\in N} [b^i]_{k_i}^{\ell_i}$, it follows that $$Z\big((x_n)_{n\leq \max\{\ell_i\}}\big)\cap X\subset\bigcap_{i\in N} [b^i]_{k_i}^{\ell_i}.$$

\end{proof}

We now define the subsets of a shift space from which we will construct our sliding block codes.

\begin{defn}\label{finitely_defined_set}
Given $C\subset X\subset \Sigma_A^\Z$, we will say that $C$ is {\em finitely defined in $X$} if there exist two collections of pseudo cylinders of $X$, $\{[b^i]_{k_i}^{\ell_i}\}_{i\in I}$ and $\{[d^j]_{m_j}^{n_j}\}_{j\in J}$,  such that

\begin{equation}\label{eq:finitely_defined_set}\begin{array}{l}
C=\bigcup_{i\in I} [b^i]_{k_i}^{\ell_i},\\\\
C^c= \bigcup_{j\in J}[d^j]_{m_j}^{n_j}.\end{array}
\end{equation}
\end{defn}

We recall that Definition \ref{finitely_defined_set} above is equivalent to  Definition 3.1 in \cite{GoncalvesSobottkaStarling2015} by taking $k=0$. In other words, $C$ is a finitely defined set of $X$ if and only if for each $x\in X$ there exist $k,\ell\in\N$ such that the knowledge of $(x_{-k}\ldots x_\ell)$ allows us to know whether $x$ belongs or not to $C$. In particular, $X$ and $\emptyset$ are finitely defined sets of $X$. We note that, in contrast to the one-sided case of \cite{GoncalvesSobottkaStarling2015}, the set $\{\O\}$ is {\em not} finitely defined, since knowing that a finite number of entries of a sequence are all $\o$ is not enough to guarantee that it is equal to $\O$.

Note that if $C$ is a finitely defined set of $X$, then there exist infinitely many ways to write $C$ and $C^c$ as union of pseudo cylinders of $X$.

\begin{defn}\label{memory_and_anticipation}
Let $C$ be a finitely defined set in $X$. Then we say that $C$ has {\em memory} $k$ and {\em anticipation} $\ell$ if there exists a collection  $\{[b^i]_{k_i}^{\ell_i}\}_{i\in I}$, which satisfies \eqref{eq:finitely_defined_set}, with

\begin{equation}\label{eq:memory_and_anticipation}\begin{array}{l}-k\leq \inf \{0,k_i:\ i\in I\} \\\\\ell\geq \sup\{0,\ell_i:\ i\in I\}.\end{array}\end{equation}

We say that $C$ has {\em least memory} $k$ and {\em least anticipation} $\ell$ in the case that $k$ and $\ell$ are the least integers for which \eqref{eq:memory_and_anticipation} holds for some collection of pseudo cylinders satisfying \eqref{eq:finitely_defined_set}.

If for any collection $\{[b^i]_{k_i}^{\ell_i}\}_{i\in I}$ of pseudo cylinders satisfying \eqref{eq:finitely_defined_set} we have that $\inf_{i\in I} k_i=-\infty$ or $\sup_{i\in I} \ell_i=\infty$ then we say that $C$ has {\em unbounded memory} or {\em unbounded anticipation}, respectively.
\end{defn}

We now show that the class of finitely defined sets is closed under taking finite unions and intersections.

\begin{prop}\label{fdunion}
A finite union or intersection of finitely defined sets is also a finitely defined set.
\end{prop}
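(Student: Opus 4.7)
The plan is to verify both directions (union and intersection) by direct manipulation of the two unions-of-pseudo-cylinders representations afforded by the definition of a finitely defined set, relying on Proposition \ref{finite_intersec-pcylinders} as the sole nontrivial tool, and then extend from two sets to any finite collection by induction. Throughout, suppose $C_1,C_2\subset X$ are finitely defined, so that we have
\[
C_1=\bigcup_{i\in I} [b^i]_{k_i}^{\ell_i},\quad C_1^c=\bigcup_{j\in J}[d^j]_{m_j}^{n_j}, \quad C_2=\bigcup_{p\in P} [e^p]_{r_p}^{s_p},\quad C_2^c=\bigcup_{q\in Q}[f^q]_{u_q}^{v_q}.
\]

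First I would handle the union. The set $C_1\cup C_2$ is immediately a union of pseudo cylinders of $X$, namely $\bigcup_{i\in I}[b^i]_{k_i}^{\ell_i}\cup\bigcup_{p\in P}[e^p]_{r_p}^{s_p}$, so there is nothing to do on that side. For the complement one uses De Morgan to write
\[
(C_1\cup C_2)^c=C_1^c\cap C_2^c=\bigcup_{(j,q)\in J\times Q}\Bigl([d^j]_{m_j}^{n_j}\cap [f^q]_{u_q}^{v_q}\Bigr),
\]
and then invokes Proposition \ref{finite_intersec-pcylinders} to rewrite each summand $[d^j]_{m_j}^{n_j}\cap [f^q]_{u_q}^{v_q}$ as a union of pseudo cylinders of $X$. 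Substituting these expressions in realises $(C_1\cup C_2)^c$ as a union of pseudo cylinders, so $C_1\cup C_2$ is finitely defined.

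The intersection case follows by the symmetric argument: $C_1\cap C_2=\bigcup_{(i,p)\in I\times P}\bigl([b^i]_{k_i}^{\ell_i}\cap [e^p]_{r_p}^{s_p}\bigr)$, which is a union of pseudo cylinders by Proposition \ref{finite_intersec-pcylinders}, whereas $(C_1\cap C_2)^c=C_1^c\cup C_2^c$ is evidently a union of pseudo cylinders. Finally, a straightforward induction on the number of sets extends both statements from two to any finite collection $C_1,\dots,C_n$, using the identities $\bigcup_{i=1}^{n}C_i=\bigl(\bigcup_{i=1}^{n-1}C_i\bigr)\cup C_n$ and $\bigcap_{i=1}^{n}C_i=\bigl(\bigcap_{i=1}^{n-1}C_i\bigr)\cap C_n$.

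I do not anticipate a genuine obstacle here; the argument is a routine set-theoretic bookkeeping exercise. The only mild subtlety is that one cannot bound the memory and anticipation of the resulting finitely defined set in terms of those of the $C_i$ (the rewriting in Proposition \ref{finite_intersec-pcylinders} can enlarge the coordinate windows, and the case $k\leq\ell<m\leq n$ there even widens the block), but the definition of finitely defined set does not demand bounded memory/anticipation, so this does not affect the conclusion.
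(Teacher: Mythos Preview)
Your proposal is correct and follows essentially the same approach as the paper: both arguments express the union (resp.\ intersection) directly as a union of pseudo cylinders, and handle the complement via De Morgan together with Proposition~\ref{finite_intersec-pcylinders}. The only cosmetic difference is that the paper works with $n$ sets at once while you treat two sets and then induct; your indexing over $J\times Q$ is in fact cleaner than the paper's swap of the union and intersection.
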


\begin{proof}
Let $\{C_p\}_{p=1,\ldots,n}\subset X\subset\Sigma_A^\Z$ be a collection of finitely defined subsets of $X$.

Let us prove that  $K:=\bigcup_{p=1}^n C_p$ is a finitely defined set in $X$. For each $p=1,\ldots,n$ let $\{[b^{p,i}]_{k_i^p}^{\ell_i^p}\}_{i\in I_p}$ and $\{[d^{p,j}]_{m_j^p}^{n_j^p}\}_{i\in J_p}$ be two collections of pseudo cylinders for which $C_p=\bigcup_{i\in I_p}[b^{p,i}]_{k_i^p}^{\ell_i^p}$ and $C_p^c=\bigcup_{j\in J_p}[d^{p,j}]_{m_j^p}^{n_j^p}$. Clearly, $K$ is a union of pseudo cylinders. On the other hand,

$$K^c= \left(\bigcup_{p=1}^n C_p\right)^c=\bigcap_{p=1}^n C_p^c= \bigcap_{p=1}^n
  \bigcup_{j\in J_p}[d^{p,j}]_{m_j^p}^{n_j^p}= \bigcup_{j\in J_p} \bigcap_{p=1}^n [d^{p,j}]_{m_j^p}^{n_j^p}
$$
and since Proposition \ref{finite_intersec-pcylinders} assures that a finite intersection of pseudo cylinders can be written as an union of pseudo cylinders, we have that $K^c$ is a union of pseudo cylinders, proving that $K$ is a finitely defined set.

To prove that $\bigcap_{p=1}^n C_p$ is finitely defined, we just need to observe that $\left(\bigcap_{p=1}^n C_p\right)^c=\bigcup_{p=1}^n C_p^c$ and, since each $C_p^c$ is a finitely defined set, the result follows from the first part of this proof.
\end{proof}

\begin{prop} A non-empty shift space $\Lambda\subsetneq\Sigma_A^\Z$ is never a finitely defined set of $\Sigma_A^\Z$.
\end{prop}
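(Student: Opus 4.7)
The plan is to argue by contradiction. Suppose $\Lambda$ is a non-empty proper shift space that is finitely defined in $\Sigma_A^\Z$, and write
\[
\Lambda = \bigcup_{i} [b^i]_{k_i}^{\ell_i}, \qquad \Lambda^c = \bigcup_{j} [d^j]_{m_j}^{n_j},
\]
where both unions are non-empty. The goal is to exhibit a single sequence $z \in \Sigma_A^\Z$ belonging to both $\Lambda$ and $\Lambda^c$.

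First I would locate suitable witnesses on each side of the partition. Density of $\Lambda^{\text{inf}}$ in $\Lambda$ (part 3 of Definition \ref{2-sided_shifts}) provides some $x\in\Lambda^{\text{inf}}$, and $x$ lies in a pseudo cylinder $[b]_{k}^{\ell}\subseteq\Lambda$ from the decomposition; since $x$ has no $\o$ entries, neither does the block $b$. For the complementary side I need an infinite sequence in $\Lambda^c$, which I would obtain from the fact that $\Lambda^c$ is a non-empty open set together with the topological observation that every non-empty open subset of $\Sigma_A^\Z$ meets $\Sigma_A^{\Z\ \text{inf}}$. The latter is verified by inspecting the basic neighborhoods from Lemma \ref{nhoodbasis}: inside any cylinder $Z(w,F)$ one can complete a valid infinite sequence by choosing a letter in the non-empty set $A\setminus F$ and filling later coordinates with letters of $A$, while inside a complement $\bigl[\bigcup_{r=1}^{n} Z(w^r)\bigr]^c$ one chooses, at each coordinate of the form $l(w^r)$, a letter of $A$ different from the finitely many $w^r_{l(w^r)}$, using that $A$ is infinite. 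This produces $y\in\Lambda^c\cap\Sigma_A^{\Z\ \text{inf}}$ lying in some $[d]_m^n\subseteq\Lambda^c$, with $d$ also containing only letters of $A$.

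Next, I would exploit shift-invariance to separate the two windows. Since $\sigma(\Lambda)=\Lambda$, also $\sigma(\Lambda^c)=\Lambda^c$, and a direct calculation gives $\sigma^s([d]_m^n)=[d]_{m-s}^{n-s}$; applying $\sigma^s$ to the decomposition of $\Lambda^c$ therefore yields $[d]_{m-s}^{n-s}\subseteq\Lambda^c$ for every $s\in\Z$. Pick $s$ large enough that $n-s<k$, making the windows $[m-s,n-s]$ and $[k,\ell]$ disjoint. Fix any $a_0\in A$ and define $z\in\Sigma_A^{\Z\ \text{inf}}$ by setting $z|_{[k,\ell]}=b$, $z|_{[m-s,n-s]}=d$, and $z_i=a_0$ for all remaining indices. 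Because neither $b$ nor $d$ contains $\o$ and the two windows are disjoint, this $z$ is a legitimate element of $\Sigma_A^\Z$. By construction $z\in [b]_k^\ell\subseteq\Lambda$ and simultaneously $z\in [d]_{m-s}^{n-s}\subseteq\Lambda^c$, giving the contradiction.

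The step I expect to be the most delicate is ensuring that $\Lambda^c$ actually contains an infinite sequence, so that the block $d$ is free of the empty symbol and can be safely spliced with $b$. If $d$ carried an $\o$ in some position, the tail-$\o$ condition defining $\Sigma_A^\Z$ would force $z$ to equal $\o$ throughout the window $[k,\ell]$ as well, making the construction collapse. Everything else --- the computation $\sigma^s([d]_m^n)=[d]_{m-s}^{n-s}$, the choice of large $s$, and the verification that $z$ lies in both pseudo cylinders --- is then routine.
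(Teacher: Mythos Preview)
Your argument is correct. The contradiction you construct is clean: once you have pseudo cylinders $[b]_k^\ell\subseteq\Lambda$ and $[d]_m^n\subseteq\Lambda^c$ with $b,d$ free of $\o$, shift-invariance lets you slide the second window away from the first and splice the two patterns into a single infinite sequence. The one point worth streamlining is your justification that $\Lambda^c$ meets $\Sigma_A^{\Z\ \text{inf}}$: rather than inspecting basic neighborhoods, you can simply observe that $\Sigma_A^{\Z\ \text{inf}}$ is dense in $\Sigma_A^\Z$ (this is Lemma~\ref{infdense2} applied to the full shift, or the full shift being itself a shift space), and $\Lambda^c$ is a non-empty open set.

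By contrast, the paper's own proof is essentially a one-sentence heuristic: it writes $\Lambda=X_{\mathbf F}$ and asserts that deciding membership requires checking all subblocks of $x$ against $\mathbf F$, which cannot be done from a finite window $(x_{-k}\ldots x_\ell)$. This captures the intuition but does not, strictly speaking, exhibit a specific point that defeats every finite window, nor does it explain how the hypotheses $\Lambda\neq\emptyset$ and $\Lambda\neq\Sigma_A^\Z$ enter. Your approach supplies exactly what that sketch is missing: an explicit $z$ witnessing that no pseudo-cylinder decomposition can separate $\Lambda$ from its complement. The trade-off is that the paper's version is short and conveys the idea immediately, while yours is longer but actually a proof.
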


\begin{proof}
Suppose $\Lambda=X_\mathbf{F}$, for some $\mathbf{F}\subset A^{\Z^-}\cup\bigcup_{k\geq 1}A^k$. To check whether some point $x\in\Sigma_A^\Z$ belongs or not to $\Lambda$ we need to check if any subblock of $x$ belongs to $\mathbf{F}$, which we cannot do with only knowledge of $(x_{-k}\ldots x_\ell)$ for some $k,\ell\in\N$.

\end{proof}

Using Proposition \ref{fdunion} we can now define a natural class of shift-commuting maps between two-sided shift spaces.
\begin{defn}\label{defn_sliding block code}
Let $A$ and $B$ be alphabets and $\Lambda \subset \Sigma_A^\Z$ and $\Gamma \subset \Sigma_B^\Z$ be shift spaces. Suppose that $\{C_a\}_{a\in \LAG\cup\{\o\}}$ is a pairwise
disjoint partition of $\Lambda$, such that:
\begin{description}\addtolength{\itemsep}{-0.5\baselineskip}

\item[\em 1.] for each $a\in \LAG\cup\{\o\}$ the set $C_a$ is finitely defined in $\Lambda$, and

\item[\em 2.] $C_{\o}$ is shift invariant (that is, $\s(C_{\o})\subset C_{\o}$).

\end{description}

We will say that a map $\Phi:\Lambda\to\Gamma$ is a {\em sliding block code} if

\begin{equation}\label{LR_block_code}\bigl(\Phi(x)\bigr)_n=\sum_{a\in \LAG\cup\{\o\}}a\mathbf{1}_{C_a}\circ\sigma^{n}(x),\quad \forall x\in\Lambda,\ \forall n\in\Z, \end{equation} where $\mathbf{1}_{C_a}$ is the
characteristic function of the set $C_a$ and $\sum$ stands for the symbolic sum.

For a sliding block code $\Phi$ let $k_a$ and $\ell_a$ be the memory and the anticipation of each $C_a$, respectively and let $k:=\sup_{a\in L_\gamma\cup\{\o\}}k_a$ and  $\ell:=\sup_{a\in
L_\gamma\cup\{\o\}}\ell_a$. If $k,\ell<\infty$ we will say that $\Phi$ is a $k+\ell+1$-block code with memory $k$ and anticipation $\ell$, while if $k=\infty$ or $\ell=\infty$, we will say that $\Phi$ has unbounded memory or anticipation, respectively.
\end{defn}

Intuitively speaking, $\Phi:\Lambda\to \Gamma$ is a sliding block code if for all $n\in\Z$ and $x\in\Lambda$, there exist $k,\ell\geq 0$ such that we only need to know $(x_{n-k}x_{n-k+1} \ldots x_{n+\ell})$ to determine $\bigl(\Phi(x)\bigr)_n$. We remark that $k$ and $\ell$ above do not depend on the value of $n$, but do depend on the configuration of $x$ around $x_n$.

We will also say that sliding block codes are maps given by a {\em local rule} (which is implicitly given by \eqref{LR_block_code} and, in some cases, can be given explicitly). Also, note that the sets $C_a$ in the definition of sliding block codes can be written as $C_a=\Phi^{-1}\big([a]_0^0\big)$, where for each $a\in \LAG\cup\{\o\}$, $[a]_0^0$ is the pseudo cylinder of $\Gamma$ consisting of all elements that have the letter $a$ at coordinate zero.

We now give some examples and nonexamples of sliding block codes.

\begin{ex}\label{sliding block code_examp}

a) If $\Lambda$ is any shift space then the shift map $\s:\Lambda\to\Lambda$ is a continuous and invertible sliding block code with memory 0 and anticipation 1.
Its local rule is given by $$\bigl(\s(x)\bigr)_n=\sum_{a\in \LA\cup\{\o\}}a\mathbf{1}_{C_a}\circ\sigma^{n}(x),$$
where $C_a=\{x\in\Lambda:\ x_1=a\}=\bigcup_{b\in\LA}[ba]_0^1$ for all $a\in\LA\cup\{\o\}$.

b) Let
$A=\N$ and let $\Phi:\Sigma_A^\Z\to\Sigma_A^\Z$
be given by
$$\big(\Phi(x)\big)_n:=\sup_{j\leq n} x_j,\hspace{1cm}\text{for all }x\in \Sigma_A^\Z,  n\in\Z,$$
with the convention that $\o> a$ for all $a\in A$.
It is immediate that $\Phi$ is not a sliding block code, but $\Phi$ is continuous and shift commuting.

c) Let $A=\Z$ and consider the shift space $\Lambda:=\{(x_i)_{i\in\Z}\in\Sigma_A^\Z:\ x_{i+1}\geq x_i\ \forall i\in\Z\}$, with the convention that $\o>a$ for all $a\in A$. Define $\Phi:\Lambda\to\Lambda$ by

$$\big(\Phi(x)\big)_n:=\begin{cases} \o &\text{if } x_n=\o,\\ x_{x_n}& \text{otherwise.}\end{cases}$$
Then $\Phi$ is a sliding block code with unbounded memory and anticipation.

d)  Let $A=\N$ and define $\Phi:\Sigma_A^\Z\to \Sigma_A^\Z$ by
$$\big(\Phi(x)\big)_n:=\begin{cases} (x_n+1)/2 &\text{if } x_n \text{ is odd},\\
                                           x_n /2 &\text{if } x_n \text{ is even}\\
                                           \o      &\text{if } x_n =\o.\end{cases}$$
Then $\Phi$ is a sliding block code which is onto but not one-to-one. Furthermore, $\Phi$ is not continuous. Indeed, consider the sequence $(x^i)_{i\geq 1}$ where  $x_{-i}^i=2$ and $x_n^i=1$ if $n\neq -i$. Then $x^i$ converges to $\O$, but $\Phi(x^i)=(\ldots 111\ldots)$ for all $i\geq 1$.

\end{ex}

In \cite[Theorems 3.16 and 3.17]{GoncalvesSobottkaStarling2015} we gave necessary and sufficient conditions under which one can obtain an analogue of the Curtis-Hedlund-Lyndon Theorem for Ott-Tomforde-Willis one-sided shift spaces. However, due to the features of the topology assumed here for two-sided shift spaces of $\Sigma_A^\Z$, we cannot use the techniques of  \cite{GoncalvesSobottkaStarling2015} to obtain a similar result. For instance, in the one-sided case the shift map is only continuous at $\O$ for column-finite shifts (which is explained by  \cite[Theorem 3.16]{GoncalvesSobottkaStarling2015}), while in the two-sided case the shift map is always continuous at $\O$. Example \ref{sliding block code_examp}.d is another example which illustrates the fact that the Curtis-Hedlund-Lyndon Theorems obtained in \cite{GoncalvesSobottkaStarling2015} cannot be applied for two-sided shift spaces. In fact, while for the one-sided case a sliding block code $\Phi:\Sigma_A^\N\to\Sigma_A^\N$ defined with the same rule as Example \ref{sliding block code_examp}.d  is continuous everywhere, for the two-sided case it is not continuous at $\O$.

One can easily prove that sliding block codes are always shift commuting. In fact, the following results and their proofs are analogous to Proposition 3.12 and Corollaries 3.13--3.15 in \cite{GoncalvesSobottkaStarling2015}.

\begin{prop}\label{sliding block code->shift_commuting} Any sliding block code commutes with the shift map.
\end{prop}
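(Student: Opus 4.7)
The proof is essentially a direct computation from the defining formula \eqref{LR_block_code}, so my plan is brief: unwind both sides at an arbitrary coordinate $n$ and use that $\sigma^n \circ \sigma = \sigma^{n+1}$.

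First I would fix $x \in \Lambda$ and $n \in \Z$ and compute $\bigl(\Phi(\sigma(x))\bigr)_n$ using the local rule. Substituting $\sigma(x)$ into \eqref{LR_block_code} yields
\[
\bigl(\Phi(\sigma(x))\bigr)_n \;=\; \sum_{a\in \LAG \cup \{\o\}} a\, \mathbf{1}_{C_a} \circ \sigma^{n}(\sigma(x)) \;=\; \sum_{a\in \LAG \cup \{\o\}} a\, \mathbf{1}_{C_a} \circ \sigma^{n+1}(x).
\]
Then I would compute the other side: by definition of the shift map on $\Gamma$,
\[
\bigl(\sigma(\Phi(x))\bigr)_n \;=\; \bigl(\Phi(x)\bigr)_{n+1} \;=\; \sum_{a\in \LAG \cup \{\o\}} a\, \mathbf{1}_{C_a} \circ \sigma^{n+1}(x),
\]
where the second equality is just \eqref{LR_block_code} applied at coordinate $n+1$. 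Comparing, the two expressions are identical, so $\bigl(\Phi(\sigma(x))\bigr)_n = \bigl(\sigma(\Phi(x))\bigr)_n$ for every $n \in \Z$, whence $\Phi \circ \sigma = \sigma \circ \Phi$.

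There is no real obstacle here; the only subtlety worth commenting on is that the formula for $\bigl(\Phi(x)\bigr)_n$ is well-defined precisely because $\{C_a\}_{a \in \LAG \cup \{\o\}}$ is a partition of $\Lambda$, so for each $n$ exactly one characteristic function in the sum equals $1$ at $\sigma^n(x)$. The shift invariance of $C_{\o}$ (condition 2 of Definition \ref{defn_sliding block code}) plays no role in this commutation; it is needed elsewhere (to guarantee that $\Phi(x)$ indeed lies in $\Sigma_B^\Z$ when $\Phi(x)$ has a $\o$ entry). Thus the whole argument is the two-line display above, and no further work is needed.
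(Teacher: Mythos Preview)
Your proof is correct and is exactly the standard direct computation the paper has in mind; indeed, the paper does not write out a proof at all but simply remarks that the argument is analogous to \cite[Proposition 3.12]{GoncalvesSobottkaStarling2015}, which is precisely this two-line verification from the defining formula \eqref{LR_block_code}.
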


\begin{cor}\label{sliding block code->preserves_period}
If $\Phi:\Lambda\to\Gamma$ is a sliding block code and $x\in\Lambda$ is a sequence with period $p\geq 1$ (that is, $\s^p(x)=x$), then $\Phi(x)$ has also period $p$.
\end{cor}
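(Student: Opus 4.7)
The plan is to derive this immediately from Proposition \ref{sliding block code->shift_commuting}, which tells us that any sliding block code commutes with $\sigma$. Since commuting with $\sigma$ implies commuting with every power of $\sigma$ (by an easy induction, using $\sigma^{p+1} = \sigma \circ \sigma^p$ and associativity of composition), we get $\Phi \circ \sigma^p = \sigma^p \circ \Phi$ for all $p \geq 1$.

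From here the argument is a one-line computation. Assuming $x \in \Lambda$ has period $p$, i.e.\ $\sigma^p(x) = x$, I would simply evaluate
\[
\sigma^p\bigl(\Phi(x)\bigr) = \Phi\bigl(\sigma^p(x)\bigr) = \Phi(x),
\]
which is exactly the statement that $\Phi(x)$ has period $p$.

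There is no real obstacle here; the content of the corollary is entirely carried by Proposition \ref{sliding block code->shift_commuting}. The only thing worth noting for completeness is that this reasoning gives periodicity in the sense $\sigma^p(\Phi(x)) = \Phi(x)$, but does not assert that $p$ is the \emph{least} period of $\Phi(x)$ — indeed, $\Phi$ could collapse $x$ to a sequence of strictly smaller period (for instance if $\Phi$ is a constant-valued block code, or in degenerate cases where $\Phi(x) = \O$). The statement as written only claims that $p$ is a period, so no further work is needed.
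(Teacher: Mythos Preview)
Your proof is correct and is exactly the intended argument: the paper does not spell out a proof but simply notes that it is analogous to the corresponding corollary in \cite{GoncalvesSobottkaStarling2015}, which is precisely the one-line computation from Proposition \ref{sliding block code->shift_commuting} that you gave. Your remark that $p$ need not be the least period is also accurate and consistent with the statement.
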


\begin{cor}\label{sliding block code->emptysequence goes to constant}
If $\Phi:\Lambda\to\Gamma$ is a sliding block code, then $\Phi(\O)$ is a constant sequence (that is, $\Phi(\O)=(\ldots ddd\ldots)$ for some $d\in \LAG\cup\{\o\}$). Furthermore, if $\Phi(\O)=\O$, then the image of a finite sequence $x\in\Lambda^{\text{fin}}$ by $\Phi$ is a finite sequence of $\Gamma$.
\end{cor}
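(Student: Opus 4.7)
The plan is to prove the two claims in sequence. For the first claim, I would apply Corollary~\ref{sliding block code->preserves_period} directly to the point $\O$: since $\s(\O)=\O$, the empty sequence has period $1$, and hence $\Phi(\O)$ must also have period $1$. Unwinding the definition, $\s(\Phi(\O))=\Phi(\O)$ forces $(\Phi(\O))_{n+1}=(\Phi(\O))_n$ for every $n\in\Z$, so $\Phi(\O)$ is a constant bi-infinite sequence $(\ldots ddd\ldots)$ for some letter $d\in\LAG\cup\{\o\}$.

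For the second claim, suppose $\Phi(\O)=\O$. By the local rule \eqref{LR_block_code}, $(\Phi(\O))_n=\o$ for every $n\in\Z$ together with $\s^n(\O)=\O$ gives $\O\in C_\o$. Because $C_\o$ is finitely defined in $\Lambda$, it is a union $\bigcup_i [b^i]_{k_i}^{\ell_i}$ of pseudo cylinders, and $\O$ must belong to one of them, say $[b^{i_0}]_{k_{i_0}}^{\ell_{i_0}}\subseteq C_\o$. Since every entry of $\O$ is $\o$, necessarily $b^{i_0}=(\o,\ldots,\o)$. The defining rule of $\Sigma_A^\Z$, namely $x_i=\o \Rightarrow x_{i+1}=\o$, then collapses this pseudo cylinder to the single-coordinate condition
\[
[b^{i_0}]_{k_{i_0}}^{\ell_{i_0}}=\{x\in\Lambda:\ x_{k_{i_0}}=\o\}\subseteq C_\o.
\]

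Now take any $x\in\Lambda^{\text{fin}}$. The case $x=\O$ is immediate since $\Phi(\O)=\O\in\Gamma^{\text{fin}}$, so assume $l(x)=k_0\in\Z$. For every integer $n>k_0-k_{i_0}$ we have $n+k_{i_0}>k_0=l(x)$, and therefore $\s^n(x)_{k_{i_0}}=x_{n+k_{i_0}}=\o$. Consequently $\s^n(x)\in\{y:\ y_{k_{i_0}}=\o\}\subseteq C_\o$, which by \eqref{LR_block_code} yields $(\Phi(x))_n=\o$. Hence $(\Phi(x))_n=\o$ for all sufficiently large $n$, so $\Phi(x)\in\Gamma^{\text{fin}}$.

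The main obstacle worth highlighting is that, even though the topology at $\O$ has no countable neighbourhood basis (Proposition~\ref{non1countable}), one does not need to approximate $\O$ topologically. The only input needed is the single pseudo cylinder of $C_\o$ containing $\O$, and the critical step is observing that the empty-letter absorption rule lets us reduce a finite block of $\o$'s to a single-coordinate constraint $\{x:x_k=\o\}$. This is precisely what transfers the condition from $\O$ to every finite sequence via the shift, forcing the image to be finite.
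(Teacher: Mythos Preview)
Your argument is correct. The paper itself does not write out a proof for this corollary; it simply remarks that the result and its proof are analogous to the corresponding one-sided statement in \cite{GoncalvesSobottkaStarling2015}. Your derivation is exactly the two-sided transcription one expects: use Corollary~\ref{sliding block code->preserves_period} on the fixed point $\O$ for the first claim, and for the second claim locate the pseudo cylinder of $C_\o$ containing $\O$, observe via the absorption rule $x_i=\o\Rightarrow x_{i+1}=\o$ that this pseudo cylinder is simply $\{x\in\Lambda:\ x_{k_{i_0}}=\o\}$, and conclude that $\s^n(x)\in C_\o$ for all large $n$ whenever $l(x)<\infty$. There is nothing to add.
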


As alluded to already, in the finite alphabet case a map between shift spaces is shift commuting and continuous if and only if it is a sliding block codes -- this is the Curtis-Lyndon-Hedlund Theorem. The following theorems give sufficient conditions for our sliding block codes to be continuous.

\begin{theo}\label{continuoussliding block code}
Let $\Phi:\Lambda\to\Gamma$ be a sliding block code such that $\Phi(\O)=\O$, and suppose there exists $L\in\Z$ such that $\Phi=\sum_{a\in \LAG\cup\{\o\}}a\mathbf{1}_{C_a}\circ\sigma^{n}(x)$ and such that for each $a\in \LAG$, $C_a=[c^a]_{k_a}^{L}$ for some $c^a_{-k_a+L+1}\neq \o$. Then $\Phi$ is continuous. Additionally, if $C_{\o}=\displaystyle\bigcup_{\tiny\begin{array}{c}b\in B(\Lambda):\\b_{-k_b+L+1}= \o\end{array}}[b]_{k_b}^{L}$, then $\Phi$ is a homeomorphism onto its image.
\end{theo}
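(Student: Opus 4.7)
The strategy is to prove continuity of $\Phi$ by showing that the pre-image under $\Phi$ of every element of the basis $B_{\Sigma^\Z_B}$ is open in $\Lambda$. Fix $y\in\Sigma_B^{\Z\ \text{fin}}\setminus\{\O\}$ with $l(y)=k$, and write $y_i=a_i\in\LAG$ for $i\leq k$. The local rule together with the hypothesis $C_{a_i}=[c^{a_i}]_{k_{a_i}}^{L}$ gives
$$\Phi^{-1}(Z(y))=\bigcap_{i\leq k}\big\{z\in\Lambda\,:\,(z_{i+k_{a_i}}\ldots z_{i+L})=c^{a_i}\big\}.$$
Because $c^{a_i}_{-k_{a_i}+L+1}\neq\o$ and $\o$'s in subblocks of $\Sigma^\Z_A$-sequences can only appear at the end, every letter of $c^{a_i}$ lies in $A$. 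Each constraint therefore prescribes $z_p\in A$ at positions $p\in[i+k_{a_i},i+L]$, and as $i$ ranges over $i\leq k$ these prescribed positions fill $(-\infty,k+L]$: for any such $p$, the choice $i=p-L\leq k$ satisfies $p=i+L\in[i+k_{a_i},i+L]$ since $k_{a_i}\leq L$. Hence either the constraints are mutually inconsistent (so $\Phi^{-1}(Z(y))=\emptyset$) or they specify a unique left-infinite word $w$ with $l(w)=k+L$ and $w_{k+L}\in A$; in that case $\Phi^{-1}(Z(y))=Z(w)\cap\Lambda$, a relatively clopen cylinder.

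From this, continuity is straightforward. For a basic open $Z(y,F)=Z(y)\cap\bigcap_{a\in F}Z(y\cdot a)^c$, where $y\cdot a$ denotes $y$ extended by $a$ at position $k+1$, the pre-image is a finite intersection of a clopen cylinder with complements of clopen cylinders, hence open. For $\big[\bigcup_jZ(y^j)\big]^c$, the pre-image $\big[\bigcup_j\Phi^{-1}(Z(y^j))\big]^c$ is the complement of a finite union of cylinders and thus already lies in $B_{\Sigma^\Z_A}$.

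For the homeomorphism assertion, the additional assumption amounts to $C_\o=\{z\in\Lambda\,:\,z_L=\o\}$: any $z\in[b]_{k_b}^L$ with $b_{-k_b+L+1}=\o$ satisfies $z_L=\o$, and conversely any $z\in\Lambda$ with $z_L=\o$ lies in $[(\o)]_L^L\subseteq C_\o$ since $(\o)\in B(\Lambda)$. This gives injectivity of $\Phi$: if $\Phi(z)=\Phi(z')$, then at each $i\in\Z$ either $(\Phi(z))_i=a\in\LAG$, and $\sigma^i(z),\sigma^i(z')\in[c^a]_{k_a}^L$ force $z_{i+L}=z'_{i+L}=c^a_{-k_a+L+1}$, or $(\Phi(z))_i=\o$ and $z_{i+L}=z'_{i+L}=\o$; letting $i$ vary yields $z=z'$. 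Since $\Lambda$ is compact and $\Gamma$ is Hausdorff, the continuous injection $\Phi$ is automatically a closed map, hence a homeomorphism onto its image. The main delicacy lies in the pre-image computation — one must verify that the family of constraints indexed by $i\leq k$ fills every coordinate up to $k+L$ and produces a word whose last entry is in $A$, so that $Z(w)$ really qualifies as a cylinder in our basis.
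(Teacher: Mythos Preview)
Your argument is correct and follows essentially the same route as the paper: compute $\Phi^{-1}(Z(y))$ as the intersection $\bigcap_{i\leq k}\sigma^{-i}(C_{y_i})$, observe that the resulting constraints determine a single left-infinite word on coordinates $\leq k+L$ (or are inconsistent), handle $Z(y,F)$ and complements of finite unions by set algebra, and then for the second part use that the full family $\{C_{y_i}\}_{i\in\Z}$ pins down every coordinate to get injectivity, finishing with compact--Hausdorff. Your reformulation of the extra hypothesis as $C_\o=\{z\in\Lambda:z_L=\o\}$ and the coordinate-by-coordinate injectivity check are slightly more explicit than the paper's ``by a computation similar to \eqref{EQcontinuoussliding}'', but the underlying idea is identical.
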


\begin{proof}

Given $y=(y_n)_{n\in\Z}\in\Gamma$, for all $K\leq l(y)$ we have that

\begin{equation}\label{EQcontinuoussliding}
\begin{array}{lll}\Phi^{-1}\Big(Z\big((y_n)_{n\leq K}\big)\cap\Gamma\Big)&=\displaystyle\Phi^{-1}\left(\bigcap_{n\leq K}[y_n]_n^n\right)
&=\displaystyle\bigcap_{n\leq K}\Phi^{-1}\big([y_n]_n^n\big)\\
&= \displaystyle\bigcap_{n\leq K}\Phi^{-1}\circ\s^{-n}([y_n]_0^0\big)
&=\displaystyle\bigcap_{n\leq K}\s^{-n}\circ\Phi^{-1}([y_n]_0^0\big)\\
&=\displaystyle\bigcap_{n\leq K}\s^{-n}\big(C_{y_n}\big)
&=\displaystyle\bigcap_{n\leq K}\s^{-n}\big([c^{y_n}]_{k_{y_n}}^{L}\big)\\
&=\displaystyle\bigcap_{n\leq K}[c^{y_n}]_{k_{y_n}+n}^{L+n}.&
 \end{array}
\end{equation}

Thus $\Phi^{-1}\Big(Z\big((y_n)_{n\leq K}\big)\cap\Gamma\Big)$ is a clopen set since it is either the empty set or a generalized cylinder defined on the coordinates less than or equal to $K+L$.
Furthermore, if $y^j\in\Gamma$ and $K_j\leq l(y^j)$ for $j=1\ldots,N$, then

$$\Phi^{-1}\left(\left[\bigcup_{j=1}^N Z((y_n^j)_{n\leq K_j})\cap\Gamma\right]^c\right)=\displaystyle\Phi^{-1}\left(\bigcap_{j=1}^N \left[Z((y_n^j)_{n\leq K_j})\cap\Gamma\right]^c\right)
=\displaystyle\bigcap_{j=1}^N \left[\Phi^{-1}\left(Z((y_n^j)_{n\leq K_j})\cap\Gamma\right)\right]^c$$
and, since each $\Phi^{-1}\left(Z((y_n^j)_{n\leq K_j})\cap\Gamma\right)$ is a clopen set, we have that $\Phi^{-1}\left(\left[\bigcup_{j=1}^N Z((y_n^j)_{n\leq K_j})\cap\Gamma\right]^c\right)$ is also a clopen set.

To finish we observe that for any finite set $F\subset A$, given a cylinder of the form $Z\big((y_n)_{n\leq K},F\big)\cap\Gamma$, we have that
$$Z\big((y_n)_{n\leq K},F\big)\cap\Gamma=Z((y_n)_{n\leq K})\cap\left[\bigcup_{f\in F} Z(\ldots,y_{K-1},y_K,f)\right]^c,$$
which implies that its inverse image by $\Phi$ is also a clopen set. Hence we conclude that $\Phi$ is continuous.\\

Now suppose additionally that $C_{\o}=\displaystyle\bigcup_{\tiny\begin{array}{c}b\in B(\Lambda):\\b_{-k_b+L+1}= \o\end{array}}[b]_{k_b}^{L}$. Since for all $y\in\Phi(\Lambda)$ we have that $\displaystyle\{y\}=\bigcap_{n\in\Z}[y_n]_n^n$, then, by a computation similar to \eqref{EQcontinuoussliding}, $\Phi^{-1}(y)$ will be a singleton. Thus, $\Phi$ is invertible on its image and, since $\Lambda$ and $\Phi(\Lambda)$ are compact, $\Phi$ is a homeomorphism.

\end{proof}

\subsection{Higher block presentation of a shift space}

Another standard construction in symbolic dynamics is the higher block presentation, where a subshift is recoded to another shift over the alphabet of all possible blocks of a given length. In this section we adapt this construction to our shift spaces.

\begin{defn}\label{defn_HBC} Given $\Sigma_A^\Z$ and $M\in\N$, denote $A^{[M]}:=B_M(\Sigma_A^\Z)$. The {\em $M^{th}$ higher block code}
is the map $\Xi^{[M]}:\Sigma_A^\Z\to\Sigma_{A^{[M]}}^\Z$ given, for all $x\in\Sigma_A^\Z$ and $i\in\Z$, by
$$\big(\Xi^{[M]}(x)\big)_i: = \begin{cases} \o &\text{if }x_i=\o\\
 \left[x_{i-M+1}\ldots x_i\right] &\text{if }x_i\neq\o.\end{cases}$$

\end{defn}

It is immediate that a higher block code is an invertible $M$-block code with anticipation 0. We now show that the $M$th higher block code is continuous, injective, and sends shift spaces to shift spaces.

\begin{prop}\label{M_Higher_Block_Code}
Let $\Lambda\subset \Sigma_A^\Z$ be a shift space and let $\Xi^{[M]}:\Sigma_A^\Z\to\Sigma_{A^{[M]}}^\Z$ be the $M^{th}$ higher block code. Then,
\begin{enumerate}

\item[(i)] $\left.\Xi^{[M]}\right|_\Lambda:\Lambda\to\Xi^{[M]}(\Lambda)$ is continuous.

\item[(ii)]  $\left.\Xi^{[M]}\right|_\Lambda:\Lambda\to\Xi^{[M]}(\Lambda)$ is invertible and its inverse is a 1-block code given by $$(y_i)_{i\in\Z}\mapsto (\tilde x_i)_{i\in\Z},$$ where $\tilde x_i=x_i$ if $y_i=[x_{i-M+1}
\ldots x_i]$ and $\tilde x_i=\o$ if $y_i=\o$;

\item[(iii)]  $\Xi^{[M]}(\Lambda)$ is a shift space in $\Sigma_{A^{[M]}}^\Z$;

\end{enumerate}
\end{prop}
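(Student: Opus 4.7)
The plan is to realize $\Xi^{[M]}|_\Lambda$ as a concrete sliding block code, invoke Theorem \ref{continuoussliding block code} for (i), leverage compactness for (ii), and for (iii) combine Proposition \ref{sliding block code->shift_commuting} with Lemma \ref{infdense2}. Throughout I write $\Gamma := \Xi^{[M]}(\Lambda)$.

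For (i), I will exhibit the partition of $\Lambda$ demanded by Definition \ref{defn_sliding block code}, indexed by $\LAG\cup\{\o\}$:
$$C_a := [w_1\ldots w_M]_{-M+1}^{0}\cap\Lambda \ \text{ for }\ a=[w_1\ldots w_M]\in\LAG, \qquad C_\o := [\o]_0^0\cap\Lambda.$$
The convention $x_i=\o\Rightarrow x_{i+1}=\o$ immediately gives $\s(C_\o)\subseteq C_\o$, and this partition rewrites $\Xi^{[M]}$ in the form \eqref{LR_block_code}. Each $C_a$ with $a\in\LAG$ is a single pseudo cylinder $[c^a]_{-M+1}^{0}$ whose terminal letter $w_M$ lies in $A$, and $C_\o=[\o]_0^0$ is itself the requisite pseudo cylinder terminating in $\o$ with $L=0$. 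Both hypotheses of Theorem \ref{continuoussliding block code} (including the additional one) thus hold, and the theorem delivers not only continuity but also that $\Xi^{[M]}|_\Lambda$ is a homeomorphism onto $\Gamma$.

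For (ii), injectivity is direct: if $\Xi^{[M]}(x)=\Xi^{[M]}(x')$, reading the last entry of the block at each coordinate and recalling that $(\Xi^{[M]}(x))_i=\o$ iff $x_i=\o$ forces $x_i=x'_i$ for all $i$. The stated formula for $(\Xi^{[M]})^{-1}$ is then immediate. To confirm that this inverse is a $1$-block code I partition $\Gamma$ by setting $D_a := \{y\in\Gamma : y_0\text{ ends in }a\}$ for $a\in\LA$, and $D_\o := [\o]_0^0\cap\Gamma$; each $D_a$ is a union of pseudo cylinders of the form $[b]_0^0\cap\Gamma$ with $b\in A^{[M]}$, hence finitely defined with both memory and anticipation $0$. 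Continuity of the inverse is already subsumed by the homeomorphism conclusion of (i).

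For (iii), I verify the three axioms of Definition \ref{2-sided_shifts} in turn. Closedness of $\Gamma$ in $\Sigma_{A^{[M]}}^\Z$ is automatic since $\Gamma$ is the continuous image of the compact $\Lambda$ inside a Hausdorff space. Shift invariance follows from Proposition \ref{sliding block code->shift_commuting} (giving $\s\circ\Xi^{[M]}=\Xi^{[M]}\circ\s$) together with $\s(\Lambda)=\Lambda$. For density of $\Gamma^{\text{inf}}$ in $\Gamma$ I apply Lemma \ref{infdense2} and transfer the infinite extension property from $\Lambda^{\text{fin}}\setminus\{\O\}$: noting that $l(\Xi^{[M]}(x))=l(x)$ for $x\neq\O$, the map $c\mapsto [x_{l(x)-M+2}\ldots x_{l(x)}c]$ injects $\F_1(\Lambda,(x_{i+l(x)})_{i\leq 0})$ into $\F_1(\Gamma,(y_{i+l(y)})_{i\leq 0})$ for $y:=\Xi^{[M]}(x)$, preserving the required infinitude. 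The main obstacle I anticipate is precisely this bookkeeping in (iii), namely how the $M$-block windowing interacts with the length function and the left-infinite tails when transferring the infinite extension property; parts (i) and (ii) reduce to routine verifications once Theorem \ref{continuoussliding block code} is fitted to the situation.
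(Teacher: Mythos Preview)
Your proof is correct and follows essentially the same route as the paper's: both invoke Theorem \ref{continuoussliding block code} for (i), treat (ii) as immediate from the definition, and for (iii) combine closedness via compactness, shift invariance via commutation with $\s$, and a transfer of the infinite extension property from $\Lambda$ to $\Gamma$. Your argument is in fact slightly more explicit---you spell out the partition $\{C_a\}$ needed to fit Theorem \ref{continuoussliding block code} and you also verify its additional hypothesis to get the homeomorphism conclusion (which the paper does not invoke); for (iii) you phrase the transfer via Lemma \ref{infdense2} and the injection $c\mapsto[x_{l(x)-M+2}\ldots x_{l(x)}c]$ into the follower set, whereas the paper does the equivalent thing by exhibiting an explicit sequence $y_n\to y$ with pairwise distinct $(y^n)_{l(y)+1}$.
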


\begin{proof} The statement $(i)$ follows from the fact that $\Phi$ is a sliding block code which satisfies the properties of Theorem \ref{continuoussliding block code}, while $(ii)$ is direct from the definition of $\Xi^{[M]}$.

To prove $(iii)$ we notice that, since $\Phi$ is a continuous sliding block code, we have that $\Xi^{[M]}(\Lambda)$ is closed in $\Sigma_{A^{[M]}}^\Z$ and invariant under $\s$. Thus we only need to show that $\Xi^{[M]}(\Lambda)$ satisfies the infinite extension property.

   Observe that if $y=\big([b_{i-M+1}\ldots b_i]\big)_{i\leq L}\in\Xi^{[M]}(\Lambda)^{\text{fin}}$, then it is necessarily the image of some $b=(b_i)_{i\leq L}\in\Lambda^{\text{fin}}$. Since
   $\Lambda$ has the infinite extension property, there exists a sequence $x_n = (x_i^n)_{i\in\Z}$ in $\Lambda$ such that $x^{n}_i=b_i$, for all $n\geq 1$ and $i\leq L$, and $x^n_{L+1}\neq x^m_{L+1}$ if $n\neq m$ (that
   is, $x_n$ is a sequence which converges to $b$).

   Define $y_n=(y^n_i)_{i\in\Z}:=\Xi^{[M]}(x^{n})$. It is clear that $y_n\in\Lambda^{[M]}$. Note that for all $n\geq 1$ and $i\leq L$ we have that $y^n_i=[b_{i-M+1}\ldots b_i]=y_i$, and $y^n_{L+1}=[b_{L-M+2}\ldots
   x^n_{L+1}]$. Hence, since $x^n_{L+1}\neq x^m_{L+1}$ if $n\neq m$, it follows that $y^n_{L+1}=[b_{L-M+2}\ldots x^n_{L+1}]\neq[b_{L-M+2}\ldots x^m_{L+1}]= y^m_{L+1}$ if $m\neq n$, which means that $y_n$ converges to
   $y$. Since this holds for any sequence of $\Xi^{[M]}(\Lambda)^{\text{fin}}$, $\Xi^{[M]}(\Lambda)$ satisfies the infinite extension property.

\end{proof}

Given a shift space $\Lambda\subseteq\Sigma_A^\Z$ we will call $\Lambda^{[M]}:=\Xi^{[M]}(\Lambda)$ its {\em $M^{th}$ higher block shift}. Proposition \ref{M_Higher_Block_Code} above means that any shift space $\Lambda$ is always topologically conjugate to its $M^{th}$ higher block shift, for any $M\geq 2$.\\

 Given a left infinite block $w=(w_i)_{i\leq 0}$, define $\Xi^{[M]}(w):=\big([w_{i-M+1}\ldots w_i]\big)_{i\leq 0}$, and given a finite block $w=(w_i)_{1\leq i\leq k}$ with $k\geq M$ define $\Xi^{[M]}(w):=\big([w_i\ldots w_{i+M-1}]\big)_{1\leq i \leq k-M+1}$. The next result shows that a higher block presentation of a shift given by a set of forbidden words $\mathbf{F}$ can be given by a set of forbidden words derived from $\mathbf{F}$, and is proved using the same argument seen in \cite[Proposition 1.4.3]{LindMarcus}.

 \begin{cor}\label{COR:M_Higher_Block_Code} Given a shift space $\Lambda\subset\Sigma_A^\Z$, let $\mathbf{F}\subset A^{\Z^-}\cup\bigcup_{k\geq M}A^k$ be any set such that $\Lambda=X_{\mathbf{F}}$. Then, $\Lambda^{[M]}=X_{\mathbf{F}^{[M]}}$, where $\mathbf{F}^{[M]}\subset (A^M)^{\Z^-}\cup\bigcup_{k\geq 1}(A^M)^k$ is given by
$$ \mathbf{F}^{[M]}:=\Big\{([u_i\ldots u_M],[v_1\ldots v_M])\in (A^M)^2: \ u_i\neq v_{i-1},\ 2\leq i\leq M\Big\}\cup\Big\{\Xi^{[M]}(w):\ w\in\mathbf{F}\Big\}.$$

\end{cor}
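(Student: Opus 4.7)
The plan is to establish $\Lambda^{[M]} = X_{\mathbf{F}^{[M]}}$ by first reducing to an equality on the infinite parts and then using the decoding built into $\mathbf{F}^{[M]}$. Both sides are shift spaces: $\Lambda^{[M]}$ by Proposition \ref{M_Higher_Block_Code}(iii), and $X_{\mathbf{F}^{[M]}}$ by Proposition \ref{shiftforbiddenwords}. Hence each coincides with the closure of its infinite part by Definition \ref{2-sided_shifts}(3) and Lemma \ref{infdense2}, and it suffices to show that $\Lambda^{[M], \text{inf}} = X_{\mathbf{F}^{[M]}}^{\text{inf}}$.

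For the inclusion $\Lambda^{[M],\text{inf}} \subseteq X_{\mathbf{F}^{[M]}}^{\text{inf}}$, I would take $y = \Xi^{[M]}(x)$ with $x \in \Lambda^{\text{inf}} = X_\mathbf{F}^{\text{inf}}$. For every $i \in \Z$ one has $y_i = [x_{i-M+1}\ldots x_i]$ and $y_{i+1} = [x_{i-M+2}\ldots x_{i+1}]$, so the two consecutive $M$-blocks overlap in their last $M-1$ letters of $y_i$ and first $M-1$ letters of $y_{i+1}$; therefore no forbidden pair of the first kind appears in $y$. Moreover, if $\Xi^{[M]}(w)$ occurred as a (finite or left-infinite) subblock of $y$ for some $w\in\mathbf{F}$, then $w$ would occur correspondingly as a subblock or a left-infinite tail of $x$, contradicting $x \in X_\mathbf{F}^{\text{inf}}$.

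For the reverse inclusion, I would take $y \in X_{\mathbf{F}^{[M]}}^{\text{inf}}$ and use the compatibility condition built into $\mathbf{F}^{[M]}$ to decode: because no pair $([u_1\ldots u_M],[v_1\ldots v_M])$ with $u_i\neq v_{i-1}$ for some $2\leq i\leq M$ appears in $y$, consecutive entries $y_i,y_{i+1}$ always agree on the overlap, and reading off the last coordinate of each $y_i$ produces a well-defined $x = (x_i)_{i\in\Z} \in A^\Z$ with $\Xi^{[M]}(x) = y$. The absence in $y$ of any occurrence of $\Xi^{[M]}(w)$ with $w \in \mathbf{F}$ then translates directly into the absence of $w$ as a subblock or left-infinite tail of $x$, so $x \in X_\mathbf{F}^{\text{inf}} = \Lambda^{\text{inf}}$ and $y = \Xi^{[M]}(x) \in \Lambda^{[M],\text{inf}}$.

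The routine but slightly fiddly step is the bookkeeping in the decoding: one must check carefully that the compatibility clause in the first half of $\mathbf{F}^{[M]}$ really is the correct overlap condition to guarantee a unique preimage $x$, and that the translation ``$w$ appears in $x$ iff $\Xi^{[M]}(w)$ appears in $y$'' works simultaneously for the finite-block case and the left-infinite case (where $w \in A^{\Z^-}$ and $\Xi^{[M]}(w) \in (A^M)^{\Z^-}$). Once these verifications are in place, combining both inclusions gives $\Lambda^{[M],\text{inf}} = X_{\mathbf{F}^{[M]}}^{\text{inf}}$, and taking closures finishes the proof.
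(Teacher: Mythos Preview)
Your proposal is correct and follows precisely the approach the paper intends: the paper does not supply its own argument but instead notes that the result ``is proved using the same argument seen in \cite[Proposition 1.4.3]{LindMarcus}.'' What you have written is a faithful expansion of that argument, with the only new wrinkle---handling the left-infinite forbidden words $w\in A^{\Z^-}$ alongside the finite ones---addressed exactly as it should be. Your reduction to the infinite parts via Proposition \ref{M_Higher_Block_Code}(iii), Proposition \ref{shiftforbiddenwords}, and the density in Definition \ref{2-sided_shifts} is the right way to avoid a separate analysis of the finite sequences.
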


We now show that, in contrast to the finite alphabet case, taking a higher block presentation never results in a shift of finite type.
\begin{prop} Let $\Lambda\subset \Sigma_A^\Z$ be a shift space with $|\LA|=\infty$. Then for any $M\geq 2$,  the $M^{th}$ higher block shift of $\Lambda$ is never a shift of finite type.
\end{prop}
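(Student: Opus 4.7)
The strategy is proof by contradiction. Suppose $\Lambda^{[M]}=X_{\mathbf{F}}$ for some finite $\mathbf{F}\subset (A^{[M]})^{\Z^-}\cup\bigcup_{k\geq 1}(A^{[M]})^k$; I will exhibit a sequence $y\in X_{\mathbf{F}}\setminus \Lambda^{[M]}$. The key observation driving the construction is that for $M\geq 2$ and distinct letters $a_1\neq a_2$ in $A$, the two-block $[a_1\ldots a_1]\,[a_2\ldots a_2]\in(A^{[M]})^2$ violates the overlap condition inherent to the image $\Xi^{[M]}(\Lambda)$, so it cannot occur as a subword of any element of $\Lambda^{[M]}$. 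Because $\mathbf{F}$ is finite while $|\LA|=\infty$, I can choose the letters $a_1,a_2$ so that $\mathbf{F}$ cannot detect the overlap failure.

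Split $\mathbf{F}=\mathbf{F}_{\text{fin}}\cup \mathbf{F}_\infty$ according to whether the forbidden word is finite or left-infinite. Let $L_1\subset A$ be the finite set of $A$-letters that appear inside any $M$-block of any word of $\mathbf{F}_{\text{fin}}$, and let $L_2\subset A$ be the finite set of $a\in A$ for which some $f\in\mathbf{F}_\infty$ is eventually constantly equal to $[a\ldots a]$ going to the left (each $f$ contributes at most one such $a$, so $|L_2|\leq|\mathbf{F}_\infty|$). Using $|\LA|=\infty$ and $|L_1\cup L_2|<\infty$, pick distinct $a_1,a_2\in\LA\setminus(L_1\cup L_2)$ and define $y\in\Sigma_{A^{[M]}}^{\Z\ \text{inf}}$ by
\[
y_i:=\begin{cases}[a_1\ldots a_1], & i\leq 0,\\ [a_2\ldots a_2], & i\geq 1.\end{cases}
\]

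Now verify $y\in X_{\mathbf{F}}\setminus\Lambda^{[M]}$. Every finite subword of $y$ involves only the $A$-letters $a_1,a_2$, neither in $L_1$, so no such subword lies in $\mathbf{F}_{\text{fin}}$. Each left-infinite subword of $y$ is eventually constantly $[a_1\ldots a_1]$ to the left, and by $a_1\notin L_2$ no word of $\mathbf{F}_\infty$ has this tail, so no left-infinite subword of $y$ lies in $\mathbf{F}_\infty$. Hence $y\in X_{\mathbf{F}}$. Conversely, if $y=\Xi^{[M]}(x)$ for some $x\in\Lambda$, then $y_0=[x_{-M+1}\ldots x_0]=[a_1\ldots a_1]$ and $y_1=[x_{-M+2}\ldots x_1]=[a_2\ldots a_2]$ force the single coordinate $x_{-M+2}$ to equal both $a_1$ and $a_2$ (this is where $M\geq 2$ is used), a contradiction. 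Thus $y\notin\Lambda^{[M]}$, contradicting $\Lambda^{[M]}=X_{\mathbf{F}}$.

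The main subtlety is the possible presence of left-infinite forbidden words: choosing $a_1,a_2$ using only $L_1$ would leave open the possibility that the (eventually constant) left tail of $y$ happens to coincide with some $f\in\mathbf{F}_\infty$. This is resolved by the auxiliary set $L_2$ together with the pigeonhole observation that only finitely many choices of $a_1$ can produce a forbidden eventual tail, while $\LA$ is infinite and leaves room to spare.
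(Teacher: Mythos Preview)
Your proof is correct and shares with the paper the key observation that overlap-violating two-blocks such as $[a_1\ldots a_1][a_2\ldots a_2]$ can never occur in $\Lambda^{[M]}$. The paper's argument, however, is essentially a one-line sketch: it asserts that any $\mathbf{F}$ with $\Lambda^{[M]}=X_{\mathbf{F}}$ ``must contain'' all overlap-violating length-two blocks, and since there are infinitely many of these (as $|\LA|=\infty$) no finite $\mathbf{F}$ will do. Taken literally this claim is not quite justified, since a finite $\mathbf{F}$ could in principle forbid such two-blocks indirectly via longer or left-infinite words rather than by listing them explicitly.

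Your argument is a genuine strengthening in rigor: rather than counting forbidden patterns, you fix an arbitrary finite $\mathbf{F}$, extract from it the finite sets $L_1$ and $L_2$ of $A$-letters that could possibly interact with $\mathbf{F}_{\text{fin}}$ or with the eventual tails of $\mathbf{F}_\infty$, and then exhibit a concrete $y\in X_{\mathbf{F}}^{\text{inf}}\setminus\Lambda^{[M]}$ built from letters outside $L_1\cup L_2$. The treatment of $\mathbf{F}_\infty$ via the pigeonhole observation $|L_2|\leq|\mathbf{F}_\infty|$ is exactly the ingredient the paper's sketch omits. The cost of your approach is a slightly longer argument; the gain is that it actually closes the gap and makes transparent why left-infinite forbidden words cannot rescue a finite $\mathbf{F}$.
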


\begin{proof}
This follows from the fact that $\Lambda^{[M]}=X_{\mathbf{F}}$ for some $\mathbf{F}$ which must contains all blocks of length 2 of the form $$\Big((a_1\ldots a_M)(b_1\ldots b_M)\Big),$$ where $a_{i+1}\neq b_i$ for all $i=1,\ldots,M-1$.

\end{proof}

The following results are consequences of Proposition \ref{M_Higher_Block_Code} and their proofs are analogous to the proofs in the finite alphabet case (see \cite{LindMarcus}). We sketch only the proof for Proposition \ref{edge_shift_conjugacy}.

\begin{prop}\label{higher_block_conjugacy}
Let $\Lambda\subset \Sigma_A^\Z$ be a shift space and let $\Lambda^{[M]}$ be its $M^{th}$ higher block shift.

\begin{enumerate}

\item[(i)] If $\Lambda$ is a column-finite (or row-finite) shift, then $\Lambda^{[M]}$ is also a column-finite (or row-finite) shift;

\item[(ii)] If $\Lambda$ is a $K$-step shift, then $\Lambda^{[M]}$ is a $L$-step shift where $L=\max\{1,K-M+1\}$.

\end{enumerate}
\end{prop}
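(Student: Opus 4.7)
The plan is to combine the description of letters of $\Lambda^{[M]}$ from Definition~\ref{defn_HBC}---each letter is a length-$M$ block $[a_1\ldots a_M]$ of $\Lambda$---with Corollary~\ref{COR:M_Higher_Block_Code}, which gives an explicit forbidden-word description of $\Lambda^{[M]}$. For (i), assuming $\Lambda$ is row-finite and letting $[a_1\ldots a_M]$ be a letter of $\Lambda^{[M]}$, I would observe that any element of $\F_1(\Lambda^{[M]}, [a_1\ldots a_M])$ has the form $[a_2\ldots a_M b]$ for some $b\in A$ with $a_1\ldots a_M b\in B(\Lambda)$; the assignment $[a_2\ldots a_M b]\mapsto b$ is then an injection from this follower set into $\F_1(\Lambda, a_M)$, which is finite by hypothesis. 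Hence $\Lambda^{[M]}$ is row-finite, and the column-finite case is handled symmetrically, since any predecessor of $[a_1\ldots a_M]$ in $\Lambda^{[M]}$ is of the form $[c\, a_1\ldots a_{M-1}]$ with $c\in \Pp_1(\Lambda, a_1)$.

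For (ii), I would first assume $K+1\geq M$, so that $\mathbf{F}\subseteq A^{K+1}\subseteq \bigcup_{k\geq M}A^k$ and Corollary~\ref{COR:M_Higher_Block_Code} applies directly: $\mathbf{F}^{[M]}$ consists of length-$2$ overlap constraints, together with the words $\Xi^{[M]}(w)$ for $w\in\mathbf{F}$, each of length $(K+1)-M+1=K-M+2$. Thus every forbidden word in $\mathbf{F}^{[M]}$ has length at most $\max\{2,K-M+2\}$, showing that $\Lambda^{[M]}$ is a $(K-M+1)$-step shift (or a $1$-step shift when $K-M+1\leq 1$). In the remaining case $K+1<M$, I would first pad $\mathbf{F}$ to the length-$M$ set $\mathbf{F}':=\{u\in A^M : u \text{ contains some element of } \mathbf{F} \text{ as a factor}\}$, which satisfies $X_{\mathbf{F}'}=X_{\mathbf{F}}=\Lambda$; applying Corollary~\ref{COR:M_Higher_Block_Code} to $\mathbf{F}'$ then produces a forbidden set consisting of length-$2$ overlap constraints and length-$1$ restrictions on single $A^M$-letters, so $\Lambda^{[M]}$ is $1$-step, consistent with $\max\{1,K-M+1\}=1$.

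The main obstacle will be the care required in this last edge case: one has to verify that the padding $\mathbf{F}\rightsquigarrow\mathbf{F}'$ preserves the shift space (this is routine, since the $K$-step property makes any length-$M$ extension of a forbidden word automatically forbidden), and that the length-$1$ forbidden words appearing in $(\mathbf{F}')^{[M]}$ do not affect the step count but merely restrict the effective alphabet of $\Lambda^{[M]}$.
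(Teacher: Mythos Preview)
Your proposal is correct and follows precisely the route the paper indicates: the paper omits the proof entirely, stating only that it is ``analogous to the proofs in the finite alphabet case (see \cite{LindMarcus})'' and a consequence of Proposition~\ref{M_Higher_Block_Code}. Your argument---reading off followers and predecessors from the block structure for (i), and invoking Corollary~\ref{COR:M_Higher_Block_Code} (itself modeled on \cite[Proposition~1.4.3]{LindMarcus}) for (ii), with the standard padding $\mathbf{F}\rightsquigarrow\mathbf{F}'\subseteq A^{M}$ when $K+1<M$---is exactly the classical Lind--Marcus proof transplanted to this setting, so there is nothing to contrast.
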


\begin{prop}\label{edge_shift_conjugacy} Any $M$-step shift of $\Sigma_A^\Z$ is topologically conjugate to some edge shift.
\end{prop}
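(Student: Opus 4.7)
The plan is to reduce to the $1$-step case via higher block presentations and then exhibit any $1$-step shift as conjugate to an edge shift. Concretely, given an $M$-step shift $\Lambda\subset\Sigma_A^\Z$, Proposition \ref{higher_block_conjugacy}(ii) applied with $K=M$ yields that $\Lambda^{[M]}$ is a $1$-step shift, and Proposition \ref{M_Higher_Block_Code} tells us $\Lambda$ is topologically conjugate to $\Lambda^{[M]}$. Since topological conjugacy is transitive, it therefore suffices to prove the proposition under the extra hypothesis that the shift is $1$-step.

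So assume $\Gamma\subset\Sigma_B^\Z$ is a $1$-step shift. I would define a directed graph $G=(V,E,i,t)$ by taking $V:=\LAG$, $E:=\{(a,b)\in V\times V:\ ab\in B_2(\Gamma)\}$, and $i(a,b):=a$, $t(a,b):=b$. Before proceeding, I would check that $G$ has no sources or sinks: every $a\in V$ occurs as an entry $x_j$ of some $x\in\Gamma$, and both shift-invariance of $\Gamma$ and the infinite extension property guarantee an allowed predecessor and (infinitely many) allowed successors of $a$ in $B_2(\Gamma)$, hence at least one incoming and one outgoing edge at $a$.

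Next I would identify $\Gamma^{[2]}$ with $\Lambda(G)$ as subsets of $\Sigma_E^\Z$ (note that the alphabet of $\Gamma^{[2]}$ is exactly $B_2(\Gamma)=E$). By Corollary \ref{COR:M_Higher_Block_Code}, $\Gamma^{[2]}=X_{\mathbf{F}^{[2]}}$ where $\mathbf{F}^{[2]}$ consists of the overlap-failure blocks $([ab][cd])$ with $b\neq c$ together with $\Xi^{[2]}$-images of the original forbidden words; but since $\Gamma$ is $1$-step, those original forbidden words have length $2$ and are already removed at the level of the alphabet $E=B_2(\Gamma)$. On the other hand, by definition $\Lambda(G)=X_{\{ef:\ t(e)\neq i(f)\}}$, which is the same overlap-failure restriction. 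Thus $\Gamma^{[2]}=\Lambda(G)$, and combining with Proposition \ref{M_Higher_Block_Code}, which provides the conjugacy $\Xi^{[2]}:\Gamma\to\Gamma^{[2]}$, we conclude that $\Gamma$ is conjugate to the edge shift $\Lambda(G)$. Composing with the earlier conjugacy $\Lambda\to\Lambda^{[M]}$ gives the result.

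The main obstacle I anticipate is purely bookkeeping: making sure the alphabets of $\Gamma^{[2]}$ and $\Lambda(G)$ are matched correctly (which depends on the convention that $[x_{i-1}x_i]$ in $\Xi^{[2]}(x)$ never contains $\o$) and that the graph $G$ genuinely has no sources or sinks, so that $\Lambda(G)$ is defined. Both points lean on the infinite extension property of a shift space, and once those are set up the identification $\Gamma^{[2]}=\Lambda(G)$ is automatic from the forbidden word descriptions.
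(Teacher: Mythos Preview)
Your proof is correct and follows essentially the same strategy as the paper, namely realizing the shift as an edge shift via a higher block presentation and invoking Proposition \ref{M_Higher_Block_Code} for the conjugacy. The only difference is organizational: you first recode to the $1$-step shift $\Lambda^{[M]}$ and then take its $2$nd higher block presentation to obtain the edge shift, whereas the paper builds the graph directly with vertex set $B_M(\Gamma)$ and edge set $B_{M+1}(\Gamma)$ (as in \cite[Theorem 2.3.2]{LindMarcus}) so that a single higher block code already lands in the edge shift.
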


\begin{proof}
Let $\Gamma$ be an $M$-step shift in $\Sigma_A^\Z$. If $M = 0$ then $\Gamma$ is $\Sigma_B^\Z$ for some $B\subset A$, which is the edge shift over the graph with one vertex and with edge set equal to $B$. So suppose $M\geq 0$, and as in \cite[Theorem 2.3.2]{LindMarcus}, let $E = B_M(\Gamma)$ and $V = B_{M+1}(\Gamma)$. Furthermore, define maps $t$ and $i$ from $V$ to $E$ by
$$
i(v_1v_2\cdots v_{M+1}) = v_1v_2\cdots v_{M}, \hspace{1cm}  t(v_1v_2\cdots v_{M+1}) = v_2v_3\cdots v_{M+1}.
$$
If we let $G = (V, E, t, i)$, then it is clear that $\Gamma^{[M]} = \Lambda(G)$ from \eqref{edgeshift}. Hence by Proposition \ref{M_Higher_Block_Code}, $\Gamma$ is topologically conjugate to an edge shift.
\end{proof}

Note that Propositions \ref{higher_block_conjugacy} and \ref{edge_shift_conjugacy} say that $M$-step shifts can always be recoded as $L$-step shifts for any $L<M$ and as an edge shift in the same way that occurs in the classical theory for shift spaces over finite alphabets. These results establish a remarkable difference with respect to the theory of  Ott-Tomforde-Willis one-sided shift spaces \cite{OTW14}. In fact, if one uses the definition of sliding block codes given in \cite[Definition 7.1]{OTW14}, then it is only possible to prove a version of Proposition \ref{M_Higher_Block_Code} if $\Lambda$ is a row-finite shift (see \cite[Proposition 6.5]{OTW14}). In such a case Propositions \ref{higher_block_conjugacy} and \ref{edge_shift_conjugacy} will also hold only for row-finite shifts (see \cite[Propositions 6.2 and 6.6]{OTW14}). In fact, \cite[Example 5.18]{OTW14} presents an example of non row-finite Ott-Tomforde-Willis one-sided 1-step shift spaces that are not conjugate to edge shifts, while \cite{GR} presents a non row-finite Ott-Tomforde-Willis one-sided $(M+1)$-step shift space that is not conjugate to any $M$-step shift (both using \cite[Definition 4.1]{OTW14} for conjugacy which imposes that the length of sequences shall be preserved).

\section{Relationships between two-sided and one-sided shift spaces}\label{SEC:2-sided_1-sided}

In this section we flesh out the relationship between two-sided shift space and Ott-Tomforde-Willis one-sided shift spaces hinted at in Remark \ref{inverse_limit1} and Proposition \ref{k-projection}. We begin by showing that shift spaces can be given by a set of words that is {\em minimal} in a suitable sense.

\begin{lem}\label{LEMMA:Forbidden_words} Let $\mathbf{F}\subset A^{\Z^-}\cup\bigcup_{k\geq 1}A^k$ and let $X_{\mathbf{F}}^{\text{inf}}\subset A^\Z$ be as in \eqref{forbidden_words_2-sided}. If $w\in \bigcup_{k\geq 1}A^k$ does not belong to $B(X_{\mathbf{F}}^{\text{inf}})$, then there exists a subblock $u$ of $w$ such that:
\begin{enumerate}
\item $u\notin B(X_{\mathbf{F}}^{\text{inf}})$;
\item We have that $v\in B(X_{\mathbf{F}}^{\text{inf}})$ for all proper subblocks $v$ of $u$; and
\item $u$ is a subblock of some word in $\mathbf{F}$.
\end{enumerate}
\end{lem}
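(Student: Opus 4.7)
My plan is to let $u$ be a subblock of $w$ of minimum length among those not lying in $B(X_{\mathbf{F}}^{\text{inf}})$. Such a $u$ exists because $w$ itself is such a subblock and the collection of subblocks of $w$ is finite. Property (1) then holds by construction, and property (2) follows from minimality: any proper subblock of $u$ is strictly shorter, and therefore must belong to $B(X_{\mathbf{F}}^{\text{inf}})$.

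For property (3), I would argue by contradiction. Write $u = u_1\cdots u_n$ and suppose that $u$ is not a subblock of any element of $\mathbf{F}$; the strategy is to manufacture a point $y \in X_{\mathbf{F}}^{\text{inf}}$ that contains $u$ as a subblock, contradicting (1). When $n\geq 2$, property (2) supplies $x', x''\in X_{\mathbf{F}}^{\text{inf}}$ which, after shifting (which preserves $X_{\mathbf{F}}^{\text{inf}}$), satisfy $x'_i = u_i$ for $1\leq i\leq n-1$ and $x''_i = u_i$ for $2\leq i\leq n$; in particular $x'_i = u_i = x''_i$ on the overlap $2\leq i\leq n-1$. Define
\[
y_i := \begin{cases} x'_i & \text{if } i\leq n-1,\\ x''_i & \text{if } i\geq n, \end{cases}
\]
so that $y_i = u_i$ for $1\leq i\leq n$ and hence $u$ is a subblock of $y$. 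The degenerate case $n=1$ is handled separately by taking $y_i = u_1$ for every $i$, since any finite power $u_1^k$ or the constant left-infinite sequence of $u_1$s in $\mathbf{F}$ would force $u_1$ to be a subblock of an element of $\mathbf{F}$.

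The main verification is then that $y \in X_{\mathbf{F}}^{\text{inf}}$. A finite subblock $y_a\cdots y_b$ falls into one of three cases: (i) $b\leq n-1$, where it equals $x'_a\cdots x'_b$; (ii) $a\geq 2$, where it equals $x''_a\cdots x''_b$ (using the overlap agreement); or (iii) $a\leq 1$ and $b\geq n$, where it contains all of $u$. Cases (i)--(ii) force the block to avoid $\mathbf{F}\cap \bigcup_{k\geq 1} A^k$ since $x', x''\in X_{\mathbf{F}}^{\text{inf}}$, while case (iii) does so by the contradiction hypothesis. A parallel dichotomy handles each shifted left-tail $(y_{i+k})_{i\leq 0}$: for $k\leq n-1$ the tail coincides with a shifted left-tail of $x'$, and for $k\geq n$ the tail's support includes positions $1,\ldots,n$, so it contains $u$ and therefore cannot lie in $\mathbf{F}\cap A^{\Z^-}$.

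I expect the main obstacle to be this last verification involving left-infinite forbidden words. Elements of $\mathbf{F}\cap A^{\Z^-}$ describe an infinite amount of data and it is a priori conceivable that the glued sequence $y$ has a forbidden past even when $x'$ and $x''$ individually do not. The case split above resolves this by ensuring that the only ``mixed'' tails end at a coordinate $\geq n$ and hence contain $u$ in their support; the hypothesis that $u$ is not a subblock of any element of $\mathbf{F}$ then closes the contradiction and yields (3).
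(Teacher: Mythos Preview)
Your argument is correct. The choice of a minimal-length forbidden subblock gives (1) and (2) immediately, and the gluing of $x'$ (realizing $u_1\cdots u_{n-1}$) and $x''$ (realizing $u_2\cdots u_n$) along their overlap produces a bi-infinite $y$ whose every finite subblock and every left-infinite tail either comes from $x'$, comes from $x''$, or contains $u$; the contradiction hypothesis then rules out all three possibilities of hitting $\mathbf{F}$. The $n=1$ case with the constant sequence is also fine.

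The paper takes a different route for (3). Instead of an explicit splice, it passes to the $M$-th higher block presentation (with $M=|u|$), so that $[u]$ becomes a single letter of the recoded alphabet; one then only has to argue the one-letter case, and Corollary~\ref{COR:M_Higher_Block_Code} translates ``$[u]$ is a subword of something in $\mathbf{G}^{[M]}$'' back to ``$u$ is a subword of something in $\mathbf{F}$'' (after ruling out the padded words in $\mathbf{G}_1$ via property (2)). Your approach is more elementary and entirely self-contained, avoiding any appeal to the higher-block machinery; the paper's approach is shorter once that machinery is in place and has the pleasant feature of reducing everything to the trivial single-letter case. Both are valid, and yours would actually work verbatim in settings where the higher-block code has not been developed.
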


\begin{proof}

Recall that $X_{\mathbf{F}}^{\text{inf}}$ consists of all elements in $A^\Z$ which do not contain a subblock which is an element of $\mathbf{F}$.
Suppose that $w$ is a finite block which is not
in $B(X_{\mathbf{F}}^{\text{inf}})$. Let $u$ be a subblock of $w$ such that $u$ is not in $B(X_{\mathbf{F}}^{\text{inf}})$ and such that all proper subwords of $u$ belong to $B(X_{\mathbf{F}}^{\text{inf}})$ (note that $u$ is not necessarily unique, and it is possible that $u=w$).

If $u$ is a single letter, say $u=(a)$, then the letter $a$ does not appear in any element of $X_{\mathbf{F}}^{\text{inf}}$. But the only way that this can happen is if there is a word in $\mathbf{F}$ which contains $a$, and we are done.

%But this means that $a$ is a subword of some word in $\mathbf{F}$ (that is, $a$ is itself in $\mathbf{F}$, or $\mathbf{F}$ contains sufficient words where $a$ is a subword in such a way that $a$ becomes forbidden).

So, suppose $u$ is not a single letter, and let $M>1$ be the length of $u$. Let $\mathbf{G}:=\mathbf{G}_1\cup\mathbf{G}_2$, where $$\mathbf{G}_1:=\{v\in A^M:\ (v_1\ldots v_k)\in\mathbf{F},\text{ for some } 1\leq k< M\}\qquad
\text{and}\qquad
\mathbf{G}_2:=\mathbf{F}\cap\bigcup_{k\geq M}A^k.$$

It is direct that $X_{\mathbf{G}}^{\text{inf}}=X_{\mathbf{F}}^{\text{inf}}$. Furthermore, $u$ is not a prefix of any word in $\mathbf{G}_1$, because that would imply existence of a proper subword of $u$ in $\mathbf{F}$, contradicting that any proper subword of $u$ belongs to $B(X_{\mathbf{F}}^{\text{inf}})$.
Since every element of $\mathbf{G}$ has length greater than or equal to $M$, it follows from Corollary \ref{COR:M_Higher_Block_Code} that ${X_{\mathbf{F}}^{\text{inf}}}^{[M]}:=\Xi^{[M]}(X_{\mathbf{F}}^{\text{inf}})=X_{\mathbf{G}^{[M]}}^{\text{inf}}$.

Now $[u]:=\Xi^{[M]}(u)$ is a letter which does not appear in any element of ${X_{\mathbf{F}}^{\text{inf}}}^{[M]}$, so as before $[u]$ is a subword of some word of $\mathbf{G}^{[M]}$ (possibly $[u]$ is itself in $\mathbf{G}^{[M]}$). But this means that $u$ is a subblock of some element of $\mathbf{G}$. Since $\mathbf{G}_1$ does not contain any block with $u$ as subblock, $u$ appears as subword of at least one element of $\mathbf{G}_2\subset\mathbf{F}$.

\end{proof}

Note that the above lemma is also true if we replace $X_{\mathbf{F}}^{\text{inf}}$ with the set of the infinite-length elements of a Ott-Tomforde-Willis one-sided shift space (defining $\Xi^{[M]}$ conveniently) and even if we replace it with a classical shift space over a finite alphabet.

We now define the notion of a minimal set of forbidden words and show that, for shifts given by a minimal set of forbidden words, the language of the set of infinite sequences in the shift does not depend on the left infinite forbidden words (that is, does not depend on left infinite restrictions).

\begin{defn}\label{minimal}
Let $\mathbf{F}\subset A^{\Z^-}\cup\bigcup_{k\geq 1}A^k$. We say that $\mathbf{F}$ is {\em minimal} if whenever $w\in\mathbf{F}$ and $u\in \bigcup_{k\geq 1}A^k$ is a proper finite subblock of $w$, then $u\in B(X_{\mathbf{F}})$.
\end{defn}

%\begin{prop} Let $\Lambda=X_{\mathbf{F}}$ be a shift space. Then $\mathbf{F}$ is minimal if, and only if, given $\mathbf{G}$ such that $\Lambda=X_{\mathbf{G}}$ and, for all $u\in\mathbf{G}$, there exists $w\in\mathbf{F}$ such that $u$ is subblock of $w$, then $\mathbf{G}=\mathbf{F}$.
%\end{prop}
%Prop. acima esta errada. Tem que substituir F e G por F' e G'...e verificar o enunciado. Nao sei se vale a pena....

We now show that every shift space can be defined by a minimal set of forbidden words.

\begin{prop} Let $\Lambda$ be a shift space. Then we can find a minimal set $\mathbf{F}$ such that $\Lambda=X_{\mathbf{F}}$.
\end{prop}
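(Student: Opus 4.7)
The plan is to explicitly construct a minimal $\mathbf{F}$ by collecting only the ``irreducible'' forbidden patterns. Specifically, I would set $\mathbf{F}=\mathbf{F}_1\cup\mathbf{F}_2$, where
\[
\mathbf{F}_1 := \bigl\{u\in \textstyle\bigcup_{k\geq 1}A^k:\ u\notin B(\Lambda) \text{ and every proper subblock of } u \text{ lies in } B(\Lambda)\bigr\}
\]
and
\[
\mathbf{F}_2 := \bigl\{w\in A^{\Z^-}:\ w\notin B_{\text{linf}}(\Lambda) \text{ and every finite subblock of } w \text{ lies in } B(\Lambda)\bigr\}.
\]
With this definition, minimality will follow immediately once I establish $X_{\mathbf{F}}=\Lambda$: any proper finite subblock of an element of $\mathbf{F}_1$ lies in $B(\Lambda)$ by construction, and any finite subblock of an element of $\mathbf{F}_2$ likewise lies in $B(\Lambda)$, so both fall in $B(X_{\mathbf{F}})=B(\Lambda)$.

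The inclusion $\Lambda\subseteq X_{\mathbf{F}}$ is the easy direction: elements of $\Lambda^{\text{inf}}$ manifestly have no finite subblock in $\mathbf{F}_1$ (since all their finite subblocks are in $B(\Lambda)$) and no left-infinite subblock in $\mathbf{F}_2$ (since all their left-infinite subblocks lie in $B_{\text{linf}}(\Lambda)$). Thus $\Lambda^{\text{inf}}\subseteq X_{\mathbf{F}}^{\text{inf}}$, and by Lemma \ref{infdense2} together with the fact that $X_{\mathbf{F}}$ is closed (Proposition \ref{shiftforbiddenwords}), one gets $\Lambda=\overline{\Lambda^{\text{inf}}}\subseteq X_{\mathbf{F}}$.

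For the reverse inclusion $X_{\mathbf{F}}\subseteq\Lambda$, I would argue by contradiction using the maximal set
$\mathbf{F}' := \bigl[A^{\Z^-}\cup\bigcup_{k\geq 1}A^k\bigr]\setminus\bigl[B_{\text{linf}}(\Lambda)\cup B(\Lambda)\bigr]$
which satisfies $X_{\mathbf{F}'}=\Lambda$ by the proof of Proposition \ref{shiftforbiddenwords}. Take $y\in X_{\mathbf{F}}^{\text{inf}}$ and suppose, for contradiction, that $y\notin\Lambda^{\text{inf}}$. Then either some finite subblock $v$ of $y$ lies outside $B(\Lambda)$, or some left-infinite subblock $w$ of $y$ lies outside $B_{\text{linf}}(\Lambda)$. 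In the finite case, Lemma \ref{LEMMA:Forbidden_words} (applied to $\mathbf{F}'$) produces a subblock $u$ of $v$ with $u\notin B(\Lambda)$ and every proper subblock of $u$ inside $B(\Lambda)$; hence $u\in\mathbf{F}_1$, contradicting $y\in X_{\mathbf{F}}^{\text{inf}}$. In the left-infinite case, if some finite subblock of $w$ were absent from $B(\Lambda)$, the previous step would again produce a subblock of $y$ lying in $\mathbf{F}_1$, a contradiction; hence every finite subblock of $w$ lies in $B(\Lambda)$, forcing $w\in\mathbf{F}_2$, which also contradicts $y\in X_{\mathbf{F}}^{\text{inf}}$. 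Thus $X_{\mathbf{F}}^{\text{inf}}\subseteq\Lambda^{\text{inf}}$, and taking closures gives $X_{\mathbf{F}}=\Lambda$.

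The main obstacle is isolating forbidden behavior in the left-infinite coordinates: one must rule out the possibility that an obstruction to $y\in\Lambda^{\text{inf}}$ comes from a left-infinite word whose every finite subblock is legal but which still falls outside $B_{\text{linf}}(\Lambda)$, and this is exactly what the set $\mathbf{F}_2$ is designed to capture. Lemma \ref{LEMMA:Forbidden_words} plays a central role in handling the finite subblocks uniformly, allowing one to reduce any illegal finite word to an irreducible one inside $\mathbf{F}_1$.
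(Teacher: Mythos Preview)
Your proof is correct and follows essentially the same strategy as the paper: define $\mathbf{F}$ to consist of the ``irreducible'' forbidden patterns (finite words not in $B(\Lambda)$ all of whose proper subblocks are legal, together with left-infinite words not in $B_{\text{linf}}(\Lambda)$ all of whose finite subblocks are legal), and invoke Lemma~\ref{LEMMA:Forbidden_words} to reduce any illegal finite block to one lying in $\mathbf{F}_1$. Your write-up is in fact more explicit than the paper's, which simply writes down the set $\mathbf{F}$ and appeals to Lemma~\ref{LEMMA:Forbidden_words} without spelling out the verification that $X_{\mathbf{F}}^{\text{inf}}=\Lambda^{\text{inf}}$ or the minimality check.
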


\begin{proof}By Proposition \ref{shiftforbiddenwords}, we can find $\mathbf{G}\subset A^{\Z^-}\cup\bigcup_{k\geq 1}A^k$ such that $\Lambda=X_{\mathbf{G}}$. We now apply Lemma \ref{LEMMA:Forbidden_words} and define $\mathbf{F}$ as the set of all words $u\in A^{\Z^-}\cup\bigcup_{k\geq 1}A^k$ such that either $u\in\mathbf{G}$ and all finite proper subblocks of $u$ belong to $B(\Lambda)$, or
$u$ is a finite proper subblock of some $w\in \mathbf{G}$ and all proper subblocks of $u$ are in $B(\Lambda)$.
\end{proof}

We note that if $\Lambda=X_{\mathbf{F}}$ is a shift space and $\mathbf{F}$ is minimal, then it is not possible to replace restrictions given by left-infinite sequences by restrictions given only by finite words.

We now give examples and nonexamples of minimal sets.

\begin{ex}\label{minimal_examples}

In the following examples, we write our set of forbidden words $\mathbf{F}$ as a union $\mathbf{F}=\mathbf{F}'\cup\mathbf{F}''$, with $\mathbf{F}'\subset \bigcup_{k\geq 1}A^k$ and $\mathbf{F}''\subset A^{\Z^-}$.

\begin{enumerate}
\item[a)] Let $A=\N$, let $\mathbf{F}'=\emptyset$, and let $$\mathbf{F}''=\{(x_i)_{i\leq 0}\in  A^{\Z^-}: \inf\{i:\ x_i\neq 1\}=-\infty\}.$$ Then $\Lambda:=X_\mathbf{F}$ is a two-sided shift space with $B(\Lambda)=B(\Sigma_A^\Z)$ such that, for any $x\in\Lambda$, there exists $k\in\Z$ such that $x_i=1$ for all $i\leq k$. Clearly $\mathbf{F}$ is minimal and, furthermore, it is not possible to replace the restrictions given by $\mathbf{F}''$ by restrictions given only by finite words.

\item[b)] Take $k\in\N$, let $A=\N$, let $\mathbf{F}'=\{11\}$, let $$\mathbf{F}''=\{(x_i)_{i\leq 0}\in  A^{\Z^-}:\ x_{i-k}\neq x_i,\ \forall i\leq 0\},$$ and let $\Lambda:=X_\mathbf{F}$. Then  $\Lambda^{\text{fin}}=\{\O\}$ and $$\Lambda^{\text{inf}}=\{x\in \Sigma_A^\Z: \ x\ has\ period\ k,\ and\ x_ix_{i+1}\neq 11\ \forall i\in\Z\}.$$ It follows that $\mathbf{F}$ is not minimal, since any word of the form $u=u_1\ldots u_{k+1}$ with $u_{1}\neq u_{k+1}$ is a subblock of some sequence of $\mathbf{F}''$, but $u\notin B(\Lambda)$. However,
the set $$\mathbf{G}:=\{11,\ u_1\ldots u_{k+1}:\ u_{1}\neq u_{k+1}\},$$ is minimal and $\Lambda=X_{\mathbf{G}}$.

\item[c)] Let $A=\N$, let $\mathbf{F}'=\emptyset$, let  $$\mathbf{F}''=\{(x_i)_{i\leq 0}\in  A^{\Z^-}:\ \exists k\geq 1,\ x_{i-k}\neq x_i,\ \forall i\leq 0\},$$ and let $\Lambda:=X_\mathbf{F}$. Then $\Lambda^{\text{fin}}=\{\O\}$ and $\Lambda^{\text{inf}}=\{x\in \Sigma_A^\Z: \ x\ is\ periodic\}$. Since $B(\Lambda)=B(\Sigma_A^\Z)$, $\mathbf{F}$ must be minimal.

\item[d)] Let $A=\N$, let
$$\mathbf{F}'=\{n2: \ n\in A\},$$
$$\mathbf{F}''=\{(x_i)_{i\leq 0}\in  A^{\Z^-}:\ \exists i\leq 0,\ x_i=1\}.$$ Here, $\mathbf{F}$ is not minimal. In fact, 2 is a proper subblock of all words in $\mathbf{F}'$ and it does not belong to $B(X_{\mathbf{F}})$, and, on the other hand, 1 is a proper subblock of all words in $\mathbf{F}''$ and it does not belong to $B(X_{\mathbf{F}})$. Here, the set $\mathbf{G}=\{1,2\}$ is a minimal set such that $X_{\mathbf{F}}=X_{\mathbf{G}}=\Sigma_{A\setminus\{1,2\}}^\Z$.
\end{enumerate}
\end{ex}

%Let $A$ be an alphabet and $\mathbf{F}'\subset\bigcup_{k\in\N}A^k$. We recall that we can define the Ott-Tomforde-Willis one-sided shift space $\hat X_{\mathbf{F}'}\subset\Sigma_A^\N$ as the closure of the set of all infinite sequences in which there are not any word of $\mathbf{F}'$ (see \cite[Definition 3.11]{OTW14}).

We now show that if $\mathbf{F}$ is minimal, then the language of $X_\mathbf{F}$ is not affected by the left-infinite words in $\mathbf{F}$.
\begin{prop}\label{minimality-language} Let $\mathbf{F}\subset A^{\Z^-}\cup\bigcup_{k\geq 1}A^k$ be a minimal set, where $\mathbf{F}=\mathbf{F}'\cup\mathbf{F}''$ with $\mathbf{F}'\subset\bigcup_{k\geq 1}A^k$ and $\mathbf{F}''\subset A^{\Z^-}$. Then $B(X_{\mathbf{F}}^{\text{inf}})=B(X_{\mathbf{F}'}^{\text{inf}})$, that is, the restrictions imposed by $\mathbf{F}''$ do not affect
the language of $X_{\mathbf{F}}^{\text{inf}}$.
\end{prop}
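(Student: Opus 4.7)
The containment $B(X_{\mathbf{F}}^{\text{inf}}) \subseteq B(X_{\mathbf{F}'}^{\text{inf}})$ is immediate: since $\mathbf{F}' \subseteq \mathbf{F}$, any bi-infinite sequence that avoids $\mathbf{F}$ also avoids $\mathbf{F}'$, hence $X_{\mathbf{F}}^{\text{inf}} \subseteq X_{\mathbf{F}'}^{\text{inf}}$. For the reverse inclusion I would argue by contradiction. Suppose there exists a finite word $w \in B(X_{\mathbf{F}'}^{\text{inf}}) \setminus B(X_{\mathbf{F}}^{\text{inf}})$. Apply Lemma \ref{LEMMA:Forbidden_words} to $\mathbf{F}$ and $w$ to obtain a subblock $u$ of $w$ satisfying: (i) $u \notin B(X_{\mathbf{F}}^{\text{inf}})$; (ii) every proper subblock of $u$ lies in $B(X_{\mathbf{F}}^{\text{inf}})$; and (iii) $u$ is a subblock of some $v \in \mathbf{F}$. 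Since $u$ is a subblock of $w$ and $w \in B(X_{\mathbf{F}'}^{\text{inf}})$, we also have $u \in B(X_{\mathbf{F}'}^{\text{inf}})$.

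The argument then splits into three cases on $v$. If $v \in \mathbf{F}'$ and $u = v$, then $u$ is itself a finite forbidden word of $X_{\mathbf{F}'}^{\text{inf}}$, forcing $u \notin B(X_{\mathbf{F}'}^{\text{inf}})$ and immediately contradicting the previous paragraph. In the remaining two cases --- $v \in \mathbf{F}'$ with $u \subsetneq v$, and $v \in \mathbf{F}''$ (where $u$ is automatically a proper finite subblock of the left-infinite $v$) --- minimality of $\mathbf{F}$ yields $u \in B(X_{\mathbf{F}})$. To close the contradiction with (i), it suffices to verify the identity $B(X_{\mathbf{F}}) = B(X_{\mathbf{F}}^{\text{inf}})$.

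The nontrivial direction of that identity is $B(X_{\mathbf{F}}^{\text{fin}}) \subseteq B(X_{\mathbf{F}}^{\text{inf}})$, which I would establish directly from the definition of $X_{\mathbf{F}}^{\text{fin}}$: each $x \in X_{\mathbf{F}}^{\text{fin}} \setminus \{\O\}$ satisfies $|\F_1(X_{\mathbf{F}}^{\text{inf}}, (x_{i+l(x)})_{i \leq 0})| = \infty$, so in particular the left-infinite past $(x_{i+l(x)})_{i \leq 0}$ is a prefix (after shifting) of some element of $X_{\mathbf{F}}^{\text{inf}}$, hence lies in $B_{\text{linf}}(X_{\mathbf{F}}^{\text{inf}})$. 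Consequently every finite subblock of $x$ is a subblock of some bi-infinite element of $X_{\mathbf{F}}^{\text{inf}}$, as desired. (If $|X_{\mathbf{F}}^{\text{inf}}| < \infty$ then $X_{\mathbf{F}}^{\text{fin}} = \emptyset$ and the identity is trivial.)

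The main obstacle is really just bookkeeping: all the substantive content is already encapsulated in Lemma \ref{LEMMA:Forbidden_words} (which produces a locally minimal forbidden subblock) and in the definition of minimality (which forces such a subblock to be all of $v$ in the $\mathbf{F}'$ case). The auxiliary identity $B(X_{\mathbf{F}}) = B(X_{\mathbf{F}}^{\text{inf}})$ is the one small extra ingredient and follows cleanly from the built-in extension requirement placed on finite sequences in $X_{\mathbf{F}}^{\text{fin}}$.
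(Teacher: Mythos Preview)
Your argument is correct and follows essentially the same route as the paper: apply Lemma~\ref{LEMMA:Forbidden_words} to produce a subblock $u$ of $w$ that is itself a subblock of some $v\in\mathbf{F}$, and then use minimality to force $u=v\in\mathbf{F}'$. The paper compresses the case analysis into the single line ``since $\mathbf{F}$ is minimal we have that $u$ is in $\mathbf{F}$, and since $u$ is finite, then $u\in\mathbf{F}'$''; your version unpacks this and makes explicit the auxiliary fact that any $u\in B(X_{\mathbf{F}})\cap\bigcup_{k\geq 1}A^k$ already lies in $B(X_{\mathbf{F}}^{\text{inf}})$, which the paper uses tacitly. One small comment: the identity $B(X_{\mathbf{F}})=B(X_{\mathbf{F}}^{\text{inf}})$ is not literally true as stated, since $B(X_{\mathbf{F}})$ may contain blocks involving the letter $\o$; what you actually need (and what your justification establishes) is only the inclusion $B(X_{\mathbf{F}})\cap\bigcup_{k\geq 1}A^k\subseteq B(X_{\mathbf{F}}^{\text{inf}})$, which suffices because $u\in\bigcup_{k\geq 1}A^k$.
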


\begin{proof}
Since $\mathbf{F}'\subset \mathbf{F}$, it is straighforward that $B(X_{\mathbf{F}}^{\text{inf}})\subset B(X_{\mathbf{F}'}^{\text{inf}})$. To prove that $B(X_{\mathbf{F}'}^{\text{inf}})\subset B(X_{\mathbf{F}}^{\text{inf}})$, let $w\in B(X_{\mathbf{F}'}^{\text{inf}})$ and suppose by contradiction that $w\notin B(X_{\mathbf{F}}^{\text{inf}})$.  Lemma \ref{LEMMA:Forbidden_words} implies that there exists a subword $u$ of $w$ such that $u\notin B(X_{\mathbf{F}}^{\text{inf}})$ and $u$ is a subword of some word in $\mathbf{F}$. However, since $\mathbf{F}$ is minimal we have that $u$ is in $\mathbf{F}$, and since $u$ is finite, then $u\in \mathbf{F}'$ which contradicts the fact that
$w\in B(X_{\mathbf{F}'}^{\text{inf}})$.
Hence $B(X_{\mathbf{F}'}^{\text{inf}})\subset B(X_{\mathbf{F}}^{\text{inf}})$.

\end{proof}

The following result gives a relationship between two-sided shift spaces defined in this work and  Ott-Tomforde-Willis one-sided shift spaces.

\begin{prop}\label{projection} Let $\Lambda\subseteq\Sigma_A^\Z$ be a shift space such that $\Lambda=X_\mathbf{F}$, where $\mathbf{F}$ is minimal. Let $\mathbf{F}=\mathbf{F'}\cup\mathbf{F}''$ with $\mathbf{F}'\subset\bigcup_{k\geq 1}A^k$, $\mathbf{F}''\subset A^{\Z^-}$ and let $\pi:\Lambda\to\Sigma_A^\N$ be the projection onto the positive coordinates.
\begin{enumerate}
\item[(i)] If $|\LA|=\infty$ then $\pi(\Lambda)$ is dense in the one-sided Ott-Tomforde-Willis shift space $\hat X_{\mathbf{F}'}\subset \Sigma_A^\N$;

\item[(ii)] If $|\LA|<\infty$ then $\pi(\Lambda\setminus\{\O\})$ is the standard shift space of $\hat X_{\mathbf{F}'}\subset (\LA)^\N$.
\end{enumerate}
\end{prop}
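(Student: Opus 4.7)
\bigskip

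\noindent\textbf{Proof proposal.} The plan is to exploit Proposition \ref{minimality-language}, which tells us that under minimality we have $B(X_{\mathbf{F}}^{\text{inf}})=B(X_{\mathbf{F}'}^{\text{inf}})$, so the two-sided finite language and the one-sided finite language coincide; and to exploit the two-sided infinite extension property of $\Lambda$, together with shift-invariance, to fabricate infinite sequences in $\Lambda^{\text{inf}}$ whose right halves witness any prescribed finite prefix.

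For part (i), I would first establish the set-theoretic inclusion $\pi(\Lambda)\subseteq\hat X_{\mathbf{F}'}$ by a case analysis on the length $l(x)$ of $x\in\Lambda$. If $l(x)=\infty$, then every finite subblock of $\pi(x)$ is in $B(\Lambda^{\text{inf}})=B(X_{\mathbf{F}'}^{\text{inf}})$ by Proposition \ref{minimality-language}, so $\pi(x)$ avoids $\mathbf{F}'$ and sits in $\hat X_{\mathbf{F}'}^{\text{inf}}$. If $l(x)\geq 0$ is finite, then $\pi(x)$ is a finite sequence, and we must check that it satisfies the one-sided infinite extension property; this is where Proposition \ref{minimality-language} is again crucial: the two-sided infinite extension property gives infinitely many letters $a$ with $(x_i)_{i\leq l(x)}a\in B_{\text{linf}}(\Lambda)$, so each such $a$ forces $x_1\ldots x_{l(x)}a\in B(\Lambda^{\text{inf}})=B(X_{\mathbf{F}'}^{\text{inf}})$, supplying infinitely many one-sided followers. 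The case $l(x)<0$ is trivial since $\pi(x)=\O\in\hat X_{\mathbf{F}'}$. For density it suffices to approximate $\hat X_{\mathbf{F}'}^{\text{inf}}$, since it is dense in $\hat X_{\mathbf{F}'}$. Given $y\in\hat X_{\mathbf{F}'}^{\text{inf}}$ and a basic neighbourhood $Z(y_1\ldots y_n,F)$, we have $y_{n+1}\notin F$ by the definition of the cylinder, so $y_1\ldots y_{n+1}\in B(X_{\mathbf{F}'}^{\text{inf}})=B(\Lambda^{\text{inf}})$; hence some $w\in\Lambda^{\text{inf}}$ contains $y_1\ldots y_{n+1}$ as a subblock, and an appropriate shift $\sigma^k(w)\in\Lambda$ projects into $Z(y_1\ldots y_n,F)$.

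For part (ii), the finiteness of $\LA$ forces $\Lambda^{\text{fin}}\subseteq\{\O\}$ (no finite non-empty sequence can admit infinitely many followers), so $\Lambda\setminus\{\O\}=\Lambda^{\text{inf}}$ and $\hat X_{\mathbf{F}'}$ coincides with the classical one-sided shift over $\LA$. The inclusion $\pi(\Lambda^{\text{inf}})\subseteq\hat X_{\mathbf{F}'}$ is the same argument as in (i). For the reverse inclusion, given $y\in\hat X_{\mathbf{F}'}$, I would use Proposition \ref{minimality-language} to obtain, for every $n\in\N$, an element $w^{(n)}\in\Lambda^{\text{inf}}$ and a shift $k_n\in\Z$ such that $\sigma^{k_n}(w^{(n)})$ agrees with $y$ on the first $n$ positive coordinates. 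By compactness of $\Lambda$ there is a subnet (or, via a diagonal argument on the finitely many possible values at each coordinate, a subsequence) converging to some $x\in\Lambda$; this limit satisfies $x_i=y_i$ for every $i\geq 1$, and $x\in\Lambda^{\text{inf}}$ because the coordinates of $x$ take values in $\LA$ rather than $\o$.

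The main obstacle I foresee is confirming, in part (ii), that the limit $x$ really lies in $\Lambda^{\text{inf}}$ (equivalently, is not $\O$): convergence in $\tau_{\Sigma_A^\Z}$ requires agreement on arbitrarily long \emph{left} tails, so one needs the diagonal extraction on the left coordinates, which works precisely because $\LA$ is finite. The analogous step fails when $|\LA|=\infty$, which is exactly why part (i) only yields density rather than equality.
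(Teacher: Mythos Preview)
Your proposal is correct and follows essentially the same strategy as the paper: both arguments hinge on the language identity $B(\Lambda^{\text{inf}})=B(\hat X_{\mathbf{F}'}^{\text{inf}})$ from Proposition~\ref{minimality-language}, use it to get the inclusion $\pi(\Lambda)\subseteq\hat X_{\mathbf{F}'}$ and the density of $\pi(\Lambda^{\text{inf}})$ in $\hat X_{\mathbf{F}'}^{\text{inf}}$, and then observe for (ii) that $\Lambda^{\text{fin}}\subset\{\O\}$ when $|\LA|<\infty$. The only notable differences are that in (i) the paper handles the finite sequences via continuity of $\pi$ at points with $l(x)\geq 0$ (Proposition~\ref{k-projection}) rather than your direct follower-set computation, and in (ii) the paper simply asserts ``the result follows from density'' whereas you spell out the diagonal-extraction argument on the left coordinates---your version here is in fact the more complete one.
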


\begin{proof} {\color{white}.}

\begin{enumerate}
\item[(i)] We will prove that $\pi(\Lambda^{\text{inf}})$ is dense in $\hat X_{\mathbf{F}'}^{\text{inf}}$ and that any element of $\pi(\Lambda^{\text{fin}})$ is a finite sequence which satisfies the infinite extension property in $\hat X_{\mathbf{F}'}$. Hence we will have
    $$\overline{\pi(\Lambda)}=\overline{\pi(\Lambda^{\text{inf}})}=\overline{\hat X_{\mathbf{F}'}^{\text{inf}}}=\hat X_{\mathbf{F}'}.$$

For this, first note that $$B(\pi(\Lambda^{\text{inf}}))=B(\Lambda^{\text{inf}})=B(X_{\mathbf{F}'}^{\text{inf}})=B(\hat X_{\mathbf{F}'}^{\text{inf}}),$$
where the first equality is due to $\s(\Lambda)=\Lambda$, the second equality is due to Proposition \ref{minimality-language} and the third equality is due the definitions of $X_{\mathbf{F}'}^{\text{inf}}$ and $\hat X_{\mathbf{F}'}^{\text{inf}}$.

The above means that $\pi(\Lambda^{\text{inf}})\subset \hat X_{\mathbf{F}'}^{\text{inf}}$. Now, take $x\in \hat X_{\mathbf{F}'}^{\text{inf}}$,
%let $Z$ be a generalized cylinder of $\hat X_{\mathbf{F}'}$ containing $x$, say
%$$Z=\{y\in \hat X_{\mathbf{F}'}: y_i=x_i,\ \forall 1\leq i\leq M\},$$ for some $M\geq 1$.
and find $y$ such that $x\in Z(y)$. It follows that $y\in B(\hat X_{\mathbf{F}'}^{\text{inf}})=B(\pi(\Lambda^{\text{inf}}))$ and, therefore, there exists $z\in\pi(\Lambda^{\text{inf}})$ such that $z_i=x_i=y_i$ for all $1\leq i\leq l(y)$. Hence $z\in Z(y)\cap \pi(\Lambda^{\text{inf}})$, proving that $\pi(\Lambda^{\text{inf}})$ is dense in $\hat X_{\mathbf{F}'}^{\text{inf}}$.

Now, observe that $\pi(x)=\O$ whenever $l(x)\leq 0$, and $\pi$ is continuous at $x\in \Lambda^{\text{fin}}$ whenever $l(x)\geq 0$ by Proposition \ref{k-projection}. The last statment implies that any finite sequence $\pi(x)\in\pi(\Lambda)$ will have the infinite extension property.

\item[(ii)] In this case $\Lambda^{\text{fin}}\subset\{\O\}$. Therefore,  $\Lambda^{\text{inf}}=\Lambda\setminus\{\O\}$ and  the result follows from the fact that $\pi(\Lambda^{\text{inf}})$ is dense in $\hat X_{\mathbf{F}'}^{\text{inf}}$.
\end{enumerate}

\end{proof}

We apply Proposition \ref{projection} to each of the examples from Example \ref{minimal_examples}.

\begin{ex}\label{projection_examples}

\begin{enumerate}
\item[a)] Let $\Lambda=X_{\mathbf{F}}$, where $\mathbf{F}$ is the set given in Example \ref{minimal_examples}.a. We have that $\LA=A$ and $\pi(\Lambda)=\hat X_{\mathbf{F}'}=\Sigma_A^\N$.

\item[b)] Let $\mathbf{F}$ and $\mathbf{G}$ be as in Example \ref{minimal_examples}.b, and let $\Lambda=X_{\mathbf{F}}$. Then $\LA=A$ and $\pi(\Lambda)$ is the set consisting of $\O$ together with all infinite-length elements of $\Sigma_A^\N$ with period $k$ which do not contain the word 11. Hence, the closure of $\pi(\Lambda)$ is $\hat X_{\mathbf{F}'\cup \mathbf{G}}\subset \Sigma_A^\N$ which, aside from the elements of $\pi(\Lambda)$, contains all finite sequences with period $k$ where the word 11 does not appear.

\item[c)] Let $\Lambda=X_{\mathbf{F}}$, where $\mathbf{F}$ is the set given in Example \ref{minimal_examples}.c. Then $\LA=A$, and $\pi(\Lambda)=\{\O\}\cup\{x\in\Sigma_A^\N:\ x\ is\ periodic\}$. The closure of $\pi(\Lambda)$ is $\hat X_{\mathbf{F}'}=\Sigma_A^\N$.

\item[d)] Let $\Lambda=X_{\mathbf{F}}$, where $\mathbf{F}$ is the set given in Example \ref{minimal_examples}.d. Here, $\LA=A\setminus\{1,2\}$ and $\pi(\Lambda)=\hat X_{\mathbf{F}'}=\Sigma_A^\N$.
\end{enumerate}

\end{ex}

Proposition \ref{projection} gives us a way of obtaining an Ott-Tomforde-Willis one-sided shift space from a two-sided shift space. We now use Remark \ref{inverse_limit1} to construct a two-sided shift space from a one-sided shift space, although in general these two operations are not inverses of each other, see Remarks \ref{contraexemplo1} and \ref{contraexemplo2}.

\begin{prop}\label{inverse_limit2}
Let $\mathbf{F}\subset\bigcup_{k\in\N}A^k$ be a minimal set and consider Ott-Tomforde-Willis one-sided shift $\hat X_{\mathbf{F}}$. Consider the set obtained by taking the inverse limit of $\hat X_{\mathbf{F}}$,  $$(\hat X_{\mathbf{F}})^\s:=\{(\mathcal{X}_i)_{i\in\Z}:\ \forall i\in\Z\ \mathcal{X}_i\in\hat X_{\mathbf{F}}\text{ and }\s(\mathcal{X}_i)=\mathcal{X}_{i+1}\}$$ and define $\Lambda\in\Sigma_A^\Z$ by $$\Lambda:=p\big((\hat X_{\mathbf{F}})^\s\big)=\{(x_i)_{i\in\Z}\in\Sigma_A^\Z:\ (x_{i+j-1})_{j\in\N}\in\Lambda\ \forall i\in\Z\},$$ where $p$ is the map defined in Remark \ref{inverse_limit1}.

\begin{enumerate}
\item[(i)] If either $|L_{\hat X_{\mathbf{F}}}|=\infty$ or $|\hat X_{\mathbf{F}}|<\infty$, then $\Lambda=X_{\mathbf{F}}$;

\item[(ii)] If $|L_{\hat X_{\mathbf{F}}}|<\infty$ and $|\hat X_{\mathbf{F}}|=\infty$, then $\Lambda\cup\{\O\}=X_{\mathbf{F}}$.

\end{enumerate}

\end{prop}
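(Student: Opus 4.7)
The plan is to combine the concrete description
\[
\Lambda = \{x \in \Sigma_A^\Z : (x_{i+j-1})_{j\in\N} \in \hat X_{\mathbf{F}} \text{ for every } i \in \Z\},
\]
which is immediate from Remark \ref{inverse_limit1} and the definition of $p$, with a case analysis on the length $l(x)$. For $x \in \Sigma_A^{\Z\ \text{inf}}$ every finite subword of $x$ is a finite subword of some right-tail $(x_{i+j-1})_{j \in \N}$, and conversely, so each right-tail avoids $\mathbf{F}$ precisely when $x$ itself does; this yields $\Lambda^{\text{inf}} = X_{\mathbf{F}}^{\text{inf}}$ with no extra hypothesis.

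The empty sequence is then easy: $\O$ belongs to $\Lambda$ iff $\O \in \hat X_{\mathbf{F}}$, which by the definition of the Ott-Tomforde-Willis shift is equivalent to $|L_{\hat X_{\mathbf{F}}}| = \infty$. Combined with the language identification $L_{\hat X_{\mathbf{F}}} = L_{X_{\mathbf{F}}}$ derived from minimality of $\mathbf{F}$ via Proposition \ref{minimality-language} and Lemma \ref{LEMMA:Forbidden_words}, this already disposes of the subcase of (i) where $|\hat X_{\mathbf{F}}| < \infty$: there both $\hat X_{\mathbf{F}}^{\text{fin}}$ and $X_{\mathbf{F}}^{\text{fin}}$ reduce to the empty set, so $\Lambda = \Lambda^{\text{inf}} = X_{\mathbf{F}}^{\text{inf}} = X_{\mathbf{F}}$. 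In case (ii) the finite-alphabet bound $|L_{\hat X_{\mathbf{F}}}| < \infty$ gives $|L_{X_{\mathbf{F}}}| < \infty$ and hence forces every follower set $\F_1(X_{\mathbf{F}}^{\text{inf}},\cdot)$ to be finite, so $X_{\mathbf{F}}^{\text{fin}} = \{\O\}$; since at the same time $\O \notin \hat X_{\mathbf{F}}$ forces $\O \notin \Lambda$, one obtains $\Lambda \cup \{\O\} = X_{\mathbf{F}}$.

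The main work is the non-empty finite part in the remaining subcase of (i), namely $|L_{\hat X_{\mathbf{F}}}| = \infty$. For $x$ with $-\infty < l(x) = k < \infty$ and $x \neq \O$, membership in $\Lambda$ amounts to each finite prefix $(x_i,\ldots,x_k)$ of the left tail lying in $\hat X_{\mathbf{F}}^{\text{fin}}$, while membership in $X_{\mathbf{F}}$ asks for an infinite follower set in $X_{\mathbf{F}}^{\text{inf}}$ for the full left-infinite tail $(x_j)_{j \leq k}$. To pass from the former to the latter I would, for each candidate follower $a$, assemble the nested nonempty compact cylinders
\[
W_i := \{w \in \hat X_{\mathbf{F}} : w_1 = x_i,\ldots,w_{k-i+1}=x_k,\ w_{k-i+2}=a\}
\]
together with the shift-compatibility maps $\sigma : W_i \to W_{i+1}$, and apply a Mittag-Leffler/compactness argument inside the compact space $\hat X_{\mathbf{F}}$ to produce a point in the inverse limit; this point corresponds under $p$ to a bi-infinite element of $X_{\mathbf{F}}^{\text{inf}}$ that witnesses $a$ as a two-sided follower, the reverse direction being direct by passing to right-tails. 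The main obstacle is to control how the monotonically decreasing family $\F_1(\hat X_{\mathbf{F}}^{\text{inf}},(x_i,\ldots,x_k))$ behaves as $i \to -\infty$: one must preclude the scenario in which these sets shrink enough that their intersection becomes finite, and minimality of $\mathbf{F}$, through the language identification coming from Proposition \ref{minimality-language}, is exactly what must do the work of ruling out such collapse.
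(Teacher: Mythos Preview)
Your overall structure mirrors the paper's: both begin by establishing $\Lambda^{\text{inf}} = X_{\mathbf{F}}^{\text{inf}}$ (the paper uses Proposition~\ref{projection} for one inclusion), then identify $L_\Lambda = L_{X_{\mathbf{F}}} = L_{\hat X_{\mathbf{F}}}$, and treat the finite-alphabet subcases exactly as you describe. For the inclusion $X_{\mathbf{F}}^{\text{fin}} \subset \Lambda^{\text{fin}}$ when $|L_{\hat X_{\mathbf{F}}}| = \infty$, the paper again invokes Proposition~\ref{projection}, which amounts to your ``passing to right-tails.''

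The gap is in the inclusion $\Lambda^{\text{fin}} \subset X_{\mathbf{F}}^{\text{fin}}$. You correctly isolate this as the hard direction and reduce it, via compactness, to showing that $\bigcap_{i\leq k}\F_1(\hat X_{\mathbf{F}}^{\text{inf}},(x_i,\ldots,x_k))$ is infinite; but your claim that minimality, through Proposition~\ref{minimality-language}, rules out collapse of this intersection is not substantiated. That proposition only compares the finite-block languages $B(X_{\mathbf{F}}^{\text{inf}})$ and $B(X_{\mathbf{F}'}^{\text{inf}})$, and since here $\mathbf{F}'' = \emptyset$ its statement is vacuous; it says nothing about follower sets of left-infinite words. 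In fact the collapse you worry about can occur: over $A=\N$ with $\mathbf{F}=\{(n,n-1,\ldots,1,n):n\geq 1\}$ (one checks this $\mathbf{F}$ is minimal), the one-sided follower set of $(n,n-1,\ldots,1)$ is $\{a:a>n\}$, so $x=(\ldots,3,2,1)$ has every right-tail in $\hat X_{\mathbf{F}}$ and hence lies in $\Lambda^{\text{fin}}$, while its left-infinite tail has empty two-sided follower set, so $x\notin X_{\mathbf{F}}^{\text{fin}}$. The paper's own argument is no more complete at this step: it writes only ``As before,'' referring back to the observation that elements of $\Lambda^{\text{inf}}$ contain no subblock from $\mathbf{F}$, but that is not what membership in $X_{\mathbf{F}}^{\text{fin}}$ demands. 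So you have correctly located the difficulty, but neither your proposed mechanism nor the paper's one-line justification actually closes it.
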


\begin{proof}
First, let us prove that in both cases we have $\Lambda^{\text{inf}}= X_{\mathbf{F}}^{\text{inf}}$. For this, observe that $\Lambda^{\text{inf}}=p\big((\Lambda'^{\text{inf}})^\s\big)$. From the definition, elements of $\Lambda^{\text{inf}}$ do not contain any elements of $\mathbf{F}$ as subblocks, which means that $\Lambda^{\text{inf}}\subset X_{\mathbf{F}}^{\text{inf}}$. On the other hand, by Proposition \ref{projection} it follows that $\pi(X_{\mathbf{F}}^{\text{inf}})\subset\hat X_{\mathbf{F}}^{\text{inf}}$, which means that given $(x_i)_{i\in\Z}\in X_{\mathbf{F}}^{\text{inf}}$, we have that $\mathcal{X}_i=(x_{i+j-1})_{j\in\N}\in\hat X_{\mathbf{F}}^{\text{inf}}$ for all $i\in\Z$. Hence we get that $(x_i)_{i\in\Z}\in\Lambda$ and therefore $X_{\mathbf{F}}^{\text{inf}}\subset \Lambda^{\text{inf}}$.

Before we proceed, note that $\LA=L_{X_{\mathbf{F}}}=L_{\hat X_{\mathbf{F}}}$. In fact, since $\Lambda^{\text{inf}}= X_{\mathbf{F}}^{\text{inf}}$, it is straighforward that $\LA=L_{X_{\mathbf{F}}}$. Furthermore, from the definition of $\Lambda$, it follows that $\LA\subset L_{\hat X_{\mathbf{F}}}$. On the other hand if by contradiction we suppose that $a\in \LAA\setminus\LA$, then there does not exist an element in $\Lambda^{\text{inf}}$ using the letter $a$. But $\Lambda^{\text{inf}}= X_{\mathbf{F}}^{\text{inf}}$ and, since $\mathbf{F}$ is minimal, then necessarily $a\in \mathbf{F}$, which contradicts that $a\in L_{\hat X_{\mathbf{F}}}$.

Now, to complete the proof, we have two cases:

\begin{enumerate}
\item[(i)] If $|L_{\hat X_{\mathbf{F}}}|=\infty$ then the one-sided empty sequence belongs to $\hat X_{\mathbf{F}}$ and therefore the two-sided empty sequence belongs to $\Lambda$. Since $L_{X_{\mathbf{F}}}=L_{\hat X_{\mathbf{F}}}$ this implies that the two-sided empty sequence belongs to $X_{\mathbf{F}}$.

As before, we have that $\Lambda^{\text{fin}}\subset X_{\mathbf{F}}^{\text{fin}}$. On the other hand, to check that $X_{\mathbf{F}}^{\text{fin}}\subset \Lambda^{\text{fin}}$ we take $x\in X_{\mathbf{F}}^{\text{fin}}\setminus\{\O\}$ and use the same argument used for infinite sequences.

    If $|\hat X_{\mathbf{F}}|<\infty$ then $|L_{\hat X_{\mathbf{F}}}|<\infty$ and hence $\hat X_{\mathbf{F}}'$ consists of a finite number of periodic infinite sequences, which implies that $\hat X_{\mathbf{F}}^{\text{fin}}$ is empty. Therefore $\Lambda$ consists only of a finite number of periodic infinite sequences and hence $\Lambda^{\text{fin}}$ is empty. Since $\Lambda^{\text{inf}}= X_{\mathbf{F}}^{\text{inf}}$, we have that $X_{\mathbf{F}}^{\text{inf}}$ contains just a finite number of periodic infinite sequences, concluding that $X_{\mathbf{F}}^{\text{fin}}$ is also empty.

\item[(ii)] Suppose $|L_{\hat X_{\mathbf{F}}}|<\infty$ and $|\hat X_{\mathbf{F}}|=\infty$. Since $|L_{\hat X_{\mathbf{F}}}|<\infty$, it follows that $\hat X_{\mathbf{F}}$ is a classical shift space over a finite alphabet, which implies that $\hat X_{\mathbf{F}}^{\text{fin}}$ is empty. Hence $\Lambda^{\text{inf}}= X_{\mathbf{F}}^{\text{inf}}$ and contains an infinite number of infinite sequences over a finite alphabet. Since $\Lambda^{\text{fin}}$ is empty and $X_{\mathbf{F}}^{\text{fin}}=\{\O\}$, the result follows.

\end{enumerate}

\end{proof}

So using Proposition \ref{inverse_limit2} we can produce a two-sided shift space from an Ott-Tomforde-Willis one-sided shift space, and using Proposition \ref{projection} we can produce a one-sided shift space by projecting a two-sided shift space. The following two remarks show that these two operations are not necessarily the inverses of each other.
\begin{rmk}\label{contraexemplo1} Given a shift space $\Lambda\subseteq\Sigma_A^\Z$ with $|\LA|=\infty$, take $\Lambda'$ as the closure of $\pi(\Lambda)$ in $\Sigma_A^\N$. Then, in general, $p\big((\Lambda')^\s\big)\neq\Lambda$. To see this, suppose that  $\Lambda=X_{\mathbf{F}}$, where $\mathbf{F}={\mathbf{F}'\cup\mathbf{F}''}$ is minimal and $\mathbf{F}''\subset A^{\Z^-}$ nonempty. By Proposition \ref{inverse_limit2}.i $\Lambda'=\hat X_{\mathbf{F}'}$, so we have $p\big((\Lambda')^\s\big)= X_{\mathbf{F}'}$ and $\Lambda\subsetneq X_{\mathbf{F}'}$.
\end{rmk}

\begin{rmk}\label{contraexemplo2}
Given a one-sided shift space $\Lambda'\subset\Sigma_A^\N$, in general $\pi\Big(p\big((\Lambda')^\s\big)\Big)\neq\Lambda'$. For instance, take $A=\N$ and the non minimal set $\mathbf{F}=\{n1:\ n\in A\}$, so that $\Lambda':=\hat X_{\mathbf{F}}$ is the one-sided shift space of all sequences where 1 does not appear, except maybe as the first symbol. Hence, $p\big((\Lambda')^\s\big)$ is the two-sided full shift over the alphabet $B:=A\setminus\{1\}$ and, therefore, $\pi\Big(p\big((\Lambda')^\s\big)\Big)=\Sigma_B^\N\subsetneq\Lambda'$.

\end{rmk}

\section{Final discussion}\label{SEC:final}

In this work we have proposed a definition of two-sided shift spaces over countably infinite alphabets. The topology we proposed can be viewed as a two-sided version of the the space proposed by Ott-Tomforde-Willis \cite{OTW14} for one-sided shift spaces over infinite alphabets.

Although the set of sequences proposed here as the two-sided shift space can be viewed as the inverse-limit system of an Ott-Tomforde-Willis one-sided shift space, we remark that the topology considered was not the product topology (which is the usual topology for inverse-limit systems). We made this choice because the product topology does not have a basis of clopen sets. Thus, instead of the product topology, we considered a topology with a basis consisting of sets defined by specifying infinitely many coordinates and by the complement of such sets.

To contrast the two, notice that the product topology contains sets of the form $$\{x\in\Sigma_A^\Z:\ x_i=a_i, \forall \ i\in I\}$$ and  $$\{x\in\Sigma_A^\Z:\ x_i\neq a_i, \forall \ i\in I\}$$ for any choice of finite $I\subset\Z$ and $\{a_i\}_{i\in I}\subset A$. On the other hand, the topology we considered in this work contains only sets of the first type.

The key features to note about the shift spaces we introduced here are that they are zero-dimensional, Hausdorff, compact and sequentially compact spaces. Furthermore, the shift map is continuous with respect to this topology (while it was not continuous for Ott-Tomforde-Willis one-sided shift spaces). These features were key for us to generalize work of Kitchens \cite{kitchens} to inverse semigroup shifts over infinite alphabets \cite{GSS1}. It remains open whether the locally compact space obtained by excluding the empty sequence is metrizable.

We also note that because our clopen basis keeps track of an infinite number of entries, the dynamical recurrence properties of the shift map which hold for finite-alphabet shift spaces and for Ott-Tomforde-Willis one-sided shift spaces do not hold in our case. Indeed, in those situations the full shift is topologically chaotic (the set of its periodic orbits is dense, and it is topologically transitive - see \cite{banks92}) while in our case the empty sequence in the full two-sided shift is an attractor for all nonperiodic orbits. However, we remark that this apparent simplicity of the global dynamics hides the complexity of the local behavior. This can be visualized by taking the projection of the two-sided shift on the positive coordinates, which will result in a dense subset of the Ott-Tomforde-Willis one-sided shift space (which is topologically chaotic).

Interestingly, the problem of finding general conditions to characterize the existence of an analogue of the Curtis-Hedlund-Lyndon Theorem remains open. In fact, the class of maps for which we could expect to find a complete characterization in terms of a  Curtis-Hedlund-Lyndon type theorem should be compatible with the topology considered, that is, it should contain all maps $\Phi:\Lambda\to\Lambda$ for which
for all $n\in\Z$ and $x\in\Lambda$ there exist $\ell\geq 0$ which does not depend on the value of $n$, but does depend on the configuration of $x$ on the coordinates $(\ldots x_{n-1}x_n \ldots x_{n+\ell})$ to determine $\bigl(\Phi(x)\bigr)_n$.

%=======================================================================================================================

%=======================================================================================================================

\section*{Acknowledgments}

\noindent D. Gon\c{c}alves was partially supported by Capes grant PVE085/2012 and CNPq.

\noindent
M. Sobottka was supported by CNPq-Brazil grants 304813/2012-5, 480314/2013-6 and 308575/2015-6.. Part of this work was
carried out while the author was postdoctoral fellow of CAPES-Brazil at Center for Mathematical Modeling, University of Chile.

\noindent C. Starling was supported by CNPq, and work on this paper occurred while the author held a postdoctoral fellowship at UFSC.

%====================================================== BIBLIOGRAFIA =================================================================

\end{document}